\newcommand\blfootnote[1]{%
	\begingroup
	\renewcommand\thefootnote{}\footnote{#1}%
	\addtocounter{footnote}{-1}%
	\endgroup}
\theoremstyle{plain}
\newtheorem{theorem}{Theorem}[section]
\newtheorem{proposition}[theorem]{Proposition}
\newtheorem{lemma}[theorem]{Lemma}
\newtheorem{corollary}[theorem]{Corollary}
\newtheorem{fact}{Fact}[subsection]
\theoremstyle{definition}
\newtheorem{definition}[theorem]{Definition}
\newtheorem{remark}[theorem]{Remark}
\newtheorem{question}[theorem]{Question}
\newcommand{\NN}{\mathbb{N}}
\newcommand{\RR}{\mathbb{R}}
\newcommand{\CC}{\mathbb{C}}
\newcommand{\TT}{\mathbb{T}}
\newcommand{\Bi}{\mathscr{B}}
\newcommand{\Ci}{\mathscr{C}}
\newcommand{\Del}{\Delta}
\newcommand{\Ec}{\mathcal{E}}
\newcommand{\Fc}{\mathcal{F}}
\newcommand{\IP}{\mathcal{IP}}
\newcommand{\Kc}{\mathcal{K}}
\newcommand{\Lc}{\mathcal{L}}
\newcommand{\mf}{\mathfrak{m}}
\newcommand{\Part}{\mathscr{P}}
\newcommand{\Sc}{\mathcal{S}}
\newcommand{\Uc}{\mathcal{U}}
\newcommand{\Per}{\operatorname{Per}}
\newcommand{\URec}{\operatorname{URec}}
\newcommand{\FRec}{\operatorname{FRec}}
\newcommand{\UFRec}{\operatorname{UFRec}}
\newcommand{\RRec}{\operatorname{RRec}}
\newcommand{\Rec}{\textup{Rec}}
\newcommand{\FHC}{\operatorname{FHC}}
\newcommand{\UFHC}{\operatorname{UFHC}}
\newcommand{\RHC}{\operatorname{RHC}}
\newcommand{\HC}{\textup{HC}}
\newcommand{\cl}{\overline}
\newcommand{\orb}{\operatorname{Orb}}
\newcommand{\lspan}{\operatorname{span}}
\newcommand{\res}{\arrowvert}
\newcommand{\eps}{\varepsilon}
\newcommand{\card}{\operatorname{card}}
\newcommand{\QEDh}{\pushQED{\qed}\qedhere} 
\newcommand{\Bdsup}{\overline{\operatorname{Bd}}}
\newcommand{\dsup}{\overline{\operatorname{dens}}}
\newcommand{\dinf}{\underline{\operatorname{dens}}}
\newcommand{\dens}{\operatorname{dens}}
\def\1{{\mathchoice {\rm 1\mskip-4mu l} {\rm 1\mskip-4mu l} {\rm 1\mskip-4.5mu l} {\rm 1\mskip-5mu l}}}
\newcommand{\supp}{\operatorname{supp}}
\newcommand{\symbf}{\boldsymbol}
\newcommand{\ep}[2]{\langle #1 , #2 \rangle}
\begin{document}
\begin{center}
	\begin{LARGE}
		{\bf Recurrence properties for linear dynamical systems: An approach via invariant measures}
	\end{LARGE}
\end{center}

\begin{center}
	\begin{Large}
		by
	\end{Large}
\end{center}

\begin{center}
	\begin{Large}
		Sophie Grivaux \& Antoni L\'opez-Mart\'inez\blfootnote{\textbf{Acknowledgments}: The first author was supported by the project FRONT of the French National Research Agency (grant ANR-17-CE40-0021) and by the Labex CEMPI (ANR-11-LABX-0007-01). The second author was partially supported by the Spanish Ministerio de Ciencia, Innovaci\'on y Universidades, grant FPU2019/04094, and partially by MCIN/AEI/10.13039/501100011033, Project PID2019-105011GB-I00.\\ \textbf{2020 Mathematics Subject Classification}: 47A16, 47A35, 37A05, 37B20.\\ \textbf{Keywords and phrases}: Linear dynamics, Recurrence, Invariant measures, Frequent recurrence,\newline Unimodular eigenvectors, Uniformly recurrent operators.\\ \textbf{Journal-ref}: Journal de Math\'ematiques Pures et Appliqu\'ees, Volume 169, January 2023, Pages 155-188, https://doi.org/10.1016/j.matpur.2022.11.011}
	\end{Large}
\end{center}


\begin{abstract}
	 We study different pointwise recurrence notions for linear dynamical systems from the Ergodic Theory point of view. We show that from any reiteratively recurrent vector $x_0$, for an adjoint operator $T$ on a separable dual Banach space $X$, one can construct a $T$-invariant probability measure which contains $x_0$ in its support. This allows us to establish some equivalences, for these operators, between some strong pointwise recurrence notions which in general are completely distinguished. In particular, we show that (in our framework) reiterative recurrence coincides with frequent recurrence; for complex Hilbert spaces uniform recurrence coincides with the property of having a spanning family of unimodular eigenvectors; and the same happens for power-bounded operators on complex reflexive Banach spaces. These (surprising) properties are easily generalized to product and inverse dynamical systems, which implies some relations with the respective hypercyclicity notions. Finally we study how typical is an operator with a non-zero reiteratively recurrent vector in the sense of Baire category.
\end{abstract}


\section{Introduction and main results}

\subsection{General background}\label{Subsec:1.1GB}

This paper focuses on some aspects of the interplay between topological and measurable dynamics for {\em linear dynamical systems}, our main aim being to investigate the links in this context between various notions of {\em recurrence}.\\[-5pt]

A ({\em real} or {\em complex}) {\em linear dynamical system} $(X,T)$ is given by the action of a bounded linear operator $T$ on a (real or complex) separable infinite-dimensional Banach space $X$, and we will denote by $\Lc(X)$ the {\em set of bounded linear operators} acting on such a space $X$. A linear dynamical system is a particular case of a {\em Polish dynamical system} (i.e. a system given by the action of a continuous map on a separable completely metrizable space), and some of the results obtained in the paper hold in this more general context. Given a dynamical system $T:X\rightarrow X$ and a point $x \in X$ we will denote by
\[
\orb(x,T) := \{ T^nx : n \in \NN_0 \},
\]
the {\em $T$-orbit of $x$}, where $\NN_0:=\NN\cup\{0\}$. Examples of topological properties, for linear dynamical systems, which will be of interest to us in this work are:
\begin{enumerate}[(a)]
	\item \textit{recurrence}: the operator $T$ is said to be {\em recurrent} if the set
	\[
	\Rec(T) := \left\{ x \in X : x \in \cl{\orb(Tx,T)} \right\},
	\]
	is dense in $X$, where each vector $x \in \Rec(T)$ is called a {\em recurrent vector} for $T$. By the (not so well-known) Costakis-Manoussos-Parissis' theorem (see \cite[Proposition 2.1]{CoMaPa2014}), this notion is equivalent to that of {\em topological recurrence}, i.e. for each non-empty open subset $U$ of $X$ one can find $n \in \NN$ such that $T^n(U) \cap U \neq \varnothing$; and in this case, the set $\Rec(T)$ of recurrent vectors for $T$ is a dense $G_{\delta}$ subset of $X$;
	
	\item \textit{hypercyclicity}: the operator $T$ is said to be {\em hypercyclic} if there exists a vector $x \in X$, called a {\em hypercyclic vector} for $T$, whose orbit $\orb(x,T)$ is dense in $X$. By the Birkhoff's Transitivity theorem (see \cite[Theorem 1.16]{GrPe2011}), this notion is equivalent to that of {\em topological transitivity}, i.e. for each pair $U,V$ of non-empty open subsets of $X$ one can find $n \in \NN_0$ such that $T^n(U) \cap V \neq \varnothing$; and in this case, the set of hypercyclic vectors for $T$, denoted by $\HC(T)$, is a dense $G_{\delta}$ subset of $X$.
\end{enumerate}
If given a point $x \in X$ and a set $A \subset X$ we denote the {\em return set from $x$ to $A$} as
\[
N_T(x,A) := \{ n\in\NN_0 : T^nx \in A \},
\]
which will be denoted by $N(x,A)$ if no confusion with the map arises, we can reformulate the above notions in the following terms: a vector $x \in X$ is recurrent if and only if $N(x,U)$ is an infinite set for every neighbourhood $U$ of $x$; and a vector $x \in X$ is hypercyclic if and only if $N(x,U)$ is an infinite set for every non-empty open subset $U$ of $X$. Historically, hypercyclicity and its generalizations have been the most studied notions in linear dynamics while the systematic study of the linear dynamical recurrence-kind properties started recently in 2014 with \cite{CoMaPa2014}, in spite of the great non-linear dynamical knowledge already existing in this area (see for instance \cite{Furstenberg1981}).\\[-5pt]

Direct relations between these properties and Ergodic Theory arise when we are able to consider a {\em probability} (or a {\em positive finite}) {\em Borel measure} $\mu$ on $X$ (i.e. defined on $\Bi(X)$, the {\em $\sigma$-algebra of Borel sets} of $X$), which will sometimes be required to have {\em full support} (i.e. $\mu(U)>0$ for every non-empty open subset $U$ of $X$). We will only consider Borel measures in this work, and the word ``Borel'' will sometimes be omitted. If such a measure $\mu$ exists, we can study the dynamical system $(X,\Bi(X),\mu,T)$ from the point of view of Ergodic Theory and relevant properties are:
\begin{enumerate}[(a)]
	\item \textit{invariance}: the operator $T$ is said to be {\em $\mu$-invariant}, or equivalently, the measure $\mu$ is called {\em $T$-invariant}, if for each $A \in \Bi(X)$ the equality $\mu(T^{-1}(A))=\mu(A)$ holds. By the Poincar\'e's Recurrence theorem (see \cite[Theorem 1.4]{Walters1982}), this notion implies that for every $A \in \Bi(X)$ with $\mu(A)>0$ there is $n \in \NN$ such that $T^n(A) \cap A \neq \varnothing$. The {\em Dirac mass} $\delta_0$ at $0$ is always an invariant measure for any operator $T$, and we will say that a $T$-invariant probability measure $\mu$ is {\em non-trivial} if it is different from $\delta_0$.
	
	\item \textit{ergodicity}: the operator $T$ is said to be {\em ergodic} with respect to $\mu$, provided that the measure $\mu$ is $T$-invariant, and for each $A \in \Bi(X)$ with $T^{-1}(A) = A$ we have that $\mu(A) \in \{0,1\}$. It is well known that the last statement is equivalent to the fact that, for each pair of sets $A,B \in \Bi(X)$ with $\mu(A),\mu(B)>0$ there is $n \in \NN_0$ such that $\mu\big(T^{-n}(A) \cap B\big)>0$ (see~\cite[Theorem~1.5]{Walters1982}).
\end{enumerate}
When $T$ is ergodic with respect to a measure with full support, it follows from the Birkhoff's Pointwise Ergodic theorem that $T$ is not only hypercyclic, but even {\em frequently hypercyclic}: there exists a vector $x \in X$ such that for each non-empty open subset $U$ of $X$ the return set $N(x,U)$ has {\em positive lower density}; in other words:
\[
\dinf(N(x,U)) = \liminf_{N \to \infty} \dfrac{\card(N(x,U)\cap [0,N])}{N+1} > 0.
\]
Such a vector $x$ is said to be a {\em frequently hypercyclic vector} for $T$, and the set of all frequently hypercyclic vectors is denoted by $\FHC(T)$. See \cite[Corollary 5.5]{BaMa2009} for the details of this argument, and for more on frequent hypercyclicity.\\[-5pt]

When $T$ is only supposed to admit an invariant measure $\mu$, it follows easily from the Poincar\'e's Recurrence theorem that $\mu$-almost every $x \in X$ is a recurrent point for $T$ (see~\cite[Theorem~3.3]{Furstenberg1981}). Our main line of thought in this work will be to connect various (stronger) notions of recurrence via invariant measures, proceeding essentially in two steps:
\begin{enumerate}[-]
	\item if $T$ admits vectors with a certain (rather weak) recurrence property, prove that it admits a non-trivial invariant measure, perhaps with full support (see Theorem~\ref{The:Main});
	
	\item if $T$ admits a non-trivial invariant measure (perhaps with full support), prove that it admits vectors with a certain strong recurrence property (see Lemmas~\ref{Lem:Sophie} and \ref{Lem:Eigenvectors}).
\end{enumerate}
This approach in the context of linear dynamical systems comes from the paper \cite{GriMa2014}, which extends to the linear setting some well-known results in the context of {\em compact dynamical systems} (see \cite[Chapter 3 and Lemma 3.17]{Furstenberg1981}). The various recurrence notions which we will consider were introduced and studied in the work \cite{BoGrLoPe2020}, but the initial study of recurrence in linear dynamics started in \cite{CoMaPa2014}. In the next subsection, we recall the relevant definitions and present the first main result of this paper.

\subsection{Furstenberg families: recurrence and hypercyclicity notions}\label{Subsec:1.2FFRH}

The Banach spaces $X$ considered in this subsection can be either real or complex. Let us first recall the following definitions from \cite{BoGrLoPe2020}:

\begin{definition}
	Given a non-empty collection of sets $\Fc \subset \mathscr{P}(\NN_0)$ we say that it is a {\em Furstenberg family} if for each $A \in \Fc$ we have
	\begin{enumerate}[(i)]
		\item $A$ is infinite;
		
		\item if $A \subset B \subset \NN_0$ then $B \in \Fc$.
	\end{enumerate}
	The {\em dual family} of $\Fc$ is defined as the collection of sets
	\[
	\Fc^* := \{ A \subset \NN_0 \text{ infinite} : A \cap B\neq\varnothing \text{ for all } B \in \Fc \}.
	\]
\end{definition}

\begin{definition}\label{Def:F-rec-hyp}
	Let $(X,T)$ be a linear dynamical system and let $\Fc$ be a Furstenberg family. A point $x \in X$ is said to be {\em $\Fc$-recurrent} (resp. {\em $\Fc$-hypercyclic}) if $N(x,U) \in \Fc$ for every neighbourhood $U$ of $x$ (resp. for every non-empty open subset $U$ of $X$). We will denote by $\Fc\Rec(T)$ (resp. $\Fc\HC(T)$) the set of such points and we say that $T$ is {\em $\Fc$-recurrent} (resp. {\em $\Fc$-hypercyclic}) if $\Fc\Rec(T)$ is dense in $X$ (resp. if $\Fc\HC(T) \neq \varnothing$).
\end{definition}

The families $\Fc$ for which there exist $\Fc$-hypercyclic operators are by far less common than those for which $\Fc$-recurrence exists since having an orbit distributed around the whole space is much more complicated than having it just around the initial point of the orbit. Furstenberg families associated just to recurrence will be used in the following subsection, but in the present one we focus on the most known cases of families for which both notions exist. In particular, a point $x \in X$ is said to be 
\begin{enumerate}[(a)]
	\item {\em frequently recurrent} (resp. {\em frequently hypercyclic}) if $\dinf(N(x,U))>0$ for every neighbourhood $U$ of $x$ (resp. for every non-empty open subset $U$ of $X$). We will denote by $\FRec(T)$ (resp. $\FHC(T)$) the set of such points, and we say that $T$ is {\em frequently recurrent} (resp. {\em frequently hypercyclic}) if $\FRec(T)$ is dense in $X$ (resp. if $\FHC(T) \neq \varnothing$);
	
	\item {\em $\Uc$-frequently recurrent} (resp. {\em $\Uc$-frequently hypercyclic}) if $\dsup(N(x,U))>0$ for every neighbourhood $U$ of $x$ (resp. for every non-empty open subset $U$ of $X$). We will denote by $\UFRec(T)$ (resp. $\UFHC(T)$) the set of such points, and we say that $T$ is {\em $\Uc$-frequently recurrent} (resp. {\em $\Uc$-frequently hypercyclic}) if $\UFRec(T)$ is dense in $X$ (resp. if $\UFHC(T) \neq \varnothing$);
	
	\item {\em reiteratively recurrent} (resp. {\em reiteratively hypercyclic}) if $\Bdsup(N(x,U))>0$ for every neighbourhood $U$ of $x$ (resp. for every non-empty open subset $U$ of $X$). We will denote by $\RRec(T)$ (resp. $\RHC(T)$) the set of such points and we say that $T$ is {\em reiteratively recurrent} (resp. {\em reiteratively hypercyclic}) if $\RRec(T)$ is dense in $X$ (resp. if $\RHC(T) \neq \varnothing$);
\end{enumerate}
where for any $A \subset \NN_0$ its:
\begin{enumerate}[(a)]
	\item {\em lower density} is $\displaystyle\dinf(A) := \liminf_{N \to \infty} \dfrac{\card(A\cap [0,N])}{N+1}$;
	
	\item {\em upper density} is $\displaystyle\dsup(A) := \limsup_{N \to \infty} \dfrac{\card(A\cap [0,N])}{N+1}$;
	
	\item {\em upper Banach density} is $\displaystyle\Bdsup(A) := \limsup_{N \to \infty} \left( \max_{m \geq 0} \dfrac{\card(A\cap [m,m+N])}{N+1} \right)$.
\end{enumerate}
The introduced notions follow Definition \ref{Def:F-rec-hyp} applied to the respective families of {\em positive} ({\em lower}, {\em upper} and {\em upper Banach}) {\em density sets}, and in fact, the inequalities between the respective densities imply the inclusions $\FRec(T) \subset \UFRec(T) \subset \RRec(T) \subset \Rec(T)$ and $\FHC(T) \subset \UFHC(T) \subset \RHC(T) \subset \HC(T)$. In particular, frequent, $\Uc$-frequent and reiterative recurrence are clearly stronger notions than ``usual'' recurrence as defined in Subsection \ref{Subsec:1.1GB}, and frequent recurrence is a stronger notion than $\Uc$-frequent recurrence, which is in its turn stronger than reiterative recurrence.\\[-5pt]

We point out that all these notions are not specific to the linear setting; we will actually use them in the context of Polish dynamical systems in Sections \ref{Sec:2IMfromRRec}, \ref{Sec:3FRecfromRRec}, \ref{Sec:5Product} and \ref{Sec:6Inverse}. However, since we are focused on linear dynamical systems, our first main result connects all of them in the framework of {\em adjoint operators on separable dual Banach spaces}:

\begin{theorem}\label{The:Banach}
	Let $T:X\rightarrow X$ be an adjoint operator on a (real or complex) separable dual Banach space $X$. Then we have the equality
	\[
	\cl{\FRec(T)} = \cl{\RRec(T)}.
	\]\newpage
	Moreover:
	\begin{enumerate}[{\em(a)}]
		\item The following statements are equivalent:
		\begin{enumerate}[{\em(i)}]
			\item $\FRec(T) \setminus\{0\} \neq \varnothing$;
			
			\item $\UFRec(T) \setminus\{0\} \neq \varnothing$;
			
			\item $\RRec(T) \setminus\{0\} \neq \varnothing$;
			
			\item $T$ admits a non-trivial invariant probability measure.
		\end{enumerate}
		
		\item The following statements are equivalent:
		\begin{enumerate}[{\em(i)}]
			\item $T$ is frequently recurrent;
			
			\item $T$ is $\Uc$-frequently recurrent;
			
			\item $T$ is reiteratively recurrent;
			
			\item $T$ admits an invariant probability measure with full support.
		\end{enumerate}
	\end{enumerate}
	In particular, these results hold whenever $T$ is an operator on a (real or complex) separable reflexive Banach space $X$.
\end{theorem}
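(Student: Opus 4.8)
The plan is to deduce all three assertions (the closure equality and the two chains of equivalences) from two building blocks used cyclically. The first is the measure-production result Theorem~\ref{The:Main}, which from any reiteratively recurrent vector $x_0$ produces a $T$-invariant probability measure $\mu$ with $x_0\in\supp(\mu)$. The second is a Birkhoff-type extraction result (the content of Lemma~\ref{Lem:Sophie}) asserting that for any $T$-invariant probability measure $\mu$ the set $\FRec(T)$ is $\mu$-conull, hence dense in $\supp(\mu)$. Since the inclusions $\FRec(T)\subset\UFRec(T)\subset\RRec(T)$ are automatic, half of each equivalence chain is immediate and the real work is to close the loops.

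For part (a) I would get (i)$\Rightarrow$(ii)$\Rightarrow$(iii) from the trivial inclusions, then (iii)$\Rightarrow$(iv) by applying Theorem~\ref{The:Main} to a vector $x_0\in\RRec(T)\setminus\{0\}$: the resulting invariant measure $\mu$ has $x_0\in\supp(\mu)$ with $x_0\neq0$, so $\supp(\mu)\neq\{0\}$ and $\mu\neq\delta_0$ is non-trivial. For (iv)$\Rightarrow$(i), given a non-trivial invariant $\mu$ the extraction result makes $\FRec(T)$ dense in $\supp(\mu)$; as $\mu\neq\delta_0$ the support contains a nonzero vector, and frequently recurrent vectors accumulating at it provide an element of $\FRec(T)\setminus\{0\}$.

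Part (b) repeats this pattern with ``dense in $X$'' and ``full support'' replacing ``nonzero'' and ``non-trivial''. The inclusions give (i)$\Rightarrow$(ii)$\Rightarrow$(iii). For (iii)$\Rightarrow$(iv) I would fix a countable dense subset $\{x_j\}_{j\geq1}$ of $\RRec(T)$, produce via Theorem~\ref{The:Main} invariant measures $\mu_j$ with $x_j\in\supp(\mu_j)$, and form $\mu:=\sum_{j\geq1}2^{-j}\mu_j$, which is $T$-invariant and satisfies $\supp(\mu)\supset\cl{\{x_j:j\geq1\}}=X$, hence has full support. For (iv)$\Rightarrow$(i), full support makes $\supp(\mu)=X$, so the $\mu$-conull set $\FRec(T)$ is dense in $X$ and $T$ is frequently recurrent. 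The closure equality is obtained the same way: $\cl{\FRec(T)}\subset\cl{\RRec(T)}$ is trivial, while for any $x_0\in\RRec(T)$ the measure $\mu$ from Theorem~\ref{The:Main} has $x_0\in\supp(\mu)$ with $\FRec(T)$ dense in $\supp(\mu)$, so $x_0\in\cl{\FRec(T)}$; taking closures gives $\cl{\RRec(T)}\subset\cl{\FRec(T)}$. The reflexive case is subsumed, since a separable reflexive $X$ is a dual space via $X\cong(X^*)^*$ and every $T\in\Lc(X)$ equals $(T^*)^*$, hence is an adjoint operator.

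The main obstacle is Theorem~\ref{The:Main}, where the hypothesis of being an adjoint operator on a dual space is genuinely used. The natural route is to average the point masses $\delta_{T^nx_0}$ over a block of times witnessing that $N(x_0,U)$ has positive upper Banach density, and to extract a limit in the weak$^*$ topology using the weak$^*$ compactness of bounded subsets of the dual space $X$; invariance of the limiting measure should follow from the weak$^*$-to-weak$^*$ continuity of the adjoint $T$, while the positivity of the upper Banach density is precisely what forces $x_0$ to survive in the support of the limit. The extraction result (Lemma~\ref{Lem:Sophie}) is comparatively routine: via the ergodic decomposition one reduces to ergodic $\mu$, applies Birkhoff's pointwise ergodic theorem to a countable base of balls $B_k$ with $\mu(B_k)>0$ to obtain $\dinf(N(x,B_k))=\mu(B_k)>0$ for $\mu$-a.e.\ $x$, and concludes that $\mu$-a.e.\ point of $\supp(\mu)$ is frequently recurrent.
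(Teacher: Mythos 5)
Your proposal is correct and follows essentially the same route as the paper: the paper proves Theorem~\ref{The:Banach} by checking that the weak-star topology satisfies hypotheses (I), (II), (III*) (weak*-continuity of adjoints and Alaoglu--Bourbaki), then applying Theorem~\ref{The:Main} to produce invariant measures from reiteratively recurrent vectors and Lemma~\ref{Lem:Sophie} (ergodic decomposition plus Birkhoff) to recover frequent recurrence, with exactly your trivial inclusions, your non-triviality argument for part (a), and your infinite convex combination $\sum_j 2^{-j}\mu_j$ over a countable dense set of $\RRec(T)$ for full support in part (b). One caveat on your side remark about proving Theorem~\ref{The:Main}: the naive Krylov--Bogolyubov averaging of the point masses $\delta_{T^n x_0}$ is more delicate than you suggest, since a reiteratively recurrent vector need not have bounded orbit, so tightness can fail and mass can escape to norm-infinity; the paper instead builds the measure via Banach limits and Riesz representation on $w^*$-compact sets, obtaining a possibly sub-probability invariant measure whose positive mass near $x_0$ is guaranteed precisely by the positive upper Banach density, and then normalizes --- but since you invoke Theorem~\ref{The:Main} as a cited result, this does not affect the correctness of your proof of Theorem~\ref{The:Banach}.
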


The above theorem is in spirit similar to \cite[Theorem~1.3]{GriMa2014}, where it is proved that every ($\Uc$-)frequently hypercyclic operator on a separable reflexive space admits an invariant measure with full support. It is observed in \cite[Remark~3.5]{GriMa2014} that the arguments extend to every adjoint operator acting on a separable dual Banach space. It is also proved in \cite[Proposition~2.11]{GriMa2014} that, in this same setting, operators admitting an invariant measure with full support are exactly those which are frequently recurrent. However, the notion of frequent recurrence introduced in \cite[Section~2.5]{GriMa2014} is rather different from the one given in Definition~\ref{Def:F-rec-hyp} since in \cite{GriMa2014}, $T \in \Lc(X)$ is called frequently recurrent if for every non-empty open subset $U$ of $X$ there exists a vector $x_U \in U$ for which just the positive lower density of the return set $N(x_U,U)$ is required. This notion is (at least formally) weaker than the one used here (see Remark~\ref{Rem:GriMa2014RRec}).\\[-5pt]

The proof of Theorem \ref{The:Banach} relies on some modifications of the arguments of \cite[Section~2]{GriMa2014}, which will be presented in Sections \ref{Sec:2IMfromRRec} and \ref{Sec:3FRecfromRRec}. We mention that it cannot be extended to all operators acting on separable (infinite-dimensional) Banach spaces. Indeed, it is shown in \cite[Theorem~5.7 and Corollary~5.8]{BoGrLoPe2020} that there even exist reiteratively hypercyclic operators on the space $c_0(\NN)$ which do not admit any non-zero $\Uc$-frequently recurrent vector.

\subsection{Uniform, $\IP^*$, $\Del^*$-recurrence and unimodular eigenvectors}\label{Subsec:1.3UIP*D*UEig}

In this subsection the underlying Banach spaces $X$ are assumed to be complex. A vector $x \in X$ is a {\em unimodular eigenvector} for $T$ provided $x \neq 0$ and $Tx=\lambda x$ for some unimodular complex number $\lambda \in \TT = \{ z \in \CC : |z|=1 \}$. The set of unimodular eigenvectors of $T$ will be denoted by $\Ec(T)$, and
\[
\Ec(T) = \bigcup_{\lambda \in \TT} \ker(\lambda-T) \setminus\{0\}.
\]
Unimodular eigenvectors are clearly frequently recurrent vectors for $T$, but they enjoy some stronger recurrence properties like uniform, $\IP^*$ and even $\Del^*$-recurrence (see Definition~\ref{Def:unif_IP*_Del*-rec} below). Our general aim in this paper is to investigate some contexts in which these strong forms of recurrence actually imply the existence of unimodular eigenvectors. We will see that it is indeed the case in (at least) the following two situations:
\begin{enumerate}[-]
	\item when $T$ is an operator on a complex Hilbert space (see Theorem~\ref{The:Hilbert} below);
	
	\item when $T$ is a power-bounded operator on a complex reflexive Banach space (Theorem~\ref{The:JaDeGli+URec}).
\end{enumerate}

Let us now introduce these stronger recurrence notions which are defined by considering Furstenberg families only relevant for the notion of recurrence, and, contrary to those used in Subsection~\ref{Subsec:1.2FFRH}, having no hypercyclicity analogue.

\begin{definition}
	Let $A \subset \NN_0$. We say that $A$ is a
	\begin{enumerate}[(a)]		
		\item {\em syndetic} set, if there is $m \in \NN$ such that for every $x \in \NN_0$ we have $[x,x+m]\cap A\neq\varnothing$. We will denote by
		\[
		\Sc := \{ A \subset \NN_0 : A \text{ is syndetic} \},
		\]
		the {\em Furstenberg family of syndetic sets}.
		
		\item {\em IP-set}, if there is an increasing sequence $(x_n)_{n=1}^{\infty} \in \NN_0^{\NN}$ such that
		\[
		\left\{ \sum_{n \in F} x_n : F \subset \NN \text{ finite} \right\} \subset A.
		\]
		We will denote by
		\[
		\IP := \{ A \subset \NN_0 : A \text{ is an IP-set} \},
		\]
		the {\em Furstenberg family of IP-sets}.
		
		\item {\em $\Del$-set}, if there is an infinite set $B \subset \NN_0$ such that $(B-B)\cap\NN\subset A$. We will denote by
		\[
		\Del := \{ A \subset \NN_0 : A \text{ is a $\Del$-set} \},
		\]
		the {\em Furstenberg family of $\Delta$-sets}.
	\end{enumerate}
\end{definition}

From Definition \ref{Def:F-rec-hyp} and the dual families notation we have:

\begin{definition}\label{Def:unif_IP*_Del*-rec}
	Let $(X,T)$ be a linear dynamical system. A point $x \in X$ is said to be 
	\begin{enumerate}[(a)]
		\item {\em uniformly recurrent} if $N(x,U) \in \Sc$ for every neighbourhood $U$ of $x$. We will denote by $\URec(T)$ the set of such points and $T$ is {\em uniformly recurrent} if $\URec(T)$ is dense in $X$;
		
		\item {\em $\IP^*$-recurrent} if $N(x,U) \in \IP^*$ for every neighbourhood $U$ of $x$. We will denote by $\IP^*\Rec(T)$ the set of such points, and $T$ is {\em $\IP^*$-recurrent} if $\IP^*\Rec(T)$ is dense in $X$;
		
		\item {\em $\Del^*$-recurrent} if $N(x,U) \in \Del^*$ for every neighbourhood $U$ of $x$. We will denote by $\Del^*\Rec(T)$ the set of such points, and $T$ is {\em $\Del^*$-recurrent} if $\Del^*\Rec(T)$ is dense in $X$.
	\end{enumerate}
\end{definition}
It is shown in \cite[Proposition 2]{BMPP2016} that the above Furstenberg families do not admit a respective hypercyclicity counterpart. As in the previous subsection these recurrence notions could be defined for (non-linear) Polish dynamical systems, but since the eigenvectors will play a fundamental role in the connection between those concepts we will directly work with complex linear maps. The relation $\Del^* \subset \IP^* \subset \Sc$ between the families (see \cite{BerDown2008}), Proposition~\ref{Pro:unimodular} and \cite{BoGrLoPe2020} imply the inclusions
\[
\lspan(\Ec(T)) \subset \Del^*\Rec(T) \subset \IP^*\Rec(T) \subset \URec(T) \subset \FRec(T) \subset \UFRec(T) \subset \RRec(T).
\]
From there the following question was proposed in \cite{BoGrLoPe2020}:

\begin{question}[\textbf{\cite[Question 6.3]{BoGrLoPe2020}}]\label{Q:urecIP*rec}
	Does there exist an operator which is uniformly recurrent but not $\IP^*$-recurrent?
\end{question}

The uniformly recurrent operators considered in \cite{BoGrLoPe2020} were also $\IP^*$-recurrent, and in fact a partial negative answer to Question \ref{Q:urecIP*rec} was already given in \cite[Theorem 6.2]{BoGrLoPe2020} for the particular case of power-bounded operators, condition which implies the equality of the two sets $\IP^*\Rec(T)$ and $\URec(T)$. The second main result of this paper provides a negative answer to Question \ref{Q:urecIP*rec} for operators acting on a complex separable Hilbert space $H$, by showing the following stronger statement: {\em any uniformly recurrent operator $T \in \Lc(H)$ has a spanning set of unimodular eigenvectors}. More precisely, define the sets
\begin{eqnarray}
	\FRec^{bo}(T) &:=& \FRec(T) \cap \{ x \in H \textup{ with bounded $T$-orbit} \}; \nonumber\\[5pt]
	\UFRec^{bo}(T) &:=& \UFRec(T) \cap \{ x \in H \textup{ with bounded $T$-orbit} \}; \nonumber\\[5pt]
	\RRec^{bo}(T) &:=& \RRec(T) \cap \{ x \in H \textup{ with bounded $T$-orbit} \}. \nonumber
\end{eqnarray}
Since uniformly recurrent vectors have bounded orbit, we have $\URec(T) \subset \FRec^{bo}(T) \subset \UFRec^{bo}(T) \subset \RRec^{bo}(T)$ and hence:

\begin{theorem}\label{The:Hilbert}
	Let $T \in \Lc(H)$ where $H$ is a complex separable Hilbert space. Then we have the equalities
	\[
	\cl{\lspan(\Ec(T))} = \cl{\URec(T)} = \cl{\RRec^{bo}(T)}.
	\]
	Moreover:
	\begin{enumerate}[{\em(a)}]
		\item The following statements are equivalent:
		\begin{enumerate}[{\em(i)}]			
			\item $\Ec(T)\neq\varnothing$;
			
			\item $\Del^*\Rec(T) \setminus \{0\} \neq \varnothing$;
			
			\item $\IP^*\Rec(T) \setminus \{0\} \neq \varnothing$;
			
			\item $\URec(T) \setminus \{0\} \neq \varnothing$;
			
			\item $\FRec^{bo}(T) \setminus\{0\} \neq \varnothing$;
			
			\item $\UFRec^{bo}(T) \setminus\{0\} \neq \varnothing$;
			
			\item $\RRec^{bo}(T) \setminus\{0\} \neq \varnothing$;
			
			\item $T$ admits a non-trivial invariant probability measure $\mu$ with $\int_H \|z\|^2 d\mu(z) < \infty$.
		\end{enumerate}
		
		\item The following statements are equivalent:
		\begin{enumerate}[{\em(i)}]
			\item the set $\lspan(\Ec(T))$ is dense in $H$;
			
			\item $T$ is $\Del^*$-recurrent;
			
			\item $T$ is $\IP^*$-recurrent;
			
			\item $T$ is uniformly recurrent;
			
			\item the set $\FRec^{bo}(T)$ is dense in $H$;
			
			\item the set $\UFRec^{bo}(T)$ is dense in $H$;
			
			\item the set $\RRec^{bo}(T)$ is dense in $H$;
			
			\item $T$ admits an invariant probability measure $\mu$ with full support and $\int_H \|z\|^2 d\mu(z)~<~\infty$.
		\end{enumerate}
	\end{enumerate}
\end{theorem}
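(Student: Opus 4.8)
The plan is to reduce the whole statement to the single nontrivial inclusion
\[
\RRec^{bo}(T)\subseteq\cl{\lspan(\Ec(T))},
\]
everything else being bookkeeping on top of the chain of inclusions already recorded before the statement, namely $\lspan(\Ec(T))\subseteq\Del^*\Rec(T)\subseteq\IP^*\Rec(T)\subseteq\URec(T)\subseteq\FRec^{bo}(T)\subseteq\UFRec^{bo}(T)\subseteq\RRec^{bo}(T)$, together with Theorem~\ref{The:Banach}. Once the displayed inclusion is known, every set $S$ in the chain satisfies $\cl{\lspan(\Ec(T))}\subseteq\cl{S}\subseteq\cl{\RRec^{bo}(T)}\subseteq\cl{\lspan(\Ec(T))}$, so all these closures coincide; this is precisely the first assertion of the theorem, and it forces each $S$ to be nonzero (resp. dense) if and only if the common closed subspace $\cl{\lspan(\Ec(T))}$ is nonzero (resp. equal to $H$). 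Hence the equivalences (i)--(vii) in both (a) and (b) follow by ``sandwiching'', and only the measure-theoretic statements~(viii) remain to be connected.

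To establish the displayed inclusion I would run the two-step scheme announced in Subsection~\ref{Subsec:1.1GB}. Fix $x_0\in\RRec^{bo}(T)$. First, since $H$ is reflexive, $T$ is an adjoint operator, so Theorem~\ref{The:Main} produces a $T$-invariant Borel probability measure $\mu$ with $x_0\in\supp(\mu)$; as $\mu$ arises as a weak-$*$ limit of empirical measures along $\orb(x_0,T)$ and this orbit is bounded by hypothesis, $\supp(\mu)$ lies in a ball and in particular $\int_H\|z\|^2\,d\mu(z)<\infty$. Second, I would invoke the Hilbertian \emph{eigenvector-extraction} principle: \emph{any} $T$-invariant probability measure on a complex Hilbert space with finite second moment satisfies $\supp(\mu)\subseteq\cl{\lspan(\Ec(T))}$. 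Combining the two steps gives $x_0\in\supp(\mu)\subseteq\cl{\lspan(\Ec(T))}$, as wanted.

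The heart of the matter, and the step I expect to be the main obstacle, is this extraction principle. Here I would introduce the covariance operator $Q\in\Lc(H)$ defined by $\langle Qu,v\rangle=\int_H\langle u,z\rangle\langle z,v\rangle\,d\mu(z)$, which is positive with $\operatorname{tr}(Q)=\int_H\|z\|^2\,d\mu<\infty$, hence trace-class and compact; a change of variables using the invariance $\mu\circ T^{-1}=\mu$ yields the operator identity $Q=TQT^\ast$. Two consequences drive the argument: on the one hand $\supp(\mu)\subseteq\cl{\operatorname{ran}(Q)}$, since $\langle Qv,v\rangle=0$ forces $\mu$ to concentrate on $v^\perp$; on the other hand the quadratic form $q(u)=\langle Qu,u\rangle$ is $T^\ast$-invariant, $q(T^\ast u)=q(u)$, so $T^\ast$ descends to a linear isometry of the Hilbert space obtained by completing $H$ in the seminorm $q^{1/2}$, a space canonically identified with $K:=\cl{\operatorname{ran}(Q)}$ via $Q^{1/2}$. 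The genuinely delicate task is then to prove that this isometry is a \emph{unitary with pure point spectrum} (a Jacobs--de Leeuw--Glicksberg-type splitting), so that $K$ is spanned by eigenvectors; here the compactness of $Q$ and the bounded support are essential, and this is exactly what fails for a mixing measure, explaining why a finite-moment/bounded-orbit hypothesis cannot be dropped. From each unimodular eigenfunction $\phi$ of the Koopman operator one finally manufactures a true unimodular eigenvector of $T$ through the \emph{eigenvector-field integration} $v_\phi=\int_H\phi(z)\,z\,d\mu(z)$ (a direct computation using invariance and $U^{-1}\phi=\lambda\phi$ gives $Tv_\phi=\lambda v_\phi$), and one checks that these $v_\phi$ span $K$, yielding $\cl{\operatorname{ran}(Q)}\subseteq\cl{\lspan(\Ec(T))}$ and hence the principle.

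It remains to connect the measure statements (viii). In (a), the implication (vii)$\Rightarrow$(viii) is the first step above (the measure $\mu$ is nontrivial because $x_0\neq 0$ lies in its support), while (viii)$\Rightarrow$(i) is the extraction principle: a nontrivial finite-second-moment invariant measure has $\supp(\mu)\neq\{0\}$, so $\cl{\lspan(\Ec(T))}\neq\{0\}$ and $\Ec(T)\neq\varnothing$. In (b), to pass from density of $\RRec^{bo}(T)$ to a full-support finite-second-moment measure I would fix a countable dense set $(x_k)\subseteq\RRec^{bo}(T)$, attach to each $x_k$ a measure $\mu_k$ as above with $\int_H\|z\|^2\,d\mu_k=c_k<\infty$, and form $\mu=\sum_k\alpha_k\mu_k$ with $\alpha_k>0$, $\sum_k\alpha_k=1$ and $\sum_k\alpha_k c_k<\infty$; then $\mu$ is $T$-invariant with finite second moment and full support, since $\supp(\mu)\supseteq\cl{\{x_k:k\in\NN\}}=H$. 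The converse (viii)$\Rightarrow$(i) again uses extraction, now giving $H=\supp(\mu)\subseteq\cl{\lspan(\Ec(T))}$. This closes both chains of equivalences.
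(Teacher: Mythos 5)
Your global architecture coincides with the paper's: reduce everything to the inclusion $\RRec^{bo}(T)\subseteq\cl{\lspan(\Ec(T))}$; from each $x_0\in\RRec^{bo}(T)$ produce, via Theorem~\ref{The:Main} applied with the weak topology, a $T$-invariant probability measure with $x_0\in\supp(\mu)\subseteq\cl{\orb(x_0,T)}^{w}$, hence supported in a ball and with finite second-order moment (this support statement is part of Theorem~\ref{The:Main}; you do not need, and should not appeal to, how the measure ``arises as a weak-$*$ limit of empirical measures''); then invoke an extraction principle asserting that any $T$-invariant probability measure with finite second moment satisfies $\supp(\mu)\subseteq\cl{\lspan(\Ec(T))}$; finally do the sandwiching for (i)--(vii) and the weighted-sum construction for (viii), both of which match the paper. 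The extraction principle is exactly the paper's Lemma~\ref{Lem:Eigenvectors}, and this is where your proposal has a genuine gap: you do not prove it, and the route you sketch for it cannot be made to work.

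Your plan is to show that the isometry $V$ induced by $T^*$ on $K=\cl{\operatorname{ran}(Q)}$ via $Q^{1/2}$ (using $Q=TQT^*$) is a unitary with pure point spectrum, and to manufacture eigenvectors of $T$ from unimodular eigenfunctions $\phi$ of the Koopman operator by $v_\phi=\int_H\phi(z)z\,d\mu(z)$, claiming these span $K$. Both claims are false in general under your hypotheses. Indeed, $V$ is unitarily conjugate to the restriction of the Koopman operator of $(H,\Bi(H),\mu,T)$ to the closed subspace of $L^2(\mu)$ generated by the functions $z\mapsto\ep{u}{z}$, $u\in H$. Now take Rolewicz's operator $T=2B$ on $\ell^2(\NN)$ and let $\mu$ be its weakly mixing invariant Gaussian measure with full support: it has finite second moment, so it is an admissible input for your principle, but weak mixing means the Koopman operator has no non-constant unimodular eigenfunctions, so every $v_\phi$ equals $0$ (the measure is centered), and $V$ has no unimodular eigenvectors at all --- yet the conclusion of the lemma holds, since $\Ec(2B)$ spans $\ell^2(\NN)$. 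The structural reason is that the transfer $T Q^{1/2}=Q^{1/2}V^*$ only detects eigenvectors of $T$ lying in $\operatorname{ran}(Q^{1/2})$, and $\Ec(T)$ can lie entirely in $\cl{\operatorname{ran}(Q^{1/2})}\setminus\operatorname{ran}(Q^{1/2})$. Your parenthetical remark that ``this is exactly what fails for a mixing measure'' concedes the difficulty but draws the wrong conclusion: mixing measures are not excluded by a finite-moment (or even a bounded-support) hypothesis, so your argument breaks precisely on inputs the principle must handle; note also the internal inconsistency of stating the principle for finite second moment while invoking ``bounded support'' in its proof. The paper escapes this trap by a genuinely different mechanism, which is why its proof is ``specific to the Hilbertian setting'': it replaces $\mu$ by a \emph{Gaussian} measure $m$ with the same covariance operator (\cite[Corollary 5.15]{BaMa2009}), checks that $m$ is $T$-invariant (\cite[Proposition 5.22]{BaMa2009}) and that $\supp(m)=\cl{\lspan(\supp(\mu))}$, and then cites \cite[Theorem 5.46]{BaMa2009} for cotype-2 spaces, whose proof extracts unimodular eigenvectors by Gaussian-specific arguments rather than from Koopman eigenfunctions of $\mu$. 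Without this Gaussian detour (or an equivalent substitute), the central inclusion of your proof is missing its key step.
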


The proof of Theorem \ref{The:Hilbert} is really specific to the Hilbertian setting in a somewhat roundabout way. It relies on the following three main arguments:
\begin{enumerate}[-]
	\item the existence of a non-trivial invariant measure with a {\em finite second-order moment}, under the assumption of the existence of a reiteratively recurrent vector with {\em bounded orbit}; this argument is the same as the one employed in the proof of Theorem \ref{The:Banach} above;
	
	\item the fact that any operator on a space of type 2, admitting an invariant measure with a finite second-order moment, admits in fact a Gaussian invariant measure whose support contains that of the initial measure (see Remark \ref{Rem:co-type2BlackBox});
	
	\item and lastly, the fact that on spaces of cotype 2, the existence of a Gaussian invariant measure for an operator $T$ implies that the unimodular eigenvectors of $T$ span a dense subspace of the support of the measure (see {\em Step} 3 of Lemma \ref{Lem:Eigenvectors}).
\end{enumerate}
These last two ``facts'' are far from being trivial, and we refer the reader to \cite[Chapter~5]{BaMa2009} for a proof, as well as for an introduction to the role of Gaussian measures in linear dynamics. Since the only spaces which are both of type 2 and of cotype 2 are those which are isomorphic to a Hilbert space, our proof of Theorem \ref{The:Hilbert} does not seem to admit any possible extension to a non-Hilbertian setting. The following question remains widely open:

\begin{question}\label{Q:urecEspan}
	Let $X$ be a complex Banach space and let $T:X\rightarrow X$ be a uniformly recurrent operator. Is $\lspan(\Ec(T))$ a dense set in $X$? What about the cases where $T$ is an adjoint operator on a separable dual Banach space or where $X$ is a reflexive Banach space?
\end{question}

A partial (but not completely satisfactory) answer is our third and last main result, which only concerns the {\em power-bounded} operators on complex reflexive Banach spaces. It extends \cite[Theorem~6.2]{BoGrLoPe2020} by showing that such an operator $T \in \Lc(X)$ is again uniformly recurrent if and only if has a spanning set of unimodular eigenvectors. More precisely, we have:

\begin{theorem}\label{The:JaDeGli+URec}
	Let $T:X\rightarrow X$ be a power-bounded operator on a complex reflexive Banach space $X$. Then we have the equality
	\[
	\cl{\lspan(\Ec(T))} = \cl{\URec(T)}.
	\]
	In particular:
	\begin{enumerate}[{\em(a)}]
		\item The following statements are equivalent:
		\begin{enumerate}[{\em(i)}]
			\item $\Ec(T)\neq\varnothing$;
			
			\item $\Del^*\Rec(T) \setminus \{0\} \neq \varnothing$;
			
			\item $\IP^*\Rec(T) \setminus \{0\} \neq \varnothing$;
			
			\item $\URec(T) \setminus \{0\} \neq \varnothing$.
		\end{enumerate}
		
		\item The following statements are equivalent:
		\begin{enumerate}[{\em(i)}]
			\item the set $\lspan(\Ec(T))$ is dense in $X$;
			
			\item $T$ is $\Del^*$-recurrent;
			
			\item $T$ is $\IP^*$-recurrent;
			
			\item $T$ is uniformly recurrent.
		\end{enumerate}
	\end{enumerate}
\end{theorem}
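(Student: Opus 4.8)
The inclusion $\cl{\lspan(\Ec(T))} \subset \cl{\URec(T)}$ comes for free: taking closures in the chain $\lspan(\Ec(T)) \subset \Del^*\Rec(T) \subset \IP^*\Rec(T) \subset \URec(T)$ recalled above gives it at once. Thus the entire content of the asserted equality is the reverse inclusion, and since $\cl{\lspan(\Ec(T))}$ is a closed subspace it suffices to prove $\URec(T) \subset \cl{\lspan(\Ec(T))}$. I would obtain this via the Jacobs--de Leeuw--Glicksberg decomposition, which is available precisely because $T$ is power-bounded on a reflexive space: orbits are then relatively weakly compact, so $S := \cl{\{T^n : n \in \NN_0\}}^{\mathrm{WOT}}$ is a compact abelian semitopological semigroup. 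Letting $Q$ be the minimal idempotent of $S$ (the unit of its kernel group), $Q$ is a bounded projection commuting with $T$, and the classical theory provides the $T$-invariant splitting $X = X_r \oplus X_f$, where $X_r := QX = \cl{\lspan(\Ec(T))}$ is the reversible part and $X_f := \ker Q$ is the flight part, characterized by the fact that $0$ lies in the weak closure of $\orb(y,T)$ for each $y \in X_f$. I would cite this decomposition rather than reprove it.

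The whole problem then reduces to one claim: \emph{the only uniformly recurrent vector in $X_f$ is $0$}. Granting it, take any $x \in \URec(T)$ and split $x = Qx + (I-Q)x$. Since $P := I-Q$ is continuous and commutes with $T$, for every neighbourhood $V$ of $Px$ there is a neighbourhood $U$ of $x$ with $P(U) \subset V$; if $n \in N(x,U)$ then $T^n(Px) = P(T^n x) \in V$, so $N(x,U) \subset N(Px,V)$. As $N(x,U)$ is syndetic and supersets of syndetic sets are syndetic, $Px \in X_f$ is itself uniformly recurrent, hence $Px = 0$ by the claim, and $x = Qx \in X_r = \cl{\lspan(\Ec(T))}$, which is exactly what we want.

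To prove the claim, suppose $y \in X_f$ is uniformly recurrent with $y \neq 0$, and set $U := \{z \in X : \|z-y\| < \|y\|/2\}$. Uniform recurrence yields $m \in \NN$ with $[n,n+m]\cap N(y,U) \neq \varnothing$ for every $n$. On the other hand $Q \in S$ is a WOT-limit of a net of powers of $T$; since $X$ is separable and reflexive, the weak topology is metrizable on bounded sets, so I can extract a sequence $n_k \to \infty$ with $T^{n_k}y \to Qy = 0$ weakly. For each $k$ choose $j_k \in [n_k,n_k+m]\cap N(y,U)$, write $j_k = n_k + r_k$ with $r_k \in \{0,\dots,m\}$, and pass to a sub-subsequence on which $r_k$ is constantly equal to some $r$. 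Then $T^{j_k}y = T^r(T^{n_k}y) \to 0$ weakly by weak continuity of $T^r$, while every $T^{j_k}y$ lies in the convex, norm-closed (hence weakly closed, by Mazur) ball $\{z : \|z-y\| \le \|y\|/2\}$, which does not contain $0$. This contradiction proves the claim, completing the equality $\cl{\lspan(\Ec(T))} = \cl{\URec(T)}$.

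The equivalences in (a) and (b) are then routine bookkeeping. For (a), the inclusion chain gives (i)$\Rightarrow$(ii)$\Rightarrow$(iii)$\Rightarrow$(iv) since a nonzero unimodular eigenvector already lies in $\Del^*\Rec(T)$; and (iv)$\Rightarrow$(i) because a nonzero $x \in \URec(T)$ forces $\cl{\lspan(\Ec(T))} = \cl{\URec(T)} \ni x \neq 0$, so $\Ec(T) \neq \varnothing$. For (b), density of $\lspan(\Ec(T))$ propagates through the chain to density of $\URec(T)$, and conversely density of $\URec(T)$ gives $\cl{\lspan(\Ec(T))} = \cl{\URec(T)} = X$ by the main equality. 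The step I expect to be the genuine obstacle is not the recurrence contradiction itself (which is soft once set up) but pinning down the exact form of the Jacobs--de Leeuw--Glicksberg splitting needed here --- namely that $X_r$ coincides with $\cl{\lspan(\Ec(T))}$ and that $X_f = \ker Q$ carries the stated weak-limit-zero property --- together with the net-to-sequence reduction that legitimizes extracting $(n_k)$.
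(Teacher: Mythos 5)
Your proof is correct, and it rests on the same pillar as the paper's: the Jacobs-Deleeuw-Glicksberg splitting $X=X_{rev}\oplus X_{fl}$ for the weakly almost periodic abelian semigroup $\{T^n : n\in\NN_0\}$, together with the identification $X_{rev}=\cl{\lspan(\Ec(T))}$ from Krengel's book. The difference lies in how uniformly recurrent vectors get placed inside the reversible part. The paper never introduces the projection $Q$: it works with the characterization $X_{rev}=\{x : y\in\cl{\orb(x,T)}^{w}\Rightarrow x\in\cl{\orb(y,T)}^{w}\}$, regards $\cl{\orb(x,T)}^{w}$ as a compact dynamical system in the weak topology, and invokes Furstenberg's theorem that a uniformly recurrent point of a compact system has minimal orbit closure; minimality makes every orbit in $\cl{\orb(x,T)}^{w}$ dense, hence $x\in X_{rev}$. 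You instead use the complementary characterization $X_{fl}=\ker Q=\{y : 0\in\cl{\orb(y,T)}^{w}\}$, make the clean reduction (not needed in the paper's route) that $(I-Q)x$ is uniformly recurrent whenever $x$ is, and then kill nonzero uniformly recurrent flight vectors by hand: syndeticity of return times, a weakly null subsequence of the orbit, pigeonhole on the offsets $r_k\le m$, weak-weak continuity of $T^r$, and Mazur's theorem. Your route trades Furstenberg's minimality theorem for an elementary weak-compactness argument, at the mild cost of a net-to-sequence extraction; that step is legitimate here because a separable reflexive space has separable dual, so bounded sets are weakly metrizable (and Eberlein-\v{S}mulian would give the sequence without any separability). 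Two small points to tidy, neither affecting correctness: the commutation $TQ=QT$ deserves its one-line verification (left and right multiplication by a fixed operator are WOT-continuous, so $TQ$ and $QT$ are both WOT-limits of $T^{n_\alpha+1}$), and you should either justify that the extracted sequence satisfies $n_k\to\infty$ or observe that your contradiction goes through verbatim if some $T^{n}y=0$ --- which in any case is impossible for a nonzero recurrent vector $y$.
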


The proof of Theorem \ref{The:JaDeGli+URec} relies on the splitting theorem of Jacobs-Deleeuw-Glicksberg (see \cite[Section 2.4]{Krengel1985}). Here the unimodular eigenvectors are obtained in a very different way than in the proof of Theorem \ref{The:Hilbert} (via characters on a certain compact abelian group).\\[-5pt]

Even though the arguments used in the proofs of the two theorems above still hold for complex finite-dimensional spaces, in this situation one can use directly the canonical Jordan decomposition (see \cite[Theorem 4.1]{CoMaPa2014} and \cite[Theorem 7.3]{BoGrLoPe2020}) to obtain a spanning set of unimodular eigenvectors even from ``usual'' recurrence as defined in Subsection \ref{Subsec:1.1GB}.

\subsection{Organization of the paper}

Section \ref{Sec:2IMfromRRec} is devoted to the statement and proof of a purely non-linear result (Theorem~\ref{The:Main}) which allows to construct invariant measures from reiteratively recurrent points for a rather general class of Polish dynamical systems (which includes the compact ones). Theorem~\ref{The:Main} is a modest improvement of \cite[Theorem 1.5, Remarks 2.6 and 2.12]{GriMa2014} and its proof is based on a modification of the construction given in \cite[Section 2]{GriMa2014}. In Section \ref{Sec:3FRecfromRRec}, we prove some results where frequent recurrence is deduced from reiterative recurrence, in particular Theorem~\ref{The:Banach}. Theorems \ref{The:Hilbert} and \ref{The:JaDeGli+URec}, which provide links between strong forms of recurrence and the existence of unimodular eigenvectors, are proved in Section \ref{Sec:4EigfromURec}. Sections~\ref{Sec:5Product} and \ref{Sec:6Inverse} present some applications of the above results in terms of product and inverse dynamical systems respectively, while we study in Section \ref{Sec:7Typical} the ``typicality'', in the Baire Category sense, of some recurrence properties. Lastly, we gather in Section \ref{Sec:8Open} some possibly interesting open questions and a few comments related to them.

\section{Invariant measures from reiterative recurrence}\label{Sec:2IMfromRRec}

In this section, we present a modification of the construction of \cite[Section 2]{GriMa2014} which allows to construct invariant measures from reiteratively recurrent points for a rather general class of Polish dynamical systems, including the compact ones (see Remark \ref{Rem:compact}).

\subsection{Topological assumptions and initial comments}

We begin this section with some notation: whenever we consider a space of functions we will use the symbol $\1$ to denote the function constantly equal to $1$, and given a subset $A$ of the domain of the functions, we will write $\1_A$ for the indicator function of $A$, i.e. $\1_A(x)=1$ if $x \in A$ and $\1_A(x)=0$ if $x \notin A$. For instance, if we consider $\ell^{\infty}=\ell^{\infty}(\NN)$, the {\em space of all bounded sequences of real numbers}, $\1 \in \ell^{\infty}$ is the sequence with all its terms equal to $1$, and for every $A \subset \NN$, $\1_A \in \ell^{\infty}$ will be the sequence in which the $n$-th coordinate is $1$ if $n \in A$ and $0$ otherwise.\\[-5pt]

A {\em Banach limit} is a continuous functional $\mf:\ell^{\infty}\rightarrow \RR$ such that for every pair of sequences $\phi=(\phi(n))_{n\geq1},\psi=(\psi(n))_{n\geq1} \in \ell^{\infty}$, every $\alpha,\beta \in \RR$ and every $a \in \NN$:
\begin{enumerate}[(a)]
	\item $\mf(\alpha\phi + \beta\psi) = \alpha\mf(\phi) + \beta\mf(\psi)$ (linearity);
	
	\item $\phi(n)\geq 0$ for every $n \in \NN$ implies $\mf(\phi)\geq 0$ (positivity);
	
	\item $\mf((\phi(n+a)_{n\geq1}) = \mf((\phi(n))_{n\geq1})$ (shift-invariance);
	
	\item if $\phi$ is a convergent sequence then $\mf(\phi)=\lim_{n\to\infty} \phi(n)$ (which implies $\mf(\1)=1$).
\end{enumerate}

Following \cite{GriMa2014}, each Banach limit $\mf$ should be viewed as a finitely additive measure on $\NN$. In fact we will write the result of the action of $\mf$ on a ``function'' $\phi \in \ell^{\infty}$ as the integral:
\[
\mf(\phi) = \int_{\NN} \phi(i) d\mf(i).
\]

Given a topological space $(X,\tau)$ we will denote by $\Bi(X,\tau)$ its {\em $\sigma$-algebra of Borel sets}. If there is no confusion with the topology we will simply write $\Bi(X)$. All the measures considered in this section will be {\em non-negative finite Borel measures}, i.e. they could be the null measure, and since they will be defined on Polish spaces the finiteness condition will imply their {\em regularity} (see \cite[Proposition 8.1.12]{Cohn2013}). Given a (non-negative) finite Borel measure $\mu$ on a topological space $(X,\tau)$ we will denote its {\em support} by
\[
\supp(\mu) := X \setminus \bigcup_{ \underset{\mu(U)=0}{U \in \tau}} U.
\]
When $\mu$ is positive and regular it is easy to show that $\supp(\mu)$ is non-empty, and the smallest $\tau$-closed subset of $X$ with full measure, i.e. $\mu(\supp(\mu))=\mu(X)$, the later being true even if $\mu$ is not regular but $X$ is second-countable (see \cite[Proposition 2.3]{Kozarzewski2018}). Moreover, a point $x$ belongs to $\supp(\mu)$ if and only if $\mu(U)>0$ for every neighbourhood $U$ of $x$.\\[-5pt]

Before presenting the ``measures' constructing machine'' that will be used in the rest of this work, we give name to some properties that a Polish dynamical system $(X,T)$ may have. In particular, let $(X,\tau_X)$ be the underlying Polish space, $\tau$ a Hausdorff topology in $X$ and let $\Kc_{\tau}$ be the set of $\tau$-compact subsets of $X$. The properties that we are going to consider are the following:
\begin{enumerate}[(I)]
	\item $T$ is a continuous self-map of $(X,\tau)$ (i.e. $T:X\rightarrow X$ is $\tau$-continuous);
	
	\item $\tau \subset \tau_X$ (i.e. $\tau$ is coarser than $\tau_X$);
	
	\item $\Bi(X,\tau) = \Bi(X,\tau_X)$ (i.e. both topologies have the same Borel sets);
	
	\item every $\tau$-compact set is $\tau$-metrizable (i.e. every $K \in \Kc_{\tau}$ is $\tau$-metrizable);
	
	\item[(III*)] every point of $X$ has a neighbourhood basis for $\tau_X$ consisting of $\tau$-compact sets.
\end{enumerate}
In \cite[Fact 2.1]{GriMa2014} it is shown easily how (II) and (III*) imply conditions (III) and (IV). For the concrete recurrence results that we obtain, it is necessary to assume conditions (I), (II) and (III*) in order to use the reiteratively recurrent points in a successful way. However, without property (III*) and assuming just conditions (I), (II), (III) and (IV) we can carry out the ``construction'' on which everything is based:

\begin{lemma}[\textbf{Modification of \cite[Remarks 2.6 and 2.12]{GriMa2014}}]\label{Lem:NewRemark2.6and2.12}
	Let $(X,T)$ be a Polish dynamical system. Assume that $X$ is endowed with a Hausdorff topology $\tau$ which fulfills {\em(I)}, {\em(II)}, {\em(III)} and {\em(IV)}. Then for each $x_0 \in X$ and each Banach limit $\mf:\ell^{\infty}\rightarrow \RR$ one can find a (non-negative) $T$-invariant finite Borel regular measure $\mu$ on $X$ for which $\mu(X)\leq 1$ and such that
	\[
	\mu(K) \geq \mf(\1_{N(x_0,K)}) \quad \text{ for every } K \in \Kc_{\tau}.
	\]
	Moreover, we have the inclusion
	\[
	\supp(\mu) \subset \cl{\orb(x_0,T)}^{\tau}.
	\]
\end{lemma}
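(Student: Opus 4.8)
The plan is to manufacture the measure from a single positive linear functional built out of the Banach limit, and then to realise that functional by an inner-regular Borel measure, using the metrizability of $\tau$-compact sets to compensate for the lack of global compactness. First I would define, for every bounded $\tau$-continuous function $f:X\rightarrow\RR$, the quantity
\[
L(f) := \mf\big( (f(T^n x_0))_{n \geq 0} \big) = \int_{\NN} f(T^i x_0)\, d\mf(i),
\]
which makes sense because $T$ is $\tau$-continuous by (I), so each $f\circ T^n$ is $\tau$-continuous, and the sequence $(f(T^nx_0))_n$ is bounded by $\|f\|_\infty$. Linearity and positivity of $\mf$ make $L$ a positive linear functional with $L(\1)=1$ and $0\leq L(f)\leq\|f\|_\infty$ whenever $f\geq 0$. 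The extra property I would extract, and the ultimate source of $T$-invariance, is that shift-invariance of $\mf$ gives $L(f\circ T)=L(f)$, since $(f(T^{n+1}x_0))_n$ is the shift of $(f(T^nx_0))_n$. Conceptually $L$ is the $\mf$-generalised limit of $\int f\,dm_N$, where $m_N:=\frac1N\sum_{n=0}^{N-1}\delta_{T^nx_0}$ are the empirical probability measures along the orbit; keeping this picture in mind guides both the support inclusion and the possible loss of mass.

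Next comes the technical heart: converting $L$ into a genuine $T$-invariant finite Borel measure. Since $(X,\tau)$ need not be locally $\tau$-compact (we deliberately do not assume (III*)), I cannot invoke the Riesz representation theorem directly. Instead I would define $\mu$ through inner regularity on $\Kc_\tau$, setting
\[
\mu(K) := \inf\{ L(f) : f \in C_b(X,\tau),\ 0 \le f \le 1,\ f \ge \1_K \} \qquad (K \in \Kc_\tau),
\]
and extend it by $\mu(U)=\sup\{\mu(K):K\subseteq U,\ K\in\Kc_\tau\}$ on $\tau$-open $U$ and by outer regularity elsewhere, through the standard Choquet--Carath\'eodory procedure. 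Assumption (IV) is exactly what makes this work: a $\tau$-compact set is $\tau$-metrizable, hence a $\tau$-closed $G_\delta$, which supplies the regularity needed for the resulting set function to be countably additive and inner regular; assumption (III) then lets me read $\mu$ as a Borel measure for the Polish topology $\tau_X$. Because the escaping mass of the net $(m_N)$ is invisible to inner regularity, one only obtains $\mu(X)\leq L(\1)=1$, which is precisely the inequality claimed.

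From here the bound and the support inclusion are comparatively direct. For $\mu(K)\geq\mf(\1_{N(x_0,K)})$ one notes that any admissible $f\geq\1_K$ satisfies $f(T^nx_0)\geq\1_K(T^nx_0)=\1_{N(x_0,K)}(n)$ pointwise, so $L(f)\geq\mf(\1_{N(x_0,K)})$ by positivity of $\mf$, and one takes the infimum. For the support, if $U$ is $\tau$-open and disjoint from $\orb(x_0,T)$ then $U$ is in fact disjoint from $\cl{\orb(x_0,T)}^\tau$; for a compact $K\subseteq U$ one produces a $\tau$-continuous majorant $f\geq\1_K$ vanishing along the orbit, whence $L(f)=0$, so $\mu(K)=0$ and therefore $\mu(U)=0$, giving $\supp(\mu)\subseteq\cl{\orb(x_0,T)}^\tau$.

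I expect $T$-invariance to be the main obstacle. The clean half is $\mu(T^{-1}K)\leq\mu(K)$: for a compact $K'\subseteq T^{-1}K$ the image $T(K')$ is $\tau$-compact with $T(K')\subseteq K$, every $f\geq\1_{T(K')}$ yields $f\circ T\geq\1_{K'}$, and $L(f\circ T)=L(f)$ gives $\mu(K')\leq\mu(T(K'))\leq\mu(K)$, so inner regularity delivers the inequality. The reverse inequality is the delicate point, since $T$ is neither open nor invertible and $T^{-1}K$ need not be $\tau$-compact; here one must exploit the regularity of $\mu$ together with the invariance $L\circ T = L$ carefully, approximating $T^{-1}A$ from inside and outside by $\tau$-compact and $\tau$-open sets and transporting them through $T$. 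This is exactly the step where the construction of \cite{GriMa2014} has to be adapted with care, and where assumptions (I)--(IV) are all used at once.
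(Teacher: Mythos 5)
Your overall strategy (a single shift-invariant positive functional $L$ built from $\mf$ along the orbit, then a regularization turning $L$ into a Borel measure) is in the right spirit, and several pieces are fine: the identity $L(f\circ T)=L(f)$, the lower bound $\mu(K)\geq\mf(\1_{N(x_0,K)})$, and the inequality $\mu(T^{-1}K)\leq\mu(K)$ obtained via $T(K')\subseteq K$. But the proposal has a genuine gap exactly where the lemma is hard: the reverse inequality $\mu(A)\leq\mu(T^{-1}(A))$, i.e.\ $T$-invariance itself, is never proved; you explicitly defer it (``one must exploit the regularity of $\mu$ together with $L\circ T=L$ carefully''). This is not a routine step that regularity hands you. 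In your framework the difficulty is concrete: the orbit can enter a compact set $K$ at times whose predecessors lie in no compact subset of $T^{-1}(K)$ (think of a map of $1/x$ type sending points arbitrarily far away into $K$), so inner regularity of $T^{-1}(K)$ cannot be invoked naively; one must show that such ``entries from infinity'' carry zero $\mf$-mass, a real argument combining continuity (images of $\tau$-compact sets are $\tau$-compact, by (I)) with the combinatorics of return times. This is precisely what the paper's machinery is engineered to deliver, and the paper does not work with one global functional at all: for each $K\in\Kc_{\tau}$ it applies Riesz's theorem to the functional $f\mapsto\int_{\NN}(\1_K f)(T^ix_0)\,d\mf(i)$ on $\Ci(K,\tau)$, obtaining a local measure $\mu_K$ with $\mu_K(K)=\mf(\1_{N(x_0,K)})$, extends it to $X$ by $\mu_K(A):=\mu_K(K\cap A)$ using (III), proves the family $(\mu_K)_{K\in\Kc_{\tau}}$ is monotone (Fact~\ref{Fact:2.2.2Increasing}), and defines $\mu:=\sup_{K\in\Kc_{\tau}}\mu_K$; invariance is then proved for this supremum (Fact~\ref{Fact:2.2.4muProp}), using (I), (IV), positivity and shift-invariance of $\mf$, and it is the directed structure of the family that makes that proof go through.

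There is a second, structural problem with your construction. All of its ingredients --- the majorants in $\mu(K):=\inf\{L(f):f\geq\1_K\}$, the Urysohn-type function ``vanishing along the orbit'' in your support argument, and the additivity of your content on disjoint compact sets --- presuppose a rich supply of bounded, globally $\tau$-continuous functions on $X$. But the hypotheses only make $(X,\tau)$ Hausdorff satisfying (I)--(IV); $(X,\tau)$ need not be completely regular or normal, and a Hausdorff space can fail to admit any non-constant continuous real-valued function. In that case your set function degenerates ($\mu(K)=L(\1)=1$ for every non-empty $K\in\Kc_{\tau}$, destroying additivity on disjoint compacta), and the Choquet--Carath\'eodory extension you invoke has nothing to work with; note also that the classical content-to-measure theorem is a locally compact statement, so even granting enough functions, countable additivity is not automatic here. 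Assumption (IV) serves exactly this purpose in the paper, but \emph{locally}: each $K\in\Kc_{\tau}$ is compact and metrizable, hence normal with plenty of continuous functions defined on $K$ itself, which is why the paper's functionals live on $\Ci(K,\tau)$ rather than on the space of bounded $\tau$-continuous functions on $X$. To repair your proof you would either have to add complete regularity of $\tau$ as a hypothesis (weakening the lemma) or localize the whole construction to compact sets --- at which point you are back to the paper's argument.
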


\begin{remark}
	Lemma \ref{Lem:NewRemark2.6and2.12} is a rather technical result which allows us to construct invariant measures. Note that:
	\begin{enumerate}[(a)]
		\item Assumptions (I), (II), (III) and (IV) are fulfilled by the initial topology $\tau_X$. However, if the $\tau_X$-compact sets are too small, given an arbitrary point $x_0 \in X$ (even with some kind of recurrence property) we could have $\mf(\1_{N(x_0,K)})=0$ for every $\tau_X$-compact set $K \subset X$ and hence the measure $\mu$ obtained could be the null measure on $X$. We will consider a strictly coarser topology $\tau \subsetneq \tau_X$ in order to obtain ``interesting measures'' from Lemma \ref{Lem:NewRemark2.6and2.12} (see Theorems \ref{The:Banach} and \ref{The:Hilbert}).
		
		\item Following the previous comment, even if the $\tau$-compact sets are big enough, the measure $\mu$ could be the null measure on $X$ if we choose a point $x_0 \in X$ for which the return sets $N(x_0,K)$ are too small and hence $\mf(\1_{N(x_0,K)})=0$ for every $K \in \Kc_{\tau}$. We will get ``interesting measures'' whenever we combine Lemma \ref{Lem:NewRemark2.6and2.12} together with the existence of a point $x_0 \in X$ and a Banach limit $\mf$ for which $\mf(\1_{N(x_0,K)})>0$ for some $\tau$-compact subsets $K$ of $X$. Those conditions will come from property (III*) together with the existence of a reiteratively recurrent point $x_0 \in \RRec(T)$, see Theorem~\ref{The:Main}.
		
		\item In the proof of \cite[Theorem 1.5]{GriMa2014} it is shown that under conditions (I), (II) and (III*), one can change the final statement of Lemma \ref{Lem:NewRemark2.6and2.12} into:\\[-5pt]
		
		{\em then for each $x_0 \in X$ one can find a $T$-invariant finite Borel measure $\mu$ on $X$ such that $\mu(K) \geq \dinf(N(x_0,K))$ for every $K \in \Kc_{\tau}$},\\[-5pt]
		
		simply by choosing a non-principal ultrafilter $\Uc$ on $\NN$ and considering the Banach limit
		\[
		\mf(\phi) := \lim_{\Uc} \frac{1}{n} \sum_{i=1}^n \phi(i) \quad \text{ for every } \phi \in \ell^{\infty}.
		\]
		Moreover, under the same assumptions it is also stated in \cite[Remark 2.12]{GriMa2014} that\\[-5pt]
		
		{\em for each $x_0 \in X$ and each $K \in \Kc_{\tau}$ one can find a $T$-invariant finite Borel measure $\mu$ on $X$ such that $\mu(K) \geq \dsup(N(x_0,K))$}.\\[-5pt]
		
		This just ensures that the above inequality holds true for only one fixed $\tau$-compact subset $K$ of $X$. We will encounter the same problem when working with the upper Banach density, and we will have to combine some more sophisticated Banach limits in order to cope with several $\tau$-compact sets at the same time, see Subsection~\ref{Subsec:2.3Main}.
	\end{enumerate}
\end{remark}

Here is the main result of this section:

\begin{theorem}\label{The:Main}
	Let $(X,T)$ be a Polish dynamical system. Assume that $X$ is endowed with a Hausdorff topology $\tau$ which fulfills {\em(I)}, {\em(II)}, and {\em(III*)}. If $x_0 \in X$ is a reiteratively recurrent point for $T$, then one can find a $T$-invariant probability measure $\mu_{x_0}$ on $X$ such that
	\[
	x_0 \in \supp(\mu_{x_0}) \subset \cl{\orb(x_0,T)}^{\tau}.
	\]
	Moreover, if $T$ is reiteratively recurrent then one can find a $T$-invariant probability measure $\mu$ on $X$ with full support.
\end{theorem}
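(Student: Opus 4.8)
The plan is to deduce the full-support statement from the already-established pointwise part of the theorem by gluing together countably many measures. Since $(X,\tau_X)$ is Polish, it is separable and metrizable, and reiterative recurrence of $T$ means precisely that $\RRec(T)$ is dense in $X$. So I would first fix a countable set $\{x_k : k \geq 1\} \subset \RRec(T)$ which is dense in $X$: the subspace $\RRec(T)$ is itself separable, and a countable dense subset $D$ of it satisfies $\cl{D}^{\tau_X} \supset \RRec(T)$, whence $\cl{D}^{\tau_X} = X$ because $\RRec(T)$ is dense.

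Next, for each $k \geq 1$ the point $x_k$ is reiteratively recurrent, so the pointwise part of Theorem~\ref{The:Main} provides a $T$-invariant probability measure $\mu_k := \mu_{x_k}$ with $x_k \in \supp(\mu_k)$. I would then set
\[
\mu := \sum_{k=1}^{\infty} 2^{-k}\mu_k,
\]
meaning $\mu(A) = \sum_{k \geq 1} 2^{-k}\mu_k(A)$ for every $A \in \Bi(X)$. Since each $\mu_k$ is a Borel probability measure and all terms are non-negative, countable additivity of $\mu$ follows from interchanging the two non-negative summations, so $\mu$ is a genuine finite Borel measure with $\mu(X) = \sum_k 2^{-k} = 1$; thus $\mu$ is a probability measure. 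Invariance passes to this convex combination, since $\mu(T^{-1}(A)) = \sum_k 2^{-k}\mu_k(T^{-1}(A)) = \sum_k 2^{-k}\mu_k(A) = \mu(A)$ for every $A \in \Bi(X)$.

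It then remains to verify that $\mu$ has full support. If $U$ is any non-empty open set with $\mu_k(U) > 0$ for some $k$, then $\mu(U) \geq 2^{-k}\mu_k(U) > 0$; consequently $\supp(\mu) \supset \supp(\mu_k)$ for every $k$, and in particular $x_k \in \supp(\mu)$ for all $k$. As $\supp(\mu)$ is closed and $\{x_k : k \geq 1\}$ is dense in $X$, I conclude $\supp(\mu) = X$, i.e. $\mu$ has full support, as desired.

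Given the pointwise part, this final statement is essentially formal, and the only points that require any care are (i) that a countable weighted sum of $T$-invariant Borel probability measures is again a $T$-invariant Borel probability measure, and (ii) that the support of such a sum contains the support of each summand. The genuinely hard work — producing, for a prescribed reiteratively recurrent point $x_0$, an invariant probability measure whose support contains $x_0$, via Lemma~\ref{Lem:NewRemark2.6and2.12} together with a choice of Banach limit adapted to the upper Banach density — is confined to the pointwise statement, which I am taking as given here.
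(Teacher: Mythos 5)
Your argument for the final (``Moreover'') assertion is correct, and it is in fact exactly the paper's own closing step: take a countable dense subset of $\RRec(T)$ (available since $X$ is separable and metrizable), apply the pointwise assertion to each of its points, and form the weighted sum $\sum_k 2^{-k}\mu_{x_k}$, whose support contains each $x_k$ and is therefore all of $X$. The verifications you record (countable additivity of the sum, invariance of the convex combination, monotonicity of supports under sums) are all fine.

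The problem is that this is not a proof of the theorem: the pointwise assertion, which you explicitly ``take as given'', \emph{is} the theorem --- it is its first and substantive claim, and nothing established earlier in the paper delivers it. What is actually available at this stage is only Lemma~\ref{Lem:NewRemark2.6and2.12}, which, for an arbitrary $x_0$ and an arbitrary Banach limit $\mf$, produces a $T$-invariant finite measure $\mu$ (possibly the null measure) satisfying $\mu(K)\geq\mf(\1_{N(x_0,K)})$ for $K\in\Kc_{\tau}$. To get from there to a probability measure with $x_0\in\supp(\mu_{x_0})\subset\cl{\orb(x_0,T)}^{\tau}$ one needs the two steps that constitute the paper's proof: (i) Fact~\ref{Fact:2.3.1Bdsup} --- given a $\tau$-compact set $U$ with $\Bdsup(N(x_0,U))>0$, choose intervals $I_k=[i_k+1,i_k+N_k]$ with $N_k\to\infty$ witnessing the upper Banach density, fix a non-principal ultrafilter $\Uc$, and use the Banach limit $\mf(\phi)=\lim_{\Uc}\frac{1}{N_k}\sum_{n\in I_k}\phi(n)$, so that $\mf(\1_{N(x_0,U)})=\Bdsup(N(x_0,U))>0$ and the measure from Lemma~\ref{Lem:NewRemark2.6and2.12} satisfies $\mu(U)>0$ (then normalize); and (ii) Fact~\ref{Fact:2.3.2mu_x0} --- use hypothesis (III*) to choose a $\tau_X$-neighbourhood basis $(U_n)_n$ of $x_0$ consisting of $\tau$-compact sets (this is the only place where (III*) enters, and it is precisely what makes reiterative recurrence exploitable, since it provides sets to which (i) applies), then glue the resulting measures $\mu_n$, which satisfy $\mu_n(U_n)>0$ and $\supp(\mu_n)\subset\cl{\orb(x_0,T)}^{\tau}$, into $\mu_{x_0}=\sum_n 2^{-n}\mu_n$. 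Without carrying out (i) and (ii), your proposal establishes only the easy reduction of the global statement to the pointwise one and omits the theorem's actual content.
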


\begin{remark}\label{Rem:compact}
	If the Polish dynamical system $T:(X,\tau_X)\rightarrow (X,\tau_X)$ is locally compact, its initial topology $\tau_X$ already fulfills properties (I), (II) and (III*), and hence (III) and (IV). In particular, the later is true whenever $(X,\tau_X)$ is a (metrizable) compact space.
\end{remark}

\subsection{Proof of Lemma \ref{Lem:NewRemark2.6and2.12}}

We modify the construction given in \cite[Section 2.2]{GriMa2014}. Let $(X,T)$ be a Polish dynamical system, denote by $\tau_X$ the initial topology of $X$ and assume that $X$ is endowed with a Hausdorff topology $\tau$ which fulfills (I), (II), (III) and (IV). Fix $x_0 \in X$ and let $\mf:\ell^{\infty}\rightarrow \RR$ be a Banach limit. For each $K \in \Kc_{\tau}$ denote by $\Ci(K,\tau)$ the space of all $\tau$-continuous real-valued functions on $K$.

\begin{fact}[\textbf{Modification of \cite[Fact 2.2]{GriMa2014}}]\label{Fact:2.2.1Riesz}
	For every $K \in \Kc_{\tau}$ there is a unique (non-negative) finite Borel regular measure $\mu_K$ on $K$ such that
	\[
	\int_K f d\mu_K = \int_{\NN} (\1_K f) (T^ix_0) d\mf(i) \quad \text{ for every } f \in \Ci(K,\tau).
	\]
	The measure $\mu_K$ satisfies $0\leq \mu_K(K) = \mf(\1_{N(x_0,K)}) \leq 1$.
\end{fact}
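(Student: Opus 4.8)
The plan is to recognize the right-hand side as a positive linear functional on $\Ci(K,\tau)$ and to invoke the Riesz Representation Theorem, the metrizability hypothesis (IV) being exactly what guarantees that $(K,\tau)$ is a compact metric space, so that this theorem applies in its cleanest form (a unique regular Borel measure).

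First I would define $L:\Ci(K,\tau)\rightarrow\RR$ by $L(f):=\int_{\NN}(\1_K f)(T^ix_0)\,d\mf(i)$, after checking that the argument of $\mf$ is a legitimate element of $\ell^{\infty}$. Since $K$ is $\tau$-compact, every $f\in\Ci(K,\tau)$ is bounded on $K$, and extending by $0$ off $K$ keeps $|(\1_K f)(T^ix_0)|\leq\|f\|_{\infty}$ for all $i\geq1$. Linearity of $L$ follows from the linearity of $\mf$ together with $\1_K(\alpha f+\beta g)=\alpha\,\1_K f+\beta\,\1_K g$, and positivity of $L$ follows from the positivity of $\mf$: if $f\geq0$ on $K$ then $\1_K f\geq0$ on all of $X$, so $(\1_K f)(T^ix_0)\geq0$ for every $i$.

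The heart of the argument is then the Riesz Representation Theorem. By (IV) the $\tau$-compact set $K$ is $\tau$-metrizable, so $(K,\tau)$ is a compact metric space; on such a space every positive linear functional on $C(K)=\Ci(K,\tau)$ is represented by a unique non-negative finite Borel measure $\mu_K$, which is automatically regular. This yields $\int_K f\,d\mu_K=L(f)$ for all $f\in\Ci(K,\tau)$, the desired identity, with finiteness guaranteed by the value of $L(\1)$ computed below. By (III) the Borel $\sigma$-algebra $\Bi(K,\tau)$ coincides with $\Bi(K,\tau_X)$, so $\mu_K$ is unambiguously a Borel measure in the original topology as well (and may be extended by $0$ to a Borel measure on $X$ supported on $K$).

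Finally I would compute the total mass by taking $f=\1$: then $\1_K\cdot\1=\1_K$ and $(\1_K)(T^ix_0)=1$ precisely when $i\in N(x_0,K)$, so the representing sequence coincides with $\1_{N(x_0,K)}$, giving $\mu_K(K)=L(\1)=\mf(\1_{N(x_0,K)})$. The bounds $0\leq\mf(\1_{N(x_0,K)})\leq1$ then follow from positivity (as $\1_{N(x_0,K)}\geq0$) and from $\1_{N(x_0,K)}\leq\1$ together with $\mf(\1)=1$, the latter because $\1$ is a convergent sequence (property (d) of Banach limits). The main point requiring care is not any single computation but the correct deployment of the hypotheses: (IV) to place us in the metrizable compact setting where Riesz delivers both uniqueness and regularity, and (III) to read the resulting object as a genuine Borel measure for both topologies simultaneously.
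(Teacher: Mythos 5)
Your proposal is correct and follows essentially the same route as the paper: define the positive linear functional $L(f)=\int_{\NN}(\1_K f)(T^ix_0)\,d\mf(i)$, apply the Riesz Representation theorem on the compact space $(K,\tau)$, and evaluate at $f=\1$ to get $\mu_K(K)=\mf(\1_{N(x_0,K)})\in[0,1]$ via positivity and $\mf(\1)=1$. The only cosmetic difference is that you route uniqueness and regularity through the metrizability hypothesis (IV), whereas the paper simply invokes Riesz on the compact Hausdorff space $(K,\tau)$, where uniqueness already holds within the class of regular Borel measures.
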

\begin{proof}
	The first part is obvious by the Riesz's Representation theorem since, as mentioned in \cite[Fact 2.2]{GriMa2014}, the formula
	\[
	L(f) := \int_{\NN} (\1_K f) (T^ix_0) d\mf(i) \quad \text{ for every } f \in \Ci(K,\tau),
	\]
	defines a (non-negative) linear functional on $\Ci(K,\tau)$. Moreover, the measure $\mu_K$ satisfies
	\[
	0 \leq \mu_K(K) = \int_{\NN} (\1_K) (T^ix_0) d\mf(i) = \mf(\1_{N(x_0,K)}) \leq \mf(\1) = 1.\qedhere
	\]
\end{proof}

By (III) we have the equality $\Bi(X,\tau) = \Bi(X,\tau_X)$ and hence for each $K \in \Kc_{\tau}$ we can extend the measure $\mu_K$ into a Borel measure on the whole space $X$ (still denoted by $\mu_K$) using the formula
\[
\mu_K(A) := \mu_K(K \cap A) \quad \text{ for every Borel set } A \in \Bi(X).
\]
Clearly $\mu_K(X) \leq 1$, which implies the regularity of these measures. However, since the compact sets $K \in \Kc_{\tau}$ are not necessarily $T$-invariant and we could have $T^{-1}(K) \cap K = \varnothing$, the measures $\mu_K$ are not necessarily $T$-invariant. As in \cite{GriMa2014} we will define the $T$-invariant measure we are looking for by taking the supremum of the measures $\mu_K$, and this is possible due to the following fact:

\begin{fact}[\textbf{\cite[Fact 2.3]{GriMa2014}}]\label{Fact:2.2.2Increasing}
	If $K,F \in \Kc_{\tau}$ and if $K \subset F$, then $\mu_K \leq \mu_F$.
\end{fact}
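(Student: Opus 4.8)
The plan is to establish the pointwise inequality $\mu_K(A) \le \mu_F(A)$ for every $A \in \Bi(X)$ by reducing it, in two easy steps, to a single comparison on one $\tau$-compact set, where the positivity of the Banach limit does all the work. First I would use that both measures are extensions by intersection, so that $\mu_K(A) = \mu_K(A \cap K)$ and $\mu_F(A) = \mu_F(A \cap F) \ge \mu_F(A \cap K)$ (the last inequality because $A \cap K \subseteq A$ and $\mu_F$ is a nonnegative finite measure). Hence it suffices to prove $\mu_K(B) \le \mu_F(B)$ for every Borel set $B \subseteq K$.

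Next I would pass from arbitrary Borel $B \subseteq K$ to $\tau$-compact sets. Since $(K,\tau)$ is compact and, by (IV), metrizable, the finite Borel measure $\mu_K$ is regular on $K$; in particular it is inner regular by $\tau$-compact subsets of $K$ (every $\tau$-closed subset of the $\tau$-compact set $K$ lies in $\Kc_{\tau}$). Thus $\mu_K(B) = \sup\{\mu_K(C) : C \subseteq B,\ C \in \Kc_{\tau}\}$, and because $\mu_F(B) \ge \mu_F(C)$ for each such $C$, the statement follows as soon as I prove the single inequality $\mu_K(C) \le \mu_F(C)$ for every $\tau$-compact $C \subseteq K$.

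For this core step, fix a $\tau$-compact $C \subseteq K \subseteq F$. Outer regularity of $\mu_F$ on the compact metric space $(F,\tau)$, together with Urysohn's lemma, gives $\mu_F(C) = \inf\{\int_F f\, d\mu_F : f \in \Ci(F,\tau),\ \1_C \le f \le \1\}$. Fix such an $f$; its restriction $f|_K$ lies in $\Ci(K,\tau)$ and still satisfies $f|_K \ge \1_C$. I then compare the two sequences $\phi(i) := (\1_F f)(T^ix_0)$ and $\psi(i) := (\1_K f|_K)(T^ix_0)$ in $\ell^{\infty}$: for every $i$,
\[
\phi(i) - \psi(i) = f(T^ix_0)\,(\1_F - \1_K)(T^ix_0) \ge 0,
\]
since $f \ge 0$ on $F$ and $\1_K \le \1_F$. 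Applying linearity and positivity of $\mf$, and then Fact \ref{Fact:2.2.1Riesz} to $F$ and to $K$ respectively, yields
\[
\int_F f\, d\mu_F = \mf(\phi) \ge \mf(\psi) = \int_K f|_K\, d\mu_K \ge \int_K \1_C\, d\mu_K = \mu_K(C).
\]
Taking the infimum over all admissible $f$ gives $\mu_F(C) \ge \mu_K(C)$, which closes the chain.

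I expect the only genuinely delicate point to be precisely this core comparison, and the reason it is not trivial is that the two Riesz functionals do \emph{not} agree on $\Ci(K,\tau)$: a continuous $f$ on $F$ contributes to $\mu_F$ an extra term $\int_{\NN} (\1_{F \setminus K} f)(T^ix_0)\, d\mf(i)$ that has no analogue for $\mu_K$. The crux is the observation that for $f \ge 0$ this extra term is automatically nonnegative, so the positivity of the Banach limit turns what would be a \emph{failed} equality into exactly the desired inequality. Everything else is bookkeeping, namely that both topologies share the same Borel sets (so $B \subseteq K$ is unambiguously Borel, by (III)) and that regularity of $\mu_K$ and $\mu_F$ is available in the $\tau$-topology on $K$ and $F$ (by (IV) and finiteness).
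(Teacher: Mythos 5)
Your proof is correct and follows essentially the same route as the paper's, which simply defers to \cite[Fact 2.3]{GriMa2014}: reduce, via regularity of the Riesz measures $\mu_K$ and $\mu_F$ guaranteed by Fact \ref{Fact:2.2.1Riesz} and condition (IV), to a comparison on $\tau$-compact subsets of $K$, then sandwich the indicator by Urysohn functions and apply the positivity of the Banach limit $\mf$ to the two defining sequences. The only cosmetic difference is that you organize the estimate with infima where the cited argument uses an $\eps$, and your monotonicity justification in the first step should read $A \cap K \subset A \cap F$ rather than $A \cap K \subset A$, a harmless slip.
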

\begin{proof}
	The proof is exactly the same than that of \cite[Fact 2.3]{GriMa2014} and it uses essentially conditions (II), (IV) and the positivity of $\mf$.
\end{proof}

Since a finite union of $\tau$-compact subsets of $X$ is still an element of $\Kc_{\tau}$, from Fact~\ref{Fact:2.2.2Increasing} we deduce that the family $(\mu_K)_{K \in \Kc_{\tau}}$ has the following property: for any pair $K_1,K_2 \in \Kc_{\tau}$ one can find $F \in \Kc_{\tau}$ such that $\mu_F \geq \max\{\mu_{K_1},\mu_{K_2}\}$. Hence we can continue the construction given in \cite[Section~2.2]{GriMa2014}:

\begin{fact}[\textbf{Modification of \cite[Fact 2.4]{GriMa2014}}]\label{Fact:2.2.3muDef}
	If we set
	\[
	\mu(A) := \sup_{K\in\Kc_{\tau}} \mu_K(A) \quad \text{ for every Borel set } A \in \Bi(X),
	\]
	then $\mu$ is a (non-negative) Borel measure on $X$ such that $\mu(X) \leq 1$.
\end{fact}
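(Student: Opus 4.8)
The plan is to verify directly that the set function $\mu$ satisfies the axioms of a non-negative Borel measure, the only genuinely non-trivial point being countable additivity. Non-negativity and $\mu(\varnothing)=0$ are immediate from the corresponding properties of each $\mu_K$, and the bound $\mu(X)\leq 1$ follows at once from Fact~\ref{Fact:2.2.1Riesz}, which gives $\mu_K(X)\leq 1$ for every $K\in\Kc_\tau$: taking the supremum over $K$ preserves this upper bound. Monotonicity of $\mu$ is equally routine, since each $\mu_K$ is monotone, so $\mu_K(B)\leq\mu_K(A)\leq\mu(A)$ whenever $B\subset A$, and taking the supremum over $K$ on the left yields $\mu(B)\leq\mu(A)$.

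For countable additivity I would split the verification into two inequalities. Let $(A_n)_{n\geq1}$ be pairwise disjoint Borel sets with union $A$. The inequality $\mu(A)\leq\sum_n\mu(A_n)$ is the easy direction: for each fixed $K$, the (genuine) countable additivity of the finite measure $\mu_K$ gives $\mu_K(A)=\sum_n\mu_K(A_n)\leq\sum_n\mu(A_n)$, and taking the supremum over $K$ yields the claim.

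The reverse inequality is where the directedness of the family $(\mu_K)_{K\in\Kc_\tau}$ enters, and I expect this to be the main obstacle: a supremum of measures need not be additive in general, and it is precisely the fact that finite unions of $\tau$-compact sets are again $\tau$-compact, together with the monotonicity Fact~\ref{Fact:2.2.2Increasing}, that rescues additivity here. I would first establish finite additivity. Given finitely many pairwise disjoint Borel sets $A_1,\dots,A_m$ and $\eps>0$, choose for each $i$ a set $K_i\in\Kc_\tau$ with $\mu_{K_i}(A_i)>\mu(A_i)-\eps/m$; setting $F:=\bigcup_{i=1}^m K_i$, which is again $\tau$-compact, Fact~\ref{Fact:2.2.2Increasing} gives $\mu_F\geq\mu_{K_i}$ for every $i$, whence
\[
\mu\Bigl(\bigsqcup_{i=1}^m A_i\Bigr)\geq\mu_F\Bigl(\bigsqcup_{i=1}^m A_i\Bigr)=\sum_{i=1}^m\mu_F(A_i)\geq\sum_{i=1}^m\mu_{K_i}(A_i)>\sum_{i=1}^m\mu(A_i)-\eps.
\]
Letting $\eps\to0$ and combining with the easy direction shows that $\mu$ is finitely additive.

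Finally I would pass to the countable union by monotonicity. For every $m$ the finite additivity just proved, together with $\bigsqcup_{i=1}^m A_i\subset A$, gives
\[
\mu(A)\geq\mu\Bigl(\bigsqcup_{i=1}^m A_i\Bigr)=\sum_{i=1}^m\mu(A_i),
\]
and letting $m\to\infty$ yields $\mu(A)\geq\sum_n\mu(A_n)$. Combined with the easy direction this establishes $\mu(A)=\sum_n\mu(A_n)$, so $\mu$ is a non-negative Borel measure on $X$ with $\mu(X)\leq 1$, as claimed.
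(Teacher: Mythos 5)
Your proof is correct and follows essentially the same route as the paper, which simply defers to \cite[Fact 2.4]{GriMa2014}: the key point in both is that the family $(\mu_K)_{K\in\Kc_{\tau}}$ is upward directed (via Fact~\ref{Fact:2.2.2Increasing} and the stability of $\Kc_{\tau}$ under finite unions), so the supremum is countably additive. Your explicit two-inequality verification of countable additivity is exactly the standard argument behind that cited fact, adapted as the paper notes to allow $\mu$ to be the null measure.
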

\begin{proof}
	The proof is exactly the same than that of \cite[Fact 2.4]{GriMa2014} but with $\mu$ having the possibility of being the null measure.
\end{proof}

\begin{fact}[\textbf{Modification of \cite[Fact 2.5]{GriMa2014}}]\label{Fact:2.2.4muProp}
	The measure $\mu$ is $T$-invariant and we have the inequality
	\[
	\mu(K) \geq \mf(\1_{N(x_0,K)}) \quad \text{ for every } K \in \Kc_{\tau}.
	\]
\end{fact}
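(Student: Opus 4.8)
The inequality is the easy half: by Fact~\ref{Fact:2.2.3muDef} we have $\mu(K) = \sup_{F \in \Kc_{\tau}} \mu_F(K) \ge \mu_K(K)$, and Fact~\ref{Fact:2.2.1Riesz} identifies $\mu_K(K) = \mf(\1_{N(x_0,K)})$, so $\mu(K) \ge \mf(\1_{N(x_0,K)})$ is immediate. The real content is the $T$-invariance of $\mu$, and here the plan is to prove the \emph{single} inequality $\mu(T^{-1}(A)) \le \mu(A)$ for every $A \in \Bi(X)$ and then upgrade it to equality: applying it to $X\setminus A$ and using that $\mu$ is finite while $T^{-1}(X)=X$ forces $\mu(T^{-1}(A)) = \mu(A)$ by complementation. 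Since $\mu(T^{-1}(A)) = \sup_{K} \mu_K(T^{-1}(A))$ by the very definition of $\mu$, it suffices to show that the pushforward $\nu_K := \mu_K \circ T^{-1}$ satisfies $\nu_K \le \mu$ for every $K \in \Kc_{\tau}$.

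The crucial device is to replace $K$ by the enlargement $K' := K \cup T(K)$, which is again $\tau$-compact because $T$ is $\tau$-continuous (property~(I)). The point of this enlargement is that $T(K) \subseteq K'$, so that $T^i x_0 \in K$ forces $T^{i+1}x_0 \in K'$; that is, $\1_{K'}(T^{i+1}x_0) \ge \1_K(T^i x_0)$ for every $i$. (Working directly with the preimage $T^{-1}(K)$ would fail, since it need not be $\tau$-compact.) Now fix $f \in \Ci(K',\tau)$ with $f \ge 0$; then $f \circ T \in \Ci(K,\tau)$, and Fact~\ref{Fact:2.2.1Riesz}, applied once on $K$ and once on $K'$, gives
\[
\int f \, d\nu_K = \int_K (f\circ T)\, d\mu_K = \mf\big(i \mapsto \1_K(T^i x_0)\, f(T^{i+1}x_0)\big), \qquad \int f\, d\mu_{K'} = \mf\big(i \mapsto \1_{K'}(T^i x_0)\, f(T^i x_0)\big).
\]
By shift-invariance of $\mf$ the second Banach limit equals $\mf\big(i \mapsto \1_{K'}(T^{i+1}x_0)\, f(T^{i+1}x_0)\big)$, and since $f \ge 0$ the pointwise bound above together with the positivity of $\mf$ yields $\int f\, d\nu_K \le \int f\, d\mu_{K'}$. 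Finally $\mu_{K'} \le \mu$ gives $\int f\, d\mu_{K'} \le \int f\, d\mu$, so that $\int f\, d\nu_K \le \int f\, d\mu$ for every non-negative $f \in \Ci(K',\tau)$.

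It then remains to pass from this family of functional inequalities to the measure inequality $\nu_K \le \mu$. Both $\nu_K$ (which is concentrated on $T(K)\subseteq K'$) and the restriction $\mu|_{K'}$ are finite Borel measures on the compact set $K'$, which is $\tau$-metrizable by property~(IV). Approximating the indicator of a relatively $\tau$-open set $U \subseteq K'$ from below by an increasing sequence in $\Ci(K',\tau)$ (for instance $\min(1, n\, d(\cdot, K'\setminus U))$ for a metric $d$ inducing $\tau$ on $K'$) and invoking monotone convergence would give $\nu_K(U) \le \mu(U)$; outer regularity of both finite measures then upgrades this to $\nu_K(B) \le \mu(B)$ for all Borel $B \subseteq K'$, hence $\nu_K \le \mu$ on all of $X$. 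Taking the supremum over $K \in \Kc_{\tau}$ yields $\mu(T^{-1}(A)) = \sup_K \nu_K(A) \le \mu(A)$, and the complementation step finishes the argument. I expect the genuinely delicate point to be the second paragraph: choosing the enlargement $K' = K\cup T(K)$ so that shift-invariance of the Banach limit converts the time shift on the orbit $(T^i x_0)_i$ into a clean comparison with a pushforward, all while sidestepping the non-compact preimage $T^{-1}(K)$.
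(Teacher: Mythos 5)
Your proof is correct and follows essentially the same route as the paper, whose own write-up simply defers the $T$-invariance to \cite[Fact 2.5]{GriMa2014}: that argument is precisely the one you reconstruct --- comparing the pushforward $\mu_K\circ T^{-1}$ with $\mu_{K\cup T(K)}$ via shift-invariance and positivity of $\mf$, using property (IV) (metrizability of $\tau$-compacta) to pass from the functional inequality to the measure inequality, and finiteness of $\mu$ for the complementation step --- exactly the ingredients (I), (IV), positivity and shift-invariance that the paper lists. The easy part, $\mu(K)\geq\mu_K(K)=\mf(\1_{N(x_0,K)})$, is handled identically in both.
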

\begin{proof}
	The first part of the proof is exactly the same than that of \cite[Fact 2.5]{GriMa2014} and it uses essentially conditions (I), (IV), the positivity of $\mf$ and the fact that it is shift-invariant. By Fact \ref{Fact:2.2.1Riesz}, for each $K \in \Kc_{\tau}$ we have that
	\[
	\mu(K) \geq \mu_K(K) = \mf(\1_{N(x_0,K)}).\qedhere
	\]
\end{proof}

To finish the proof of Lemma \ref{Lem:NewRemark2.6and2.12} we include a property, not shown in \cite[Section 2.2]{GriMa2014}, about the support of the measure constructed:

\begin{fact}\label{Fact:2.2.5muSupp}
	We have the inclusion $\supp(\mu) \subset \cl{\orb(x_0,T)}^{\tau}$.
\end{fact}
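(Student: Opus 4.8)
The plan is to prove the contrapositive: I will show that any point $x \in X$ lying outside $\cl{\orb(x_0,T)}^{\tau}$ cannot belong to $\supp(\mu)$. Since $x \notin \cl{\orb(x_0,T)}^{\tau}$, the set $U := X \setminus \cl{\orb(x_0,T)}^{\tau}$ is a $\tau$-open neighbourhood of $x$ which is disjoint from the entire orbit, so in particular $T^i x_0 \notin U$ for every $i \in \NN_0$. It then suffices to verify that $\mu(U) = 0$, since this exhibits a $\tau$-open set witnessing $x \notin \supp(\mu)$; and by Fact~\ref{Fact:2.2.3muDef} this reduces to proving $\mu_K(U) = \mu_K(K \cap U) = 0$ for every $K \in \Kc_{\tau}$.

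Fix $K \in \Kc_{\tau}$. The heart of the matter is that $\mu_K$ only charges orbit points lying in $K$, and here none of those lie in $U$. To make this precise I would pass through continuous functions. First note that $K \setminus U = K \cap (X \setminus U)$ is a $\tau$-closed subset of the $\tau$-compact set $K$, hence itself $\tau$-compact, and it is disjoint from any $\tau$-compact $F \subset K \cap U$. Using condition (IV), $K$ is $\tau$-metrizable, so Urysohn's lemma supplies a $\tau$-continuous $g : K \to [0,1]$ with $g \equiv 1$ on $F$ and $g \equiv 0$ on $K \setminus U$. Then Fact~\ref{Fact:2.2.1Riesz} gives
\[
\mu_K(F) \leq \int_K g \, d\mu_K = \int_{\NN} (\1_K g)(T^i x_0) \, d\mf(i).
\]
For each $i$, either $T^i x_0 \notin K$, so $(\1_K g)(T^i x_0) = 0$, or else $T^i x_0 \in K$; but in that case $T^i x_0 \in K \setminus U$ (as no orbit point lies in $U$), where $g$ vanishes. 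Hence the integrand is identically zero and $\mu_K(F) = 0$.

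Since $\mu_K$ is a finite Borel regular measure, inner regularity yields $\mu_K(K \cap U) = \sup\{\mu_K(F) : F \subset K \cap U \text{ is } \tau\text{-compact}\} = 0$, and therefore $\mu(U) = \sup_{K \in \Kc_{\tau}} \mu_K(K \cap U) = 0$, which finishes the argument.

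I expect the only genuinely delicate point to be the reduction to continuous functions. One must be careful that $g$, which is defined only on $K$, feeds correctly into the integral formula of Fact~\ref{Fact:2.2.1Riesz}, so that $(\1_K g)(T^i x_0)$ is read as $g(T^i x_0)$ when $T^i x_0 \in K$ and as $0$ otherwise; it is precisely condition (IV) that furnishes the Urysohn function on $K$, and the regularity of $\mu_K$ from the Riesz representation that lets one pass from integrals of such functions back to the measure of the relatively open set $K \cap U$. The remaining ingredients — the disjointness and $\tau$-compactness of the relevant closed sets, and the supremum formula for $\mu$ — are routine.
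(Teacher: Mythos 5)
Your proof is correct and takes essentially the same approach as the paper's: both arguments hinge on feeding a Urysohn function on the compact metrizable set $K$, vanishing at every orbit point of $x_0$ that lies in $K$, into the integral formula of Fact~\ref{Fact:2.2.1Riesz}, and then passing to the supremum defining $\mu$. The differences are only organizational: you reach $\mu_K(K\cap U)=0$ via inner regularity of $\mu_K$, whereas the paper first proves $\supp(\mu_K)\subset K\cap\cl{\orb(x_0,T)}^{\tau}$ by a contradiction argument (and the paper also makes explicit that condition (II) is what turns your $\tau$-open null set into a $\tau_X$-neighbourhood, the topology in which $\supp(\mu)$ is taken).
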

\begin{proof}
	Write $O(x_0):=\cl{\orb(x_0,T)}^{\tau}$. First we show that for each $K \in \Kc_{\tau}$ we have the inclusion $\supp(\mu_K) \subset K \cap O(x_0)$: indeed, for any $K \in \Kc_{\tau}$ and any point $x \in K\setminus O(x_0)$, by compactness, there exists a positive function $f \in \Ci(K,\tau)$ such that
	\[
	f=0 \text{ on } K\cap O(x_0) \quad \text{ and } \quad f(x)=1. 
	\]
	If we suppose that $x \in \supp(\mu_K)$ and if we take a $\tau$-neighbourhood $U$ of $x$ in $(K,\tau)$ such that $f(U) \subset [1/2,\infty[$ then we have
	\[
	0 < \frac{1}{2}\mu_K(U) \leq \int_K f d\mu_K = \int_{\NN} (\1_K f) (T^ix_0) d\mf(i) = \mf(\symbf{0}) = 0,
	\]
	since $f(T^ix_0)=0$ for every $i\in\NN$, which is a contradiction. Hence $x \notin \supp(\mu_K)$.\\[-5pt]
	
	Finally, given $x \notin O(x_0)$ there is an $\tau$-open neighbourhood $U$ of $x$ in $(X,\tau)$, which by (II) is also a $\tau_X$-neighbourhood of $x$ in $(X,\tau_X)$, such that $U\cap O(x_0)=\varnothing$. Hence, since $\supp(\mu_K) \subset O(x_0)$ for every $K \in \Kc_{\tau}$, we deduce that $\mu_K(U)=\mu_K(K\cap U)=0$ for every $K \in \Kc_{\tau}$. By the definition of $\mu$ we get $\mu(U)=0$ and hence that $x \notin \supp(\mu)$.
\end{proof}

\subsection{Proof of Theorem \ref{The:Main}}\label{Subsec:2.3Main}

Let $(X,T)$ be a Polish dynamical system, denote by $\tau_X$ the initial topology of $X$ and assume that $X$ is endowed with a Hausdorff topology $\tau$ which fulfills (I), (II) and (III*).

\begin{fact}\label{Fact:2.3.1Bdsup}
	Given $x_0 \in X$ and $U \in \Kc_{\tau}$ with $\Bdsup(N(x_0,U))>0$, there exists a $T$-invariant probability measure $\mu$ on $X$ such that $\mu(U)>0$. Moreover, we have the inclusion
	\[
	\supp(\mu) \subset \cl{\orb(x_0,T)}^{\tau}.
	\]
\end{fact}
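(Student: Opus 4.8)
The plan is to feed a carefully chosen Banach limit into Lemma~\ref{Lem:NewRemark2.6and2.12}, arranging things so that the resulting invariant measure gives positive mass to the single compact set $U$. The key realization is that a Cesàro-type Banach limit (as in Remark~2.7(c)) only detects \emph{lower} density; to capture positive \emph{upper Banach} density of the return set $N(x_0,U)$ one must build a Banach limit tailored to the drifting windows that realize this density.

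First I would unwind the definition of upper Banach density. Writing $A := N(x_0,U)$ and $d := \Bdsup(A) > 0$, the definition provides a subsequence $N_k \to \infty$ along which $\max_{m\geq 0}\frac{\card(A\cap[m,m+N_k])}{N_k+1} \to d$. For each $k$ this maximum is actually attained (the cardinality takes only finitely many integer values in $\{0,1,\dots,N_k+1\}$) at some $m_k\geq 0$, so I obtain intervals $I_k := [m_k, m_k+N_k]$ of length $N_k+1 \to \infty$ with $\frac{\card(A\cap I_k)}{N_k+1} \to d$.

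Next, fixing a non-principal ultrafilter $\Uc$ on $\NN$, I would define
\[
\mf(\phi) := \lim_{\Uc} \frac{1}{N_k+1}\sum_{i\in I_k}\phi(i), \qquad \phi\in\ell^{\infty},
\]
and verify that $\mf$ is a Banach limit. Linearity, positivity and continuity ($|\mf(\phi)|\leq\|\phi\|_\infty$) are immediate. The delicate axiom is shift-invariance: shifting $\phi$ by a fixed $a\in\NN$ alters each window average only through a boundary term bounded by $2a\|\phi\|_\infty/(N_k+1)$, which tends to $0$ precisely because the window lengths $N_k$ grow; this is exactly where the growth of the windows is essential. A similar tail estimate gives axiom (d), namely $\mf(\phi)=\lim_n\phi(n)$ for convergent $\phi$, whence $\mf(\1)=1$. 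By construction $\mf(\1_{N(x_0,U)}) = \lim_{\Uc}\frac{\card(A\cap I_k)}{N_k+1} = d$, since the underlying sequence in $k$ already converges to $d$.

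Finally I would apply Lemma~\ref{Lem:NewRemark2.6and2.12} with this $\mf$ --- its hypotheses (I)--(IV) are met, since (II) and (III*) are in force and imply (III) and (IV) --- to obtain a $T$-invariant finite Borel measure $\mu_0$ with $\mu_0(X)\leq 1$, with $\mu_0(U)\geq \mf(\1_{N(x_0,U)}) = d > 0$ (using $U\in\Kc_\tau$), and with $\supp(\mu_0)\subset\cl{\orb(x_0,T)}^{\tau}$. Since $\mu_0(X)\geq\mu_0(U)>0$, normalizing $\mu := \mu_0/\mu_0(X)$ yields a $T$-invariant \emph{probability} measure with $\mu(U)>0$ and with the same support, as required. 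I expect the main obstacle to be the third step: recognizing that upper Banach density forces one to average along drifting windows, and then confirming that the associated functional is genuinely shift-invariant. This construction handles one compact set at a time, which is precisely what Fact~\ref{Fact:2.3.1Bdsup} requires; coping with several compact sets simultaneously would demand the more elaborate combinations of Banach limits alluded to in the introduction to this subsection.
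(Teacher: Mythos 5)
Your proposal is correct and follows essentially the same route as the paper's proof: extract drifting windows $I_k$ realizing the upper Banach density, build the Banach limit $\mf(\phi)=\lim_{\Uc}\frac{1}{N_k+1}\sum_{i\in I_k}\phi(i)$ over a non-principal ultrafilter so that $\mf(\1_{N(x_0,U)})=\Bdsup(N(x_0,U))>0$, feed it into Lemma~\ref{Lem:NewRemark2.6and2.12}, and normalize. Your explicit verification of the Banach limit axioms (in particular shift-invariance via the boundary estimate $2a\|\phi\|_\infty/(N_k+1)\to 0$) is a detail the paper leaves implicit, but the argument is the same.
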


Fact~\ref{Fact:2.3.1Bdsup} allow us to (slightly) extend Theorem~\ref{The:Main} in terms of the {\em recurrence} notion introduced in \cite[Section~2.5]{GriMa2014} (see Remark~\ref{Rem:GriMa2014RRec} below for the explicit statement) which at the end turns out to be equivalent to the {\em recurrence} notion used here (see Theorem~\ref{The:Polish}).

\begin{proof}[Proof of Fact~\ref{Fact:2.3.1Bdsup}]
	Since
	\[
	\Bdsup(N(x_0,U)) := \limsup_{N \to \infty} \left( \max_{m \geq 0} \dfrac{\card(N(x_0,U)\cap [m,m+N])}{N+1} \right)>0,
	\]
	there exists an increasing sequence of natural numbers $(N_k)_{k \in \NN} \in \NN^{\NN}$ and a sequence of intervals $I_k = [i_k+1,i_k+N_k] \subset \NN$ such that
	\begin{equation}\label{eq:limU}
		\Bdsup(N(x_0,U)) = \lim_{k \to \infty} \dfrac{\card(N(x_0,U)\cap I_k)}{N_k}.
	\end{equation}
	Then we fix the Banach limit $\mf:\ell^{\infty}\rightarrow\RR$ defined as
	\[
	\mf(\phi) :=  \lim_{\Uc} \frac{1}{N_k} \sum_{n \in I_k} \phi(n) \quad \text{ for every } \phi \in \ell^{\infty},
	\]
	for some fixed non-principal ultrafilter $\Uc \subset \Part(\NN)$ on $\NN$. By \eqref{eq:limU} we have
	\[
	\mf(\1_{N(x_0,U)}) = \Bdsup(N(x_0,U))>0.
	\]
	Since $\tau$ fulfills (I), (II) and (III*), by \cite[Fact 2.1]{GriMa2014} it also has properties (III) and (IV) so we can apply Lemma \ref{Lem:NewRemark2.6and2.12} to $x_0$ and $\mf$ obtaining a (non-negative) $T$-invariant finite Borel measure $\mu$ on $X$ for which $\mu(K)\geq \mf(\1_{N(x_0,K)})$ for each $K \in \Kc_{\tau}$ and such that
	\[
	\supp(\mu) \subset \cl{\orb(x_0,T)}^{\tau}.
	\]
	In particular we get $\mu(U) \geq \mf(\1_{N(x_0,U)}) > 0$ so $\mu$ is a positive $T$-invariant finite Borel measure. Normalizing $\mu$ we get the desired measure.
\end{proof}

\begin{fact}\label{Fact:2.3.2mu_x0}
	Given $x_0 \in \RRec(T)$, there exists a $T$-invariant probability measure $\mu_{x_0}$ on $X$ such that
	\[
	x_0 \in \supp(\mu_{x_0}) \subset \cl{\orb(x_0,T)}^{\tau}.
	\]
\end{fact}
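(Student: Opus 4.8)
The plan is to obtain Fact~\ref{Fact:2.3.2mu_x0} from Fact~\ref{Fact:2.3.1Bdsup} by assembling a single measure out of countably many measures, each one responsible for a single basic neighbourhood of $x_0$. The essential gap to bridge is that Fact~\ref{Fact:2.3.1Bdsup} only guarantees positive mass on one prescribed $\tau$-compact set $U$, and a bare inequality $\mu(U)>0$ says nothing about \emph{where} inside $U$ the mass sits; by contrast the requirement $x_0\in\supp(\mu_{x_0})$ demands positive mass on \emph{every} neighbourhood of $x_0$ at once, i.e. it forces mass arbitrarily close to $x_0$. The device that closes this gap is a countable convex combination over a neighbourhood basis.

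First I would fix a convenient neighbourhood basis. Since $(X,\tau_X)$ is Polish, hence metrizable and first countable, $x_0$ admits a countable $\tau_X$-neighbourhood basis; intersecting this with the $\tau_X$-neighbourhood basis of $\tau$-compact sets provided by property (III*), one extracts a countable family $(U_j)_{j\in\NN}\subset\Kc_{\tau}$ which is still a $\tau_X$-neighbourhood basis of $x_0$. Because $x_0\in\RRec(T)$, each $U_j$ is a neighbourhood of $x_0$ and hence $\Bdsup(N(x_0,U_j))>0$ for every $j$. Applying Fact~\ref{Fact:2.3.1Bdsup} to the pair $(x_0,U_j)$ for each $j$ yields a $T$-invariant probability measure $\mu_j$ with $\mu_j(U_j)>0$ and $\supp(\mu_j)\subset\cl{\orb(x_0,T)}^{\tau}$, and I would then set
\[
\mu_{x_0} := \sum_{j=1}^{\infty} \frac{1}{2^j}\,\mu_j.
\]

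As a countable convex combination of $T$-invariant probability measures with weights summing to $1$, the measure $\mu_{x_0}$ is again a $T$-invariant probability measure. The support inclusion follows from the elementary identity $\supp(\mu_{x_0})=\cl{\bigcup_{j}\supp(\mu_j)}^{\tau}$ valid for such a combination with strictly positive weights: since each $\supp(\mu_j)$ lies in the $\tau$-closed set $\cl{\orb(x_0,T)}^{\tau}$, so does $\supp(\mu_{x_0})$. Finally, to see $x_0\in\supp(\mu_{x_0})$, take any $\tau$-open set $V\ni x_0$; by (II) it is also a $\tau_X$-neighbourhood of $x_0$, so it contains some basic $U_j$, whence $\mu_{x_0}(V)\geq\mu_{x_0}(U_j)\geq 2^{-j}\mu_j(U_j)>0$. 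The genuinely routine steps are the $T$-invariance and total mass of the series and the description of the support of a convex combination; the only point requiring a little care is the extraction of a \emph{countable} neighbourhood basis of $x_0$ consisting of $\tau$-compact sets, where first countability of $\tau_X$ must be combined with property (III*). Everything else is bookkeeping once Fact~\ref{Fact:2.3.1Bdsup} is available, and the same convex-combination idea (now run over a countable dense set of reiteratively recurrent points) will deliver the full-support measure in the ``moreover'' part of Theorem~\ref{The:Main}.
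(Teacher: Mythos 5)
Your proof is correct and is essentially the paper's own argument: the paper likewise uses (III*) to take a countable $\tau_X$-neighbourhood basis $(U_n)_{n\in\NN}$ of $x_0$ consisting of $\tau$-compact sets, applies Fact~\ref{Fact:2.3.1Bdsup} to each $U_n$, and forms the convex combination $\mu_{x_0}=\sum_{n\in\NN}2^{-n}\mu_n$, verifying $x_0\in\supp(\mu_{x_0})$ exactly as you do. The only cosmetic difference is that instead of invoking a general support identity for convex combinations, the paper checks the inclusion $\supp(\mu_{x_0})\subset\cl{\orb(x_0,T)}^{\tau}$ directly: any $x$ outside this $\tau$-closed set has a $\tau$-open (hence, by (II), $\tau_X$-open) neighbourhood $V$ disjoint from it, so $\mu_n(V)=0$ for every $n$ and thus $\mu_{x_0}(V)=0$.
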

\begin{proof}
	Set $O(x_0):=\cl{\orb(x_0,T)}^{\tau}$. Using (III*), let $(U_n)_{n\in\NN}$ be a basis of $\tau_X$-neighbourhoods of $x_0$ consisting of $\tau$-compact sets. Applying Fact \ref{Fact:2.3.1Bdsup} to each set $U_n$ we obtain a sequence $(\mu_n)_{n\in\NN}$ of $T$-invariant probability measures on $X$ for which $\mu_n(U_n)>0$ and such that $\supp(\mu_n) \subset O(x_0)$ for each $n\in\NN$. Then the measure
	\[
	\mu_{x_0} := \sum_{n\in\NN} \frac{\mu_n}{2^n}
	\]
	is a $T$-invariant probability measure on $X$. Moreover, for any $\tau_X$-neighbourhood $U$ of $x_0$ there is an integer $n \in \NN$ with $U_n \subset U$ and hence
	\[
	\mu_{x_0}(U) \geq \mu_{x_0}(U_n) \geq \frac{\mu_n(U_n)}{2^n} > 0.
	\]
	This implies that $x_0 \in \supp(\mu_{x_0})$. Also, given $x \notin O(x_0)$ there is a $\tau$-neighbourhood $V$ of $x$, which by (II) is also a $\tau_X$-neighbourhood of $x$, such that $V\cap O(x_0)=\varnothing$. Since $\supp(\mu_n) \subset O(x_0)$ for every $n\in\NN$ we deduce that $\mu_n(V)=0$ for every $n \in \NN$ and by the definition of $\mu_{x_0}$ we get $\mu_{x_0}(V)=0$. This implies that $x \notin \supp(\mu_{x_0})$ and hence
	\[
	x_0 \in \supp(\mu_{x_0}) \subset O(x_0) = \cl{\orb(x_0,T)}^{\tau}.\qedhere
	\]
\end{proof}

To complete the proof of Theorem \ref{The:Main}, let $T$ be reiteratively recurrent. Since $X$ is separable there is a countable set $\{x_n:n\in\NN\} \subset \RRec(T)$ which is dense in $X$. Applying Fact \ref{Fact:2.3.2mu_x0} to each point $x_n$ we obtain a sequence $(\mu_{x_n})_{n\in\NN}$ of $T$-invariant probability measures on $X$ such that $x_n \in \supp(\mu_{x_n})$ for each $n \in \NN$. Finally, the measure
\[
\mu := \sum_{n\in\NN} \frac{\mu_{x_n}}{2^n}
\]
is a $T$-invariant probability measure on $X$ with full support.\QEDh

\begin{remark}\label{Rem:GriMa2014RRec}
	Under the initial topological assumptions of Theorem \ref{The:Main}, and in view of Fact \ref{Fact:2.3.1Bdsup}, a generalization in terms of the {\em recurrence} notion introduced in \cite[Section~2.5]{GriMa2014}, and following the spirit of \cite[Proposition 2.11]{GriMa2014}, can be shown:\\[-5pt]
	
	{\em If for each open subset $U$ of $X$ there is a point $x_U \in X$ such that $\Bdsup(N(x_U,U))>0$, then one can find a $T$-invariant probability measure $\mu$ on $X$ with full support.}\\[-5pt]
	
	Indeed, one just has to use (III*) to consider an appropriate countable family of $\tau$-compact sets whose $\tau_X$-interiors form a base of the initial topology $\tau_X$, apply Fact \ref{Fact:2.3.1Bdsup} to those $\tau$-compact sets and take an infinite convex combination of the obtained measures.
\end{remark}

\section{From reiterative to frequent recurrence}\label{Sec:3FRecfromRRec}

Theorem \ref{The:Main} allows us to construct invariant measures starting from reiteratively recurrent points. In this section, we exploit this result in order to show that reiterative recurrence for adjoint operators on separable dual Banach spaces actually implies the stronger notion of frequent recurrence (Theorem \ref{The:Banach}).

\subsection{A key lemma}

An important tool for the proof of Theorem \ref{The:Banach} is the following lemma:

\begin{lemma}[\textbf{Frequent Recurrence from Invariant Measures}]\label{Lem:Sophie}
	Let $T:X\rightarrow X$ be a continuous map on a second-countable space $X$ and let $\mu$ be a $T$-invariant probability measure on $X$. Then $\mu(\FRec(T))=1$ and in particular we have the inclusion
	\[
	\supp(\mu) \subset \cl{\FRec(T)}.
	\]
\end{lemma}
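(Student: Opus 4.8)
The plan is to derive frequent recurrence of $\mu$-almost every point from Birkhoff's pointwise ergodic theorem, using the second countability of $X$ to control all neighbourhoods at once through a fixed countable base.

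First I fix a countable base $(V_k)_{k \in \NN}$ of the topology of $X$. For each $k$ the indicator $\1_{V_k}$ lies in $L^1(\mu)$, since $\mu$ is a probability measure, so Birkhoff's pointwise ergodic theorem applies to the measure-preserving system $(X,\Bi(X),\mu,T)$: for $\mu$-almost every $x$ the averages $\frac{1}{N+1}\sum_{n=0}^N \1_{V_k}(T^n x)$ converge to $g_k(x) := \mathbb{E}[\1_{V_k} \mid \mathcal{I}](x)$, where $\mathcal{I}$ is the sub-$\sigma$-algebra of $T$-invariant Borel sets. Because $\card(N(x,V_k)\cap[0,N]) = \sum_{n=0}^N \1_{V_k}(T^n x)$, on the full-measure set where this limit exists the lower density equals the limit, so $\dinf(N(x,V_k)) = g_k(x)$.

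The key step is to show that $B_k := V_k \cap \{g_k = 0\}$ is $\mu$-null. Since $g_k$ is $\mathcal{I}$-measurable, the set $Z_k := \{g_k = 0\}$ belongs to $\mathcal{I}$, and the defining property of conditional expectation then yields
\[
\mu(B_k) = \int_{Z_k} \1_{V_k}\, d\mu = \int_{Z_k} g_k \, d\mu = 0,
\]
the last equality because $g_k$ vanishes identically on $Z_k$. This is essentially the only place where the $T$-invariance of $\mu$ is really used, and I expect it to be the conceptual heart of the argument.

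Finally I let $N$ be the union of the countably many $\mu$-null sets on which some $g_k$ fails to be the Birkhoff limit, together with all the sets $B_k$; then $\mu(N) = 0$. I claim every $x \notin N$ lies in $\FRec(T)$: given any neighbourhood $U$ of such an $x$, choose $k$ with $x \in V_k \subset U$; since $x \notin B_k$ we get $g_k(x) > 0$, and since $x \notin N$ the limit exists, so $\dinf(N(x,U)) \ge \dinf(N(x,V_k)) = g_k(x) > 0$. Hence $X \setminus N \subset \FRec(T)$ and $\mu(\FRec(T)) = 1$. The support inclusion is then routine: a full-measure set is dense in the support, for if $x \in \supp(\mu)$ and $U$ is any neighbourhood of $x$, then $\mu(U)>0$ forces $U \cap \FRec(T) \neq \varnothing$, whence $x \in \cl{\FRec(T)}$.
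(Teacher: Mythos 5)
Your proposal is correct, but it follows a genuinely different route from the paper's proof. The paper argues as follows: for an arbitrary Borel set $B$ with $\mu(B)>0$, it invokes the Ergodic Decomposition theorem to produce a $T$-invariant probability measure $m$ with respect to which $T$ is \emph{ergodic} and $m(B)>0$; in the ergodic setting the Birkhoff limits are the constants $m(U_n)$, and the role that your sets $B_k$ play is instead played by intersecting the full-measure sets $A_n$ with $\supp(m)$ (an open basic set meeting $\supp(m)$ has positive $m$-measure). This shows $\FRec(T)$ meets every positive-measure Borel set, whence $\mu(\FRec(T))=1$. You avoid ergodic decomposition entirely: you apply the non-ergodic form of Birkhoff's theorem directly to $\mu$, with limit the conditional expectation $g_k=\mathbb{E}[\1_{V_k}\mid\mathcal{I}]$, and your key step --- that $V_k\cap\{g_k=0\}$ is $\mu$-null because $\{g_k=0\}$ is an invariant set on which one can test the defining property of conditional expectation --- is exactly the ``local ergodicity'' that replaces the decomposition. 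What each approach buys: the paper's reduction to ergodic measures makes the limit functions constant and the geometry transparent, but ergodic decomposition is a nontrivial result whose standard formulations require a standard Borel (e.g.\ Polish) structure, which is stronger than the bare ``second-countable'' hypothesis in the statement; your argument uses only Birkhoff's theorem for a general measure-preserving system plus Radon--Nikodym, so it is more self-contained and proves the lemma in the stated generality. One point both treatments leave implicit, and which you could note in one line, is that $\FRec(T)$ is Borel (writing $\FRec(T)=\bigcap_k\bigl[(X\setminus V_k)\cup\{x: \dinf(N(x,V_k))>0\}\bigr]$ and observing that $x\mapsto\dinf(N(x,V_k))$ is a countable $\liminf$ of Borel functions); alternatively, your set $X\setminus N$ is already a Borel full-measure subset of $\FRec(T)$, which is all that the support inclusion needs.
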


The above result is the recurrence version of \cite[Corollary 5.5]{BaMa2009}, and since recurrence is a local property the measure is not required to be with full support, condition under which the map $T$ would clearly be frequently recurrent.

\begin{proof}[Proof of Lemma \ref{Lem:Sophie}]
	Let $B \in \Bi(X)$ be an arbitrary but fixed Borel set with $\mu(B)>0$. By the Ergodic Decomposition theorem (see \cite[Theorem 3.42]{Glasner2003}) there is a $T$-invariant probability measure $m$ on $X$ for which $T$ is an ergodic map and such that $m(B)>0$. Let $(U_n)_{n\in\NN}$ be a countable basis of the topology and apply the Birkhoff's Pointwise Ergodic theorem (see \cite[Theorem~3.41]{Glasner2003}) to each of the indicator functions $\1_{U_n}$. This yields
	\begin{eqnarray}
		\dens(N(x,U_n)) &=& \lim_{N \to \infty} \frac{\card(N(x,U_n) \cap [0,N])}{N+1} = \lim_{N \to \infty} \frac{1}{N+1} \sum_{k=0}^{N} \1_{U_n}(T^kx) \nonumber\\[10pt]
		&=& \int_X \ \1_{U_n} \ dm = m(U_n), \nonumber
	\end{eqnarray}
	for $m$-a.e. point $x \in X$, that is, for each $n \in \NN$ there is a set $A_n \subset X$ with $m(A_n)=1$ such that $\dens(N(x,U_n))=m(U_n)$ for every $x \in A_n$. Since a countable union of null sets is again null, the set
	\[
	A := \supp(m) \cap \left(\bigcap_{n\in\NN} A_n \right)
	\]
	satisfies $m(A)=1$. We claim that $A \subset \FRec(T)$. Indeed, for every $x \in A$ and every neighbourhood $U$ of $x$ there is an integer $n\in\NN$ such that $x \in U_n \subset U$. Since $A \subset \supp(m)$ we have that $U_n \cap \supp(m)\neq\varnothing$ and hence
	\[
	\dinf(N(x,U)) \geq \dinf(N(x,U_n)) = m(U_n) > 0.
	\]
	The arbitrariness of the neighbourhood $U$ of $x$ implies that $x \in \FRec(T)$. Now, since $m(A)=1$ and $m(B)>0$ we obtain $A\cap B\neq\varnothing$ and hence
	\[
	\FRec(T)\cap B \neq \varnothing.
	\]
	Since this is true for every set $B \in \Bi(X)$ with $\mu(B)>0$ we deduce that $\mu(\FRec(T))=1$. Then $\mu(\cl{\FRec(T)})=1$ and in particular, since $\supp(\mu)$ is the smallest closed subset of $X$ with full $\mu$-measure, we get that
	\[
	\supp(\mu) \subset \cl{\FRec(T)}.\qedhere
	\]
\end{proof}

\begin{remark}
	Lemma~\ref{Lem:Sophie} improves \cite[Theorem~3.3]{Furstenberg1981} in terms of frequent recurrence by using the Birkhoff's Pointwise Ergodic theorem. Indeed, under the assumptions of Lemma~\ref{Lem:Sophie}, \cite[Theorem~3.3]{Furstenberg1981} shows that $\mu$-a.e. point is recurrent, i.e. $\mu(\Rec(T))=1$.
\end{remark}

Combining Theorem \ref{The:Main} and Lemma \ref{Lem:Sophie} we deduce the following result:

\begin{theorem}[\textbf{From Reiterative to Frequent Recurrence}]\label{The:Polish}
	Let $(X,T)$ be a Polish dynamical system, denote by $\tau_X$ the initial topology of $X$ and assume that $X$ is endowed with a Hausdorff topology $\tau$ which fulfills {\em(I)}, {\em(II)}, and {\em(III*)}. Then we have the equality
	\[
	\cl{\FRec(T)}^{\tau_X} = \cl{\RRec(T)}^{\tau_X}.
	\]
	Moreover:
	\begin{enumerate}[{\em(a)}]
		\item The following statements are equivalent:
		\begin{enumerate}[{\em(i)}]
			\item $\FRec(T) \neq \varnothing$;
			
			\item $\UFRec(T) \neq \varnothing$;
			
			\item $\RRec(T) \neq \varnothing$;
			
			\item $T$ admits an invariant probability measure.
		\end{enumerate}
		
		\item The following statements are equivalent:
		\begin{enumerate}[{\em(i)}]
			\item $T$ is frequently recurrent;
			
			\item $T$ is $\Uc$-frequently recurrent;
			
			\item $T$ is reiteratively recurrent;
			
			\item $T$ admits an invariant probability measure with full support.
		\end{enumerate}
	\end{enumerate}
\end{theorem}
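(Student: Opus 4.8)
The plan is to combine the two results just established. Theorem \ref{The:Main} produces, from a prescribed reiteratively recurrent point, a $T$-invariant probability measure whose support contains that point, while Lemma \ref{Lem:Sophie} shows that the support of any $T$-invariant probability measure is contained in $\cl{\FRec(T)}^{\tau_X}$. The one bookkeeping issue to track throughout is that the measures coming out of Theorem \ref{The:Main} are built using the auxiliary topology $\tau$ (needed for compactness), whereas Lemma \ref{Lem:Sophie} is applied with the genuine Polish topology $\tau_X$. The bridge is condition (III), which gives $\Bi(X,\tau)=\Bi(X,\tau_X)$: every such measure is therefore also a Borel probability measure for $\tau_X$, the map $T$ is $\tau_X$-continuous, and $(X,\tau_X)$ is second-countable, so Lemma \ref{Lem:Sophie} applies verbatim.

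For the equality $\cl{\FRec(T)}^{\tau_X}=\cl{\RRec(T)}^{\tau_X}$, one inclusion is free from $\FRec(T)\subset\RRec(T)$. For the reverse I would fix $x_0\in\RRec(T)$ and invoke Theorem \ref{The:Main} to obtain a $T$-invariant probability measure $\mu_{x_0}$ with $x_0\in\supp(\mu_{x_0})$, the support understood with respect to $\tau_X$ (the proof of Fact \ref{Fact:2.3.2mu_x0} indeed verifies $\mu_{x_0}(U)>0$ for every $\tau_X$-neighbourhood $U$ of $x_0$). Lemma \ref{Lem:Sophie} then gives $\supp(\mu_{x_0})\subset\cl{\FRec(T)}^{\tau_X}$, whence $x_0\in\cl{\FRec(T)}^{\tau_X}$. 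Since $x_0$ was arbitrary, $\RRec(T)\subset\cl{\FRec(T)}^{\tau_X}$, and taking closures yields the missing inclusion.

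Parts (a) and (b) then run in parallel. In (a) the implications (i)$\Rightarrow$(ii)$\Rightarrow$(iii) are immediate from $\FRec(T)\subset\UFRec(T)\subset\RRec(T)$; for (iii)$\Rightarrow$(iv) I would feed any $x_0\in\RRec(T)$ to Theorem \ref{The:Main} to get a $T$-invariant probability measure; and for (iv)$\Rightarrow$(i), any such measure $\mu$ satisfies $\mu(\FRec(T))=1$ by Lemma \ref{Lem:Sophie}, so $\FRec(T)\neq\varnothing$, closing the cycle. Part (b) is the ``dense'' version of the same loop: (i)$\Rightarrow$(ii)$\Rightarrow$(iii) follow from the same inclusions since density passes to larger sets; (iii)$\Rightarrow$(iv) is the ``moreover'' clause of Theorem \ref{The:Main}, giving a $T$-invariant measure with full $\tau_X$-support (its $\tau_X$-support contains a countable $\tau_X$-dense set by construction); and (iv)$\Rightarrow$(i) follows because a full-support $\mu$ forces $X=\supp(\mu)\subset\cl{\FRec(T)}^{\tau_X}$ via Lemma \ref{Lem:Sophie}, i.e. $\FRec(T)$ is $\tau_X$-dense.

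The only genuine obstacle, and the point I would be most careful about, is precisely this interplay between $\tau$ and $\tau_X$: one must ensure that the distinguished point (resp. the countable dense set) really lies in the $\tau_X$-support of the constructed measure, and not merely in the possibly larger $\tau$-support, so that the output of Theorem \ref{The:Main} is a legitimate input to Lemma \ref{Lem:Sophie}. Once this is pinned down, each of the four implications in (a) and in (b), together with the displayed equality, reduces to a one-line composition of the two cited results.
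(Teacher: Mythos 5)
Your proposal is correct and follows essentially the same route as the paper: one inclusion is free from $\FRec(T)\subset\UFRec(T)\subset\RRec(T)$, the reverse inclusion and the implications (iii)$\Rightarrow$(iv), (iv)$\Rightarrow$(i) in both (a) and (b) come from composing Theorem \ref{The:Main} with Lemma \ref{Lem:Sophie}, using that a separable metrizable space is second-countable. Your extra care about the $\tau$ versus $\tau_X$ bookkeeping (that the support statement in Theorem \ref{The:Main} is genuinely with respect to $\tau_X$, and that condition (III) makes the constructed measure a Borel measure for $\tau_X$) is exactly what the paper's construction guarantees, so no gap remains.
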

\begin{proof}
	By definition we always have $\FRec(T) \subset \UFRec(T) \subset \RRec(T)$, so we just have to show that
	\[
	\RRec(T) \subset \cl{\FRec(T)}^{\tau_X}.
	\]
	Suppose that $\RRec(T)\neq\varnothing$. Given any $x_0 \in \RRec(T)$, by Theorem \ref{The:Main} one can find a $T$-invariant probability measure $\mu_{x_0}$ on $X$ for which $x_0 \in \supp(\mu_{x_0})$. Since separable and metrizable spaces are second-countable, Lemma \ref{Lem:Sophie} implies that $x_0 \in \cl{\FRec(T)}^{\tau_X}$.\\[-5pt]
	
	Moreover, in both cases (a) and (b) we have: (i) implies (ii) which implies (iii) by definition; (iii) implies (iv) by Theorem \ref{The:Main}; and (iv) implies (i) by Lemma \ref{Lem:Sophie}.
\end{proof}

As we already mentioned in the Introduction, this result is false for general Polish dynamical systems: there exist even reiteratively hypercyclic operators on $c_0(\NN)$ without any non-zero $\Uc$-frequently recurrent vector (see \cite[Theorem 5.7 and Corollary 5.8]{BoGrLoPe2020}). Working with linear dynamical systems implies a reformulation of the above result, which is Theorem~\ref{The:Banach}.

\subsection{Proof of Theorem \ref{The:Banach}}

Let $T:X\rightarrow X$ be an adjoint operator on a separable dual Banach space $X$. Denote by $\tau_{\|\cdot\|}$ the norm topology, consider the weak-star topology $w^*$ and note that:
\begin{enumerate}[(I)]
	\item since $T$ is an adjoint operator, it is a continuous self-map of $(X,w^*)$;
	
	\item by the definition of the topologies, we have $w^* \subset \tau_{\|\cdot\|}$;
	
	\item[(III*)] by the Alaoglu-Bourbaki's theorem, the translation of the family of closed balls centred at $0$ is a $\tau_{\|\cdot\|}$-neighbourhood basis consisting of $w^*$-compact sets. 
\end{enumerate}
If $T:X\rightarrow X$ is an operator on a separable reflexive Banach space $X$ the same conditions hold for the weak topology. From here one can apply the same arguments as those used in the proof of Theorem~\ref{The:Polish}. In particular, if we consider a point $x_0 \in \RRec(T)\setminus\{0\}$ then the measure $\mu_{x_0}$ obtained by Theorem \ref{The:Main} is a non-trivial invariant probability measure.\QEDh

\begin{remark}\label{Rem:Non-SeparableI}
	The equality $\cl{\FRec(T)} = \cl{\RRec(T)}$ and hence the equivalences (i) $\Leftrightarrow$ (ii) $\Leftrightarrow$ (iii) established in Theorem \ref{The:Banach} are still true when the underlying space $X$ is a non-separable reflexive Banach space. Indeed, given an operator $T:X\rightarrow X$ on a non-separable reflexive Banach space $X$, and given a point $x_0 \in \RRec(T)$ we can consider the separable closed $T$-invariant subspace
	\[
	Z := \cl{\lspan(\orb(x_0,T))},
	\]
	which is again reflexive. Then $T\res_Z : Z \rightarrow Z$ is an operator on a separable reflexive Banach space. Moreover, recurrence is a local property, i.e. for each Furstenberg family $\Fc$ we have the equality:
	\[
	\Fc\Rec(T\res_Z) = \Fc\Rec(T) \cap Z.
	\] 
	Applying Theorem \ref{The:Banach} to $T\res_Z$ we have
	\[
	x_0 \in \RRec(T\res_Z) \text{ and hence } x_0 \in \cl{\FRec(T\res_Z)} \subset \cl{\FRec(T)}.
	\]
	However, we cannot say the same about statement (iv) of Theorem \ref{The:Banach} since separability is essential to construct and extend the invariant measures onto the whole space. The above arguments are also restricted to the reflexive case because closed subspaces of a dual Banach space are not necessarily dual Banach spaces (consider $c_0(\NN) \subset \ell^{\infty}(\NN)$).
\end{remark}

\section{From uniform recurrence to unimodular eigenvectors}\label{Sec:4EigfromURec}

Our aim in this section is to connect some recurrence properties (stronger than those considered in Sections \ref{Sec:2IMfromRRec} and \ref{Sec:3FRecfromRRec}), for linear dynamical systems on {\em complex} Banach spaces, to the existence of unimodular eigenvectors. This investigation is motivated by the fact that, given a {\em complex linear map} $T:X\rightarrow X$ on a {\em complex topological vector space} $X$, the linear span of its unimodular eigenvectors $\Ec(T)$ consists of $\Del^*$-recurrent vectors. It is shown in \cite[Lemma~7.1 and Corollary~7.2]{BoGrLoPe2020} that they are $\IP^*$-recurrent, and in fact, the same arguments hold by using \cite[Proposition 9.8]{Furstenberg1981} applied to the Kronecker system consisting of the compact group $\TT^k$ and the (left) multiplication $(z_1,...,z_k) \mapsto (\lambda_1z_1,...,\lambda_kz_k)$ for a fixed $k$-tuple $(\lambda_1,...,\lambda_k) \in \TT^k$. We give an alternative proof via invariant measures:

\begin{proposition}\label{Pro:unimodular}
	Let $T:X\rightarrow X$ be a complex linear map on a complex topological vector space $X$. A linear combination
	of unimodular eigenvectors $\Ec(T)$ is a $\Del^*$-recurrent vector, i.e. $\lspan(\Ec(T)) \subset \Del^*\Rec(T)$.
\end{proposition}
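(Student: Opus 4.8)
The plan is to reduce the general statement to a clean spectral computation and then bootstrap to finite linear combinations. First I would recall what must be shown: given $x = \sum_{j=1}^k c_j e_j$ where each $e_j \in \ker(\lambda_j - T)\setminus\{0\}$ with $\lambda_j \in \TT$, and given any neighbourhood $U$ of $x$, I must verify that $N(x,U) \in \Del^*$, i.e. that $N(x,U)$ meets every $\Del$-set. Since $T e_j = \lambda_j e_j$, the orbit is explicit:
\[
T^n x = \sum_{j=1}^k c_j \lambda_j^n e_j.
\]
Thus the orbit lives entirely inside the finite-dimensional subspace $V := \operatorname{span}\{e_1,\dots,e_k\}$, and its closure is governed by the dynamics of the tuple $(\lambda_1^n,\dots,\lambda_k^n)$ on the torus $\TT^k$. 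The whole problem is therefore really about the rotation $R:(z_1,\dots,z_k)\mapsto(\lambda_1 z_1,\dots,\lambda_k z_k)$ on $\TT^k$, which is exactly the Kronecker system flagged in the paragraph preceding the statement.

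\textbf{Transporting the question to the torus via an invariant measure.}
The natural measure-theoretic route—consistent with the paper's ``approach via invariant measures''—is to equip $\TT^k$ with normalized Haar measure $m$ on the closed subgroup $G := \overline{\{(\lambda_1^n,\dots,\lambda_k^n):n\in\NN_0\}}$, which is $R$-invariant and under which $R$ acts as rotation by a topological generator, hence is uniquely ergodic. The key classical input I would invoke is that for a minimal (uniquely ergodic) rotation, every return set to a neighbourhood of a point is a $\Del^*$-set; equivalently, I would apply \cite[Proposition~9.8]{Furstenberg1981} directly to this Kronecker system, exactly as the paper suggests. Concretely, the map $\Phi:G\to V$ sending $(z_1,\dots,z_k)\mapsto\sum_j c_j z_j e_j$ is continuous and intertwines $R$ with $T\res_V$, so $T^n x = \Phi(R^n\mathbf{1})$ where $\mathbf{1}=(1,\dots,1)$. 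A neighbourhood $U$ of $x=\Phi(\mathbf{1})$ pulls back to a neighbourhood $W=\Phi^{-1}(U)$ of $\mathbf{1}$ in $G$, and one checks $N(x,U)\supset N_R(\mathbf{1},W)$, so it suffices that the return set of the rotation be a $\Del^*$-set.

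\textbf{The eigenvector computation and the main obstacle.}
For the rotation, the $\Del^*$ property follows because $R$ is a rotation on a compact abelian group: given any $\Del$-set $\{b_i-b_j\}$ coming from an infinite $B\subset\NN_0$, by compactness the points $R^{b_i}\mathbf{1}$ have a convergent subsequence, so $R^{b_i-b_j}\mathbf{1}\to\mathbf{1}$ along suitable pairs, forcing $N_R(\mathbf{1},W)$ to intersect $(B-B)\cap\NN$; this is the content of \cite[Proposition~9.8]{Furstenberg1981}. I expect the main obstacle to be purely bookkeeping rather than conceptual: verifying that a genuine \emph{neighbourhood} $U$ of the specific vector $x$ (not of an arbitrary orbit point) pulls back to a set $W$ large enough that the abstract $\Del^*$-recurrence of $\mathbf{1}$ in $G$ transfers back, and handling the case where the $\lambda_j$ are not distinct or the $c_j$ vanish (which merely shrinks $V$ and $G$ harmlessly). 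An alternative self-contained argument avoiding Furstenberg would use the invariant-measure machinery of the paper: push forward $m$ under $\Phi$ to get a $T$-invariant measure on $V$ whose support contains $x$, but since $\Del^*$-recurrence is a \emph{pointwise} property stronger than what Lemma~\ref{Lem:Sophie} yields, the direct rotation argument is cleaner and is the one I would present.
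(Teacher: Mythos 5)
Your proposal is correct, but it follows a genuinely different route from the proof the paper gives. The paper works one eigenvalue at a time: for each $\lambda \in \TT$ it applies the $\Del^*$-version of Poincar\'e's Recurrence theorem (see \cite[Theorem~3.2 and Page~177]{Furstenberg1981}) to the single rotation $R_\lambda$ on $\TT$ with Haar measure, concluding that $\{ n \in \NN : |\lambda^n-1|<\eps \} \in \Del^*$; a linear combination $x=\sum_{j=1}^k c_j e_j$ is then handled by intersecting the $k$ sets $\{ n : |\lambda_j^n-1|<\eps \}$, which requires the nontrivial fact that $\Del^*$ is a filter (cited from \cite{BerDown2008}), and the conclusion follows from continuity of the vector space operations. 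You instead bundle all the eigenvalues into one Kronecker rotation on $\TT^k$, establish $\Del^*$-recurrence of the identity for that single group rotation via the compactness/difference argument (this is exactly \cite[Proposition~9.8]{Furstenberg1981}, the route the paper flags in the sentence preceding the proposition but then declines in favour of its ``alternative proof via invariant measures''), and transfer back through the continuous equivariant map $\Phi$. Your approach buys independence from the filter property of $\Del^*$ --- the finite intersection is absorbed into the product system --- and from the measure-theoretic strengthening of Poincar\'e; indeed the Haar measure and unique ergodicity you set up are never actually used, since your recurrence argument (a convergent subsequence of $(R^{b_i}\mathbf{1})_i$, whose differences return near the identity) is purely topological. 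The paper's approach buys brevity and coherence with its invariant-measure theme. Two small points to tighten in your write-up: the paper's definition of the dual family $\Fc^*$ requires its members to be \emph{infinite}, so you should note that $N_R(\mathbf{1},W)$ is infinite (it is syndetic, or alternatively run your difference argument against $\Del$-sets avoiding any prescribed finite set); and the subgroup structure of $G$ is superfluous --- the difference $R^{b_j-b_i}\mathbf{1}$ is estimated directly in $\TT^k$ with its invariant metric, so you may as well work in $\TT^k$ itself.
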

\begin{proof}
	Given $\lambda \in \TT$ let $R_{\lambda}:\TT \rightarrow \TT$ be the $\lambda$-rotation map where $z \mapsto \lambda z$. Then given $\eps>0$, since the Haar measure on $\TT$ is a $R_{\lambda}$-invariant measure with full support, by the Poincar\'e's Recurrence theorem (see \cite[Theorem 3.2 and Page 177]{Furstenberg1981}) there is $A \in \Del^*$ such that for the set $B(1,\eps/2) := \{ z \in \TT : |1-z|<\eps/2 \}$ we have
	\[
	R_{\lambda}^n(B(1,\eps/2)) \cap B(1,\eps/2) \neq \varnothing \text{ for every } n \in A.
	\]
	By the triangular inequality we get $|\lambda^n-1| < \eps$ for each $n \in A$ and hence
	\[
	\Del^* \ni A \subset \{ n \in \NN : |\lambda^n-1|<\eps \} \text{ so } \{ n \in \NN : |\lambda^n-1|<\eps \} \in \Del^*.
	\]
	Since the Furstenberg family $\Del^*$ is a filter (see \cite{BerDown2008}) and $\lambda \in \TT$ and $\eps>0$ were chosen arbitrarily the proof is finished.
\end{proof}

Hence, given a complex linear dynamical system $T:X\rightarrow X$ we will always have:
\[
\lspan(\Ec(T)) \subset \Del^*\Rec(T) \subset \IP^*\Rec(T) \subset \URec(T) \subset \RRec^{bo}(T).
\]
Our goal is now to prove Theorem \ref{The:Hilbert}, which states that for any operator acting on a complex Hilbert space, the existence of a non-zero reiteratively recurrent vector with bounded orbit, and in particular the existence of a uniformly recurrent vector, implies the existence of a unimodular eigenvector. The proof of Theorem~\ref{The:Hilbert} relies heavily on the machinery of {\em Gaussian measures} on (complex separable) Hilbert spaces. We begin by recalling some basic facts concerning these Gaussian measures, as well as some deeper results pertaining to the Ergodic Theory of Gaussian linear dynamical systems. We refer the reader to one of the references \cite{ChoTarVak1987} or \cite{DiJaTo1995} for more about Gaussian measures on Banach spaces, and to \cite{BaMa2009} and \cite{BaMa2016} for more on their role in linear dynamics.

\subsection{Ergodic Theory for linear dynamical systems and Gaussian measures}

The study of Ergodic Theory in the framework of linear dynamics started with the pioneering work of Flytzanis (see \cite{Flyztanis1994,Flyztanis1995}), and was then further developed in the papers \cite{BaGri2006}, \cite{BaGri2007} and \cite{BaMa2016}, among others, focusing on the existence of invariant {\em Gaussian} measures satisfying some further dynamical properties such as weak/strong mixing.

\begin{definition}
	A Borel probability measure $m$ on a complex Banach space $X$ is said to be a {\em Gaussian measure} if every continuous linear functional $x^* \in X^*$ has a complex Gaussian distribution when considered as a random variable on $(X,\Bi(X),m)$.
\end{definition}

It is now well understood that the dynamics of a linear dynamical system $(X,T)$ are closely related to the properties of the unimodular eigenvectors of $T$. The situation is especially well understood in the Hilbertian setting, since the existence of an invariant Gaussian measure (with full support, or with respect to which $T$ is ergodic or weakly/strongly mixing) can be fully characterized in terms of the properties of the set $\Ec(T)$. See \cite{BaGri2006} and~\cite{BaMa2009} for details. These characterizations do not hold true, in general, in the Banach space setting, but still many results are preserved allowing for a rather through understanding of Ergodic Theory of linear dynamical systems in this Gaussian framework. See \cite{BaGri2007}, \cite{BaMa2009} and \cite{BaMa2016} for details. Even though Gaussian measures are an essential tool for our proof of Theorem~\ref{The:Hilbert} (see Lemma~\ref{Lem:Eigenvectors} below), the properties that such measures (may) have are properties that arbitrary probability measures can have too. We introduce these properties following \cite{ChoTarVak1987}:

\begin{definition}
	Let $\mu$ be a probability measure on a Banach space $X$:
	\begin{enumerate}[(a)]
		\item suppose that there exists an element $x \in X$ such that
		\[
		\int_X \ep{x^*}{z} d\mu(z) = \ep{x^*}{x} \quad \text{ for every } x^* \in X^*,
		\]
		then $x$ is called the {\em expectation}
		of the measure $\mu$, and in this case we will write
		\[
		\int_X z d\mu(z) := x;
		\]
		
		\item we say that $\mu$ is {\em centered} if its expectation exists and it is equal to $0 \in X$;
		
		\item we say that $\mu$ has a {\em finite second-order moment}, if
		\[
		\int_X \|z\|^2 d\mu(z) < \infty.
		\]
	\end{enumerate}	
\end{definition} 

If $\mu$ has a finite second-order moment then its expectation (called the {\em Pettis integral} of~$\mu$) exists (see \cite[Page 55]{DiUh1977}). Given a centered probability measure $\mu$ on $X$ with a finite second-order moment, following \cite[Page~169]{ChoTarVak1987} and \cite[Theorem~5.9]{BaMa2009}, we can define the {\em covariance operator} of such a measure $\mu$ as the bounded linear operator $R:X^*\rightarrow X$ satisfying
\[
\ep{y^*}{Rx^*} = \int_X \ep{y^*}{z} \ep{x^*}{z} d\mu(z)
\]
for every pair of elements $x^*$ and $y^*$ of $X^*$. In other words,
\begin{equation}\label{eq:covarianceR}
	Rx^* := \int_X \ep{x^*}{z}z d\mu(z) \quad \text{ for every } x^* \in X^*.
\end{equation}
Any Gaussian measure $m$ on $X$ has a finite second-order moment (see \cite[Exercise~5.5]{BaMa2009}), and since we will consider in this work only centered Gaussian measures, we will always have an associated covariance operator for such a measure $m$.\\[-5pt]

When $H$ is a complex separable Hilbert space, the {\em covariance operator} of a centered probability measure $\mu$ on $H$ with a finite second-order moment is usually defined, in a slightly different way, as the bounded linear operator $S:H\rightarrow H$ for which
\[
\ep{Sx}{y} = \int_H \ep{x}{z} \cl{\ep{y}{z}} d\mu(z) \quad \text{ for every } x,y \in H,
\]
i.e.
\begin{equation}\label{eq:covarianceS}
	Sx := \int_H \ep{x}{z}z d\mu(z) \quad \text{ for every } x \in H.
\end{equation}
Observe that, contrary to \eqref{eq:covarianceR}, in this case $\ep{Sx}{\cdot}:H\rightarrow \CC$ is an anti-linear functional acting on $H$. Also, $S$ is a self-adjoint positive trace-class operator on $H$. It is a standard result (see for instance \cite[Corollary 5.15]{BaMa2009}) that the Gaussian covariance operators on $H$ are exactly the positive trace-class operators on $H$, i.e. for such an operator $S$ there exists a Gaussian measure $m$ on $H$ for which we also have that
\[
\ep{Sx}{y} = \int_H \ep{x}{z} \cl{\ep{y}{z}} dm(z) \quad \text{ for every } x,y \in H.
\]
The possibility of constructing a Gaussian measure $m$ with the same covariance operator as $\mu$, together with the fact that the support of the Gaussian measure $m$ is exactly $\cl{S(H)}$ (i.e. the closed linear span of the range of its covariance operator, see \cite[Proposition 5.18]{BaMa2009}) is the key to prove the following lemma, inspired from the pioneering work \cite{Flyztanis1995} of Flytzanis. This lemma is crucial for the proof of Theorem~\ref{The:Hilbert}.

\begin{lemma}[\textbf{Unimodular Eigenvectors from Invariant Measures}]\label{Lem:Eigenvectors}
	Let $T \in \Lc(H)$, where $H$ is a complex separable Hilbert space, and let $\mu$ be a (non-trivial) $T$-invariant probability measure on $H$ such that $\int_H \|z\|^2 d\mu(z) < \infty$. Then we have the inclusions
	\[
	\supp(\mu) \subset \cl{\lspan(\supp(\mu))} \subset \cl{\lspan(\Ec(T))}.
	\]
\end{lemma}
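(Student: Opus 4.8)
The plan is to reduce the statement to a Gaussian situation, where the support is understood explicitly in terms of the covariance operator, and then extract unimodular eigenvectors from an invariance relation for that operator.

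The first inclusion, $\supp(\mu) \subset \cl{\lspan(\supp(\mu))}$, is trivial since the span of any set contains the set. The real content is the second inclusion. I would organize the argument in three steps. \emph{Step 1: Reduce to a centered measure.} Since $\mu$ has a finite second-order moment, its expectation $x_0 := \int_H z\, d\mu(z)$ exists (the Pettis integral). By $T$-invariance one checks $Tx_0 = x_0$, so $x_0$ is a fixed point; if $x_0 \neq 0$ it is already a unimodular eigenvector (eigenvalue $1$). Translating $\mu$ by $-x_0$ produces a centered measure $\tilde\mu$ that is invariant under the affine map $z \mapsto Tz$, and one must check that the relevant spanning statements are unaffected by this translation. \emph{Step 2: Pass to a Gaussian measure with the same covariance.} Let $S:H \to H$ be the covariance operator of the centered measure, defined by \eqref{eq:covarianceS}. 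As recalled in the excerpt, $S$ is a positive trace-class operator, and every such operator is the covariance of some centered Gaussian measure $m$ on $H$ whose support is exactly $\cl{S(H)}$. The key point is that $\supp(\mu) \subset \cl{S(H)}$: this follows because $\ep{Sx}{x} = \int_H |\ep{x}{z}|^2\, d\mu(z)$, so if $x \perp S(H)$ then $\ep{x}{z} = 0$ for $\mu$-a.e.\ $z$, forcing $\supp(\mu)$ into the closed hyperplane $x^\perp$; intersecting over a countable dense family of such $x$ yields $\supp(\mu) \subset \cl{S(H)} = \supp(m)$.

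\emph{Step 3: Derive the invariance relation for $S$ and extract eigenvectors.} The $T$-invariance of $\mu$ (equivalently, the change of variables $z \mapsto Tz$ inside the integral defining $S$) should give the operator identity $T S T^* = S$, i.e.\ $S$ intertwines the dynamics. From this relation one argues spectrally: since $S$ is self-adjoint, positive and compact, it has an orthonormal basis of eigenvectors with positive eigenvalues on $\cl{S(H)}$, and the relation $TST^* = S$ forces $T$ to (essentially) permute the finite-dimensional eigenspaces of $S$ in a way compatible with a unitary action, so that on each such eigenspace $T$ acts like a unitary operator. Diagonalizing that unitary action yields eigenvectors $Tx = \lambda x$ with $|\lambda| = 1$, and these unimodular eigenvectors span a dense subspace of $\cl{S(H)} = \cl{\lspan(\supp(m))}$, which contains $\cl{\lspan(\supp(\mu))}$.

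\textbf{The main obstacle} I anticipate is Step 3, specifically extracting genuinely \emph{unimodular} eigenvectors from the intertwining relation $TST^* = S$. The relation controls $T$ only through the symmetric form $S$, so one must argue carefully that $T$ restricted to $\cl{S(H)}$ is similar (via $S^{1/2}$) to a unitary operator: writing $U := S^{1/2} T^* S^{-1/2}$ on the range, the identity should make $U$ an isometry, and its adjoint eigenvectors pull back to unimodular eigenvectors of $T$. Making this rigorous — handling the unbounded inverse $S^{-1/2}$, the possibly infinite-dimensional eigenspaces, and the density of the resulting eigenvectors — is the delicate part, and is presumably where the authors invoke the deeper Gaussian/Flytzanis machinery referenced in the excerpt. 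I would expect Steps 1 and 2 to be routine measure-theoretic bookkeeping, with all the real weight falling on this final spectral extraction.
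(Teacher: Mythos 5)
Your Steps 1 and 2 are correct, and Step 1 is a legitimate variant of the paper's reduction: since the barycenter $x_0=\int_H z\,d\mu(z)$ satisfies $Tx_0=x_0$, it is either $0$ or itself a unimodular eigenvector (eigenvalue $1$), so translating by $-x_0$ preserves $T$-invariance and the spanning conclusion. (The paper centers differently, by rotation-averaging, $\nu(A):=\int_{\TT}\mu(\lambda A)\,d\lambda$; both reductions work.) Step 2 also matches the paper, whose \emph{Step} 2 proves the stronger equality $\cl{\lspan(\supp(\mu))}=\supp(m)=\cl{S(H)}$.

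The genuine gap is Step 3, and it is not the kind of gap that care with $S^{-1/2}$ can close: the mechanism you propose is false, and so is \emph{any} mechanism that uses only the relation $TST^*=S$ together with the spectral theory of the compact operator $S$. First, $TST^*=S$ does not force $T$ to preserve or permute the eigenspaces of $S$: on $\CC^2$, $S=\operatorname{diag}(1,\tfrac12)$ and $T=\bigl(\begin{smallmatrix}1/\sqrt2&1\\-1/2&1/\sqrt2\end{smallmatrix}\bigr)$ satisfy $TST^*=S$ while $Te_1\notin\CC e_1$. Second, and decisively: on $\ell^2(\NN_0)$ let $T$ be the weighted backward shift $Te_0=0$, $Te_n=\sqrt{(n+1)/n}\,e_{n-1}$ for $n\geq1$, and let $S=\operatorname{diag}\bigl(\tfrac{1}{n+1}\bigr)_{n\geq 0}$. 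Then $S$ is positive, compact, injective with dense range and simple eigenvalues, the identity $TST^*=S$ holds, and $T$ even maps each eigenspace $\CC e_n$ of $S$ into another one --- yet $\Ec(T)=\varnothing$, since an eigenvector for $|\lambda|=1$ would satisfy $|x_n|^2=|x_0|^2/(n+1)$, which is not summable. What fails in this example is exactly that $S$ is not trace class, i.e.\ $S$ is not the covariance of a genuine Gaussian measure; so the trace-class hypothesis must enter the argument analytically, not formally. That is precisely the content of the result the paper's \emph{Step} 3 invokes as a black box, namely Flytzanis's theorem in the form of \cite[Theorem 5.46]{BaMa2009} (an operator on a cotype-2 space admitting an invariant Gaussian measure with full support has a spanning set of unimodular eigenvectors), applied to $m$ on the closed $T$-invariant subspace $\supp(m)$. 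Your fallback intertwining $TS^{1/2}=S^{1/2}U^*$, with $U$ an isometry of $\cl{S(H)}$, is correct as far as it goes, but it cannot be completed elementarily either: $U$ may be a pure shift (then $U^*$ has no unimodular eigenvalues), and even a unitary $U$ may have purely continuous spectrum (bilateral shift), in which case pulling back eigenvectors of $U^*$ produces nothing. In short, what you call the ``delicate part'' is the entire theorem: the paper does not prove it but cites it, and your sketch neither proves it nor correctly reduces to the citation.
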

\begin{proof}
	Suppose first that $\mu$ is a centered measure on $H$. Then, since $H$ is a Hilbert space, the covariance operator $S$ of $\mu$ defined as in \eqref{eq:covarianceS} satisfies
	\[
	\ep{Sx}{y} = \int_H \ep{x}{z} \cl{\ep{y}{z}} d\mu(z) = \int_{\supp(\mu)} \ep{x}{z} \cl{\ep{y}{z}} d\mu(z) \quad \text{ for every } x,y \in H,
	\]
	and by \cite[Corollary 5.15]{BaMa2009}, it is also the covariance operator of a certain Gaussian measure $m$ on $H$. From now on we split the proof in three steps:	
	\begin{enumerate}[{{\em Step}} 1.]
		\item \textit{The Gaussian measure $m$ is $T$-invariant}:
		
		Given $x,y \in H$ we have
		\begin{eqnarray}
			\ep{TST^*x}{y} &=& \ep{ST^*x}{T^*y} = \int_H \ep{T^*x}{z} \cl{\ep{T^*y}{z}} d\mu(z) = \int_H \ep{x}{Tz} \cl{\ep{y}{Tz}} d\mu(z)  \nonumber\\[10pt]
			&=& \int_H \ep{x}{z} \cl{\ep{y}{z}} d(\mu\circ T^{-1})(z) = \int_H \ep{x}{z} \cl{\ep{y}{z}} d\mu(z) = \ep{Sx}{y}, \nonumber
		\end{eqnarray}
		since $\mu$ is $T$-invariant. By \cite[Proposition 5.22]{BaMa2009} we deduce that $m$ is $T$-invariant.
		
		\item \textit{We have the equality $\cl{\lspan(\supp(\mu))} = \supp(m)$}:
		
		By \cite[Proposition 5.18]{BaMa2009} we know that $\supp(m) = \ker(S)^{\perp} = \cl{S(H)}$. Moreover, the subspace $\cl{\lspan(\supp(\mu))}^{\perp}$ is included in the set
		\[
		\hspace{-2.1cm}\left\{ y \in H : \ep{Sx}{y} = \int_{\supp(\mu)} \ep{x}{z} \cl{\ep{y}{z}} d\mu(z) = 0 \text{ for every } x \in H \right\} = \\[-12.5pt]
		\]
		\begin{eqnarray}
			&=& \cl{S(H)}^{\perp} \subset \left\{ y \in H : \int_{\supp(\mu)} |\ep{y}{z}|^2 d\mu(z) = 0 \right\} \nonumber\\[7.5pt]
			&=& \left\{ y \in H : \ep{y}{z} = 0 \text{ for } \mu\text{-a.e. } z \in H \right\} \nonumber\\[7.5pt]
			&\overset{(*)}{=}& \left\{ y \in H : \ep{y}{z} = 0 \text{ for every } z \in \supp(\mu) \right\} = \cl{\lspan(\supp(\mu))}^{\perp}, \nonumber
		\end{eqnarray}
		where the equality $(*)$ follows from the continuity of the maps $\ep{y}{\cdot}:H\rightarrow\CC$.
		
		\item \textit{We have the inclusion $\supp(\mu) \subset \cl{\lspan(\Ec(T))}$}:
		
		In \cite[Theorem 5.46]{BaMa2009} it is stated that {\em if a Banach space $X$ has cotype 2, then every operator in $\Lc(X)$ admitting a Gaussian invariant measure with full support has a spanning set of unimodular eigenvectors}. Since the support of $m$ is a closed linear subspace of $H$ ({\em Step} 2) and every Hilbert space has cotype 2, \cite[Theorem 5.46]{BaMa2009} applied to the $T$-invariant measure $m$ ({\em Step} 1) implies that $\supp(m) \subset \cl{\lspan(\Ec(T))}$, and hence using again {\em Step} 2 we get that
		\[
		\supp(\mu) \subset \cl{\lspan(\supp(\mu))} = \supp(m) \subset \cl{\lspan(\Ec(T))}.
		\]
	\end{enumerate}
	
	Suppose now that $\mu$ is not centered and define the measure
	\[
	\nu(A) := \int_{\TT} \mu(\lambda A) d\lambda \quad \text{ for every Borel set } A \in \Bi(H). 
	\]
	Then $\nu$ is a (non-trivial) probability measure on $H$ and it is $T$-invariant since
	\[
	\nu(T^{-1}(A)) = \int_{\TT} \mu(\lambda T^{-1}(A)) d\lambda = \int_{\TT} \mu(T^{-1}(\lambda A)) d\lambda = \nu(A).
	\]
	Using the density of the simple functions in $L^1(H,\Bi(H),\nu)$ one can show that $\nu$ is centered since
	\[
	\int_H zd\nu(z) = \int_{\TT} \left(\int_H \cl{\lambda} z d\mu(z) \right) d\lambda = \int_{\TT} \cl{\lambda} \left(\int_H z d\mu(z) \right) d\lambda = 0,
	\]
	and also that $\nu$ has a finite second-order moment since
	\[
	\int_H \|z\|^2 d\nu(z) = \int_{\TT} \left(\int_H \|\cl{\lambda}z\|^2 d\mu(z) \right) d\lambda = \int_H \|z\|^2 d\mu(z) < \infty.
	\]
	The first part of the proof implies that $\supp(\nu) \subset \cl{\lspan(\supp(\nu))} \subset \cl{\lspan(\Ec(T))}$ and hence we only have to show that $\supp(\mu) \subset \supp(\nu)$. In order to see this, pick $x_0 \in \supp(\mu)$ and $\eps>0$. Then let $\delta:=\mu(B(x_0,\eps/2))>0$, where $B(x_0,\eps/2)$ denotes the open ball of $X$ centred at $x_0$ and of radius $\eps/2$, and note that $B(x_0,\eps/2) \subset \lambda B(x_0,\eps)$ for any $\lambda \in \TT$ with
	\[
	|\lambda-1| < \frac{\eps}{2(\|x_0\|+1)}.
	\]
	Indeed, given $x \in B(x_0,\eps/2)$ we have that $\|\lambda x_0-x\| \leq \|(\lambda-1)x_0\| + \|x_0-x\| < \eps$. Then for such a $\lambda \in \TT$ we have that $\mu(\lambda B(x_0,\eps)) \geq \delta$ and hence $\nu(B(x_0,\eps))>0$. The arbitrariness of $\eps>0$ implies that $x_0 \in \supp(\nu)$.
\end{proof}

\begin{remark}\label{Rem:co-type2BlackBox}
	If we start the proof of Lemma \ref{Lem:Eigenvectors} with the underlying space being a Banach space $X$ which has type 2, then there exists a Gaussian measure $m$ on $X$ whose covariance operator is $R$, as defined in \eqref{eq:covarianceR}.	Indeed, since $R$ is a symmetric and positive operator it admits a square root: there exist some separable Hilbert space $H$ and an operator $K:H\rightarrow X$ such that $R=KK^*$ (see \cite[Page 101]{BaMa2009}). Moreover, by the finite second-order moment condition of $\mu$, the operator $K^*$ is an absolutely 2-summing operator and hence such a Gaussian measure $m$ on $X$ exists by \cite[Corollary 5.20]{BaMa2009}. However, in the {\em Step} 3 of the proof above the underlying space needs to have cotype 2. Since the only spaces which are both of type 2 and of cotype 2 are those which are isomorphic to a Hilbert space, the proof of Lemma~\ref{Lem:Eigenvectors} does not extend outside of the Hilbertian setting.
\end{remark}

We are now ready to prove Theorem \ref{The:Hilbert}.

\subsection{Proof of Theorem \ref{The:Hilbert}}

Let $T:H\rightarrow H$ be an operator on a complex separable Hilbert space $H$. We already know that $\lspan(\Ec(T)) \subset \URec(T) \subset \RRec^{bo}(T)$, so we just have to prove that
\[
\RRec^{bo}(T) \subset \cl{\lspan(\Ec(T))}.
\]
To see this, let $x_0 \in \RRec^{bo}(T)\setminus\{0\}$ and let $M>0$ be such that $\orb(x_0,T)$ is contained in $MB_H$, the $\|\cdot\|$-closed ball of radius $M$ centred at $0$. If we denote by $w$ the weak topology of $H$, we have the inclusion
\[
\cl{\orb(x_0,T)}^w \subset MB_H.
\]
By Theorem \ref{The:Main} there is a (non-trivial, because $x_0\neq 0$) $T$-invariant probability measure $\mu_{x_0}$ on $X$ such that
\[
x_0 \in \supp(\mu_{x_0}) \subset \cl{\orb(x_0,T)}^w,
\]
and hence
\[
\int_H \|z\|^2 d\mu_{x_0}(z) = \int_{\supp(\mu_{x_0})} \|z\|^2 d\mu_{x_0}(z) \leq M^2 < \infty.
\]
By Lemma~\ref{Lem:Eigenvectors} we get that $x_0 \in \supp(\mu_{x_0}) \subset \cl{\lspan(\Ec(T))}$ as we wanted to show.\\[-5pt]

Suppose now that there is a countable set $\{x_n:n\in\NN\} \subset \RRec^{bo}(T)$ which is dense in $H$. For each $n \in \NN$ pick $M_n>0$ and $k_n \in \NN$ such that
\[
\cl{\orb(x_n,T)}^w \subset M_nB_H \quad \text{ and } \quad 2^nM_n^2 \leq 2^{k_n}.
\]
Applying Theorem \ref{The:Main} to each vector $x_n$ we obtain a sequence $(\mu_{x_n})_{n\in\NN}$ of $T$-invariant probability measures on $H$ such that $x_n \in \supp(\mu_{x_n}) \subset \cl{\orb(x_n,T)}^w$ for each $n \in \NN$. Consider the measure
\[
\mu := \sum_{n\in\NN} \frac{\mu_{x_n}}{2^{k_n}},
\]
which is a (positive) $T$-invariant finite Borel measure on $H$ with full support such that
\[
\int_H \|z\|^2 d\mu(z) = \sum_{n\in\NN} \frac{1}{2^{k_n}} \int_H \|z\|^2 d\mu_{x_n}(z) \leq \sum_{n\in\NN} \frac{M_n^2}{2^{k_n}} \leq 1.
\]
Normalizing $\mu$ we get a $T$-invariant probability measure with full support and finite second-order moment.\\[-5pt]

Moreover, in both cases (a) and (b) we have: (i) $\Rightarrow$ (ii) $\Rightarrow$ (iii) $\Rightarrow$ (iv) $\Rightarrow$ (v) $\Rightarrow$ (vi) $\Rightarrow$ (vii) by definition; (vii) implies (viii) using Theorem \ref{The:Main} as in the above arguments; and (viii) implies (i) by Lemma \ref{Lem:Eigenvectors}.\QEDh

\begin{remark}\label{Rem:Non-SeparableII}
	The equalities $\cl{\lspan(\Ec(T))} = \cl{\URec(T)} = \cl{\RRec^{bo}(T)}$ and hence the equivalences (i) $\Leftrightarrow$ (ii) $\Leftrightarrow$ (iii) $\Leftrightarrow$ (iv) $\Leftrightarrow$ (v) $\Leftrightarrow$ (vi) $\Leftrightarrow$ (vii) established in Theorem \ref{The:Hilbert} are still true when the underlying space $H$ is a complex non-separable Hilbert space. Since the closed subspaces of a Hilbert space are again Hilbert spaces, the same arguments as those used in Remark \ref{Rem:Non-SeparableI} apply. We loose again the measures equivalences, i.e. statement (viii).
\end{remark}

As mentioned in the Introduction and in Remark \ref{Rem:co-type2BlackBox}, the proof of Lemma \ref{Lem:Eigenvectors} and hence that of Theorem \ref{The:Hilbert} do not extend outside of the Hilbertian setting.\\[-5pt]

We finish this section with the proof of Theorem \ref{The:JaDeGli+URec}, which concerns the power-bounded operators on complex reflexive Banach spaces $X$. The proof relies on the splitting theorem of Jacobs-Deleeuw-Glicksberg, and is really specific to the setting of power-bounded operators. We follow the presentation and notation of \cite[Section 2.4]{Krengel1985}: if $\mathscr{S}$ is a semigroup of $\Lc(X)$, we say that $\mathscr{S}$ is {\em weakly almost periodic} if for any $x \in X$ the set $\mathscr{S}x = \{ Sx : S \in \mathscr{S} \}$ has a $w$-compact closure.

\subsection{Proof of Theorem \ref{The:JaDeGli+URec}}

Given a power-bounded operator $T:X\rightarrow X$ on a complex reflexive Banach space $X$, we already know that $\lspan(\Ec(T)) \subset \URec(T)$ so we just have to show the inclusion $\URec(T) \subset \cl{\lspan(\Ec(T))}$. We set $O(x) := \cl{\orb(x,T)}^{w}$ for each $x \in X$.\\[-5pt]

Since $T$ is power-bounded, every $T$-orbit is bounded and has a $w$-compact closure. Hence, by the Jacobs-Deleeuw-Glicksberg theorem \cite[Section 2.4, Theorem 4.4]{Krengel1985} applied to the (weakly almost periodic) abelian semigroup of operators $\{ T^n : n\in\NN_0 \} \subset \Lc(X)$, we obtain that $X = X_{rev} \oplus X_{fl}$ where
\[
X_{rev} := \{ x \in X : y \in O(x) \Rightarrow x \in O(y) \} \quad \text{ and } \quad X_{fl} := \{ x \in X : 0 \in O(x) \}.
\]
Moreover, by the second part of this same theorem \cite[Section 2.4, Theorem~4.5]{Krengel1985} we also get that
\[
X_{rev} = \cl{\lspan(\Ec(T))}.
\]
Let us now show that $\URec(T) \subset X_{rev}$. Indeed, given $x \in \URec(T) \setminus\{0\}$ we can consider the map $T\res_{O(x)}:(O(x),w)\rightarrow (O(x),w)$ which is a $w$-compact dynamical system. Since the weak topology is coarser than the norm topology we have $x \in \URec(T\res_{O(x)})$ and hence by \cite[Theorem 1.17]{Furstenberg1981} the system $T\res_{O(x)}$ is minimal so every $T\res_{O(x)}$-orbit is dense in $O(x)$. Finally, given $y \in O(x)$ we have
\[
O(y) = \cl{\orb(y,T)}^{w} = \cl{\orb(y,T\res_{O(x)})}^{w} = O(x)
\]
which implies that $x \in O(y)$. The arbitrariness of $y \in O(x)$ shows that $x \in X_{rev}$.\QEDh

\section{Product dynamical systems}\label{Sec:5Product}

Given a property of a dynamical system $T:X\rightarrow X$, it is usual to ask whether the product dynamical system $T\times T:X\times X \rightarrow X\times X$ has the same property. Studied cases in linear dynamics are transitivity or hypercyclicity (which gives us the concept of topological weak mixing), and in general $\Fc$-transitivity or $\Fc$-hypercyclicity (see \cite{BMPP2019} and \cite{ErEsMe2021}). Here we show that the above theorems still work for the product systems.

\begin{theorem}[\textbf{From Reiterative to $\symbf{N}$-Dimensional Frequent Recurrence}]
	Let $N\in\NN$ and suppose that for each $1\leq i\leq N$ there is a Polish dynamical system $(X_i,T_i)$ such that $(X_i,\tau_{X_i})$ can be endowed with a Hausdorff topology $\tau_i$ which fulfills {\em(I)}, {\em(II)}, and {\em(III*)} with respect to the map $T_i$ and the topology $\tau_{X_i}$. Then for the product dynamical system $T:(X,\tau_X)\rightarrow (X,\tau_X)$, where $\tau_X$ is the product topology of the $N$-th $\tau_{X_i}$ topologies, we have the equality
	\[
	\cl{\FRec(T)}^{\tau_X} = \prod_{i=1}^N \cl{\RRec(T_i)}^{\tau_{X_i}}.
	\]
	In particular:
	\begin{enumerate}[{\em(a)}]
		\item The following statements are equivalent:
		\begin{enumerate}[{\em(i)}]
			\item $\FRec(T) \neq \varnothing$;
			
			\item $\UFRec(T) \neq \varnothing$;
			
			\item $\RRec(T) \neq \varnothing$;
			
			\item $\RRec(T_i)\neq \varnothing$ for every $1\leq i\leq N$.
		\end{enumerate}
		
		\item The following statements are equivalent:
		\begin{enumerate}[{\em(i)}]
			\item $T$ is frequently recurrent;
			
			\item $T$ is $\Uc$-frequently recurrent;
			
			\item $T$ is reiteratively recurrent;
			
			\item $T_i$ is reiteratively recurrent for every $1\leq i\leq N$.
		\end{enumerate}
	\end{enumerate}
\end{theorem}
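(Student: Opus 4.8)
The plan is to establish the displayed equality by a double inclusion and then read off the equivalences (a) and (b) from it. Write $X := \prod_{i=1}^N X_i$ and $T := T_1 \times \cdots \times T_N$, and let $\pi_i$ denote the $i$-th coordinate projection. I would begin by recording two structural facts about the product system that make the machinery of Sections~\ref{Sec:2IMfromRRec} and~\ref{Sec:3FRecfromRRec} available: the map $T$ is $\tau_X$-continuous, since each $T_i$ is $\tau_{X_i}$-continuous, and $(X,\tau_X)$ is again Polish and hence second-countable, being a finite product of second-countable spaces. In particular Lemma~\ref{Lem:Sophie} applies to $(X,T)$. Notably, I would not need to verify {\em(I)}, {\em(II)}, {\em(III*)} for the product topology $\prod_i \tau_i$: the argument only invokes Theorem~\ref{The:Main} on the individual factors and Lemma~\ref{Lem:Sophie} on the product.

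The ``easy'' inclusion $\cl{\FRec(T)}^{\tau_X} \subset \prod_{i=1}^N \cl{\RRec(T_i)}^{\tau_{X_i}}$ rests on the observation that $\RRec(T) \subset \prod_{i=1}^N \RRec(T_i)$. To see this I would fix $x = (x_1,\dots,x_N) \in \RRec(T)$, an index $i$, and a neighbourhood $U_i$ of $x_i$ in $X_i$, and apply the defining property of reiterative recurrence to the slab neighbourhood $U := U_i \times \prod_{j \neq i} X_j$ of $x$. Since the coordinates $j \neq i$ impose no constraint, one has the return-set identity $N_T(x,U) = N_{T_i}(x_i,U_i)$, so $\Bdsup(N_{T_i}(x_i,U_i)) > 0$ and thus $x_i \in \RRec(T_i)$. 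As $\FRec(T) \subset \RRec(T)$, and since for finite products the closure of a product is the product of the closures, taking closures yields the claimed inclusion.

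The reverse inclusion is the substantive one, and it suffices to prove $\prod_{i=1}^N \RRec(T_i) \subset \cl{\FRec(T)}^{\tau_X}$ and then pass to closures. Given $x = (x_1,\dots,x_N)$ with each $x_i \in \RRec(T_i)$, I would apply Theorem~\ref{The:Main} to each factor $(X_i,T_i)$ --- this is exactly where the hypotheses {\em(I)}, {\em(II)}, {\em(III*)} of the statement are used --- to produce $T_i$-invariant probability measures $\mu_i$ with $x_i \in \supp(\mu_i)$. The product measure $\mu := \mu_1 \otimes \cdots \otimes \mu_N$ is $T$-invariant, and because $\supp(\mu) = \prod_{i=1}^N \supp(\mu_i)$ for finite products one has $x \in \supp(\mu)$. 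Lemma~\ref{Lem:Sophie} applied to $(X,T)$ then gives $\supp(\mu) \subset \cl{\FRec(T)}^{\tau_X}$, so $x \in \cl{\FRec(T)}^{\tau_X}$. I expect this to be the main obstacle, and it is instructive why: one cannot argue combinatorially, because the intersection $\bigcap_{i=1}^N N_{T_i}(x_i,U_i)$ of sets each of positive upper Banach density may itself have zero upper Banach density; it is precisely the passage through a product invariant measure that repairs this defect and forces genuine frequent recurrence on the whole support.

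It then remains to deduce the equivalences, which follow formally. In both (a) and (b) the chain (i)$\Rightarrow$(ii)$\Rightarrow$(iii) is immediate from $\FRec(T) \subset \UFRec(T) \subset \RRec(T)$. For (a), (iii)$\Rightarrow$(iv) is the projection inclusion above, and (iv)$\Rightarrow$(i) follows since choosing $x_i \in \RRec(T_i)$ produces, by the reverse inclusion, a point of $\cl{\FRec(T)}^{\tau_X}$, forcing $\FRec(T) \neq \varnothing$. For (b), (iii)$\Rightarrow$(iv) combines $\pi_i(\RRec(T)) \subset \RRec(T_i)$ with the fact that projections send dense sets to dense sets, while (iv)$\Rightarrow$(i) uses that a finite product of dense sets is dense: if each $\RRec(T_i)$ is dense then $\prod_{i=1}^N \cl{\RRec(T_i)}^{\tau_{X_i}} = X$, and the main equality forces $\cl{\FRec(T)}^{\tau_X} = X$, i.e. $T$ is frequently recurrent.
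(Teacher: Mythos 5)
Your proposal is correct and follows essentially the same route as the paper's own proof: the easy inclusion via coordinate projections, and the substantive inclusion by applying Theorem~\ref{The:Main} to each factor, forming the $T$-invariant product measure whose support contains the given point, and concluding with Lemma~\ref{Lem:Sophie} on the (second-countable) product space. The extra details you supply --- the slab-neighbourhood return-set identity, the product-of-supports identity, and the formal deduction of (a) and (b) --- are exactly the steps the paper leaves implicit, so there is nothing to correct.
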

\begin{proof}
	We clearly have the inclusion
	\[
	\FRec(T) \subset \prod_{i=1}^N \RRec(T_i).
	\]
	Now given $\symbf{x}_0 = (x_1,...,x_N) \in X$ such that $x_i \in \RRec(T_i)$ for each $1\leq i\leq N$, let us show that $\symbf{x}_0 \in \cl{\FRec(T)}^{\tau_X}$. Applying Theorem \ref{The:Main} we obtain a $T_i$-invariant measure $\mu_{x_i}$ on $X_i$ such that $x_i \in \supp(\mu_{x_i})$ for each $1\leq i\leq N$. Since
	\[
	\Bi(X,\tau_X) = \prod_{i=1}^N \Bi(X,\tau_{X_i}),
	\]
	we can consider the product measure $\mu_{\symbf{x}_0} := \prod_{i=1}^N \mu_{x_i}$ on the product space $X$, which is a $T$-invariant measure (see \cite[Theorem 1.1 and Definition 1.2]{Walters1982}) for which $\symbf{x}_0 \in \supp(\mu_{\symbf{x}_0})$. Applying now Lemma \ref{Lem:Sophie} we deduce that $\symbf{x}_0 \in \cl{\FRec(T)}^{\tau_X}$.
\end{proof}

The following immediate corollaries yield a product version of Theorem \ref{The:Banach}:

\begin{corollary}
	Let $N\in\NN$ and consider for each $1\leq i\leq N$ an adjoint operator $T_i:~X_i\rightarrow~X_i$ on a separable dual Banach space $X_i$. Then, for the direct sum operator $T=T_1\oplus \cdots\oplus T_N : X \rightarrow X$ on the direct sum space $X = X_1\oplus\cdots\oplus X_N$, we have the equality
	\[
	\cl{\FRec(T)} = \prod_{i=1}^N \cl{\RRec(T_i)}.
	\]
	In particular, the following statements are equivalent:
	\begin{enumerate}[{\em(i)}]
		\item $T$ is frequently recurrent;
		
		\item $T$ is $\Uc$-frequently recurrent;
		
		\item $T$ is reiteratively recurrent;
		
		\item $T_i$ is reiteratively recurrent for every $1\leq i\leq N$.
	\end{enumerate}
	Moreover, the result holds whenever some of the $T_i$ are operators defined on some reflexive Banach spaces $X_i$.
\end{corollary}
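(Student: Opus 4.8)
The plan is to read the statement off directly from the preceding $N$-dimensional frequent recurrence theorem, the genuine content being only the identification of the direct sum with a product system and the installation of the correct auxiliary topology on each factor. First I would identify $(X,T)$ with the product dynamical system $(X_1\times\cdots\times X_N,\,T_1\times\cdots\times T_N)$: as a separable Banach space (hence a Polish space) the direct sum $X_1\oplus\cdots\oplus X_N$ is, topologically, the finite product $X_1\times\cdots\times X_N$ equipped with the product of the norm topologies, and the direct sum operator $T=T_1\oplus\cdots\oplus T_N$ acts coordinatewise exactly as the product map.

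Next I would check the hypotheses of the product theorem factor by factor. For each $i$, since $X_i$ is a separable dual Banach space and $T_i$ is an adjoint operator, the weak-star topology $\tau_i=w^*$ on $X_i$ fulfills {\em(I)}, {\em(II)} and {\em(III*)} with respect to $T_i$ and the norm topology $\tau_{X_i}$; these are precisely the three observations recorded in the proof of Theorem~\ref{The:Banach} ($w^*$-continuity of an adjoint operator, the coarseness $w^*\subset\tau_{X_i}$, and the Alaoglu--Bourbaki weak-star-compact neighbourhood basis). The $N$-dimensional frequent recurrence theorem then yields
\[
\cl{\FRec(T)}^{\tau_X}=\prod_{i=1}^N\cl{\RRec(T_i)}^{\tau_{X_i}},
\]
where $\tau_X$ is the product of the $\tau_{X_i}$. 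The point I would emphasize is that each $\tau_{X_i}$ appearing here is the \emph{norm} topology of $X_i$ (the coarse topologies $w^*$ are used only inside the theorem, for the measure construction), so their finite product $\tau_X$ is the norm topology of the direct sum $X$; every closure in the display is therefore a norm closure and the equality is exactly the asserted $\cl{\FRec(T)}=\prod_{i=1}^N\cl{\RRec(T_i)}$.

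To obtain the four equivalences I would combine this equality with the universal inclusions $\FRec(T)\subset\UFRec(T)\subset\RRec(T)$ and with the projection inclusion $\RRec(T)\subset\prod_{i=1}^N\RRec(T_i)$. The latter is immediate: if $\symbf{x}=(x_1,\dots,x_N)\in\RRec(T)$ then, for each $j$ and each neighbourhood $U_j$ of $x_j$, the set $X_1\times\cdots\times U_j\times\cdots\times X_N$ is a neighbourhood of $\symbf{x}$ whose return set is exactly $N(x_j,U_j)$, so $\Bdsup(N(x_j,U_j))>0$ and hence $x_j\in\RRec(T_j)$. Taking closures gives $\cl{\RRec(T)}\subset\prod_{i=1}^N\cl{\RRec(T_i)}=\cl{\FRec(T)}$, whence $\cl{\FRec(T)}=\cl{\UFRec(T)}=\cl{\RRec(T)}=\prod_{i=1}^N\cl{\RRec(T_i)}$. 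Density of any one of the first three sets in $X=\prod_{i=1}^N X_i$ is then equivalent to $\prod_{i=1}^N\cl{\RRec(T_i)}=\prod_{i=1}^N X_i$, i.e. to $\cl{\RRec(T_i)}=X_i$ for every $i$, which is statement (iv); this establishes (i)$\Leftrightarrow$(ii)$\Leftrightarrow$(iii)$\Leftrightarrow$(iv).

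For the reflexive addendum I would, on each factor $X_i$ that is only assumed separable reflexive (with $T_i$ an arbitrary operator), simply replace the weak-star topology by the weak topology $w$, which fulfills {\em(I)}, {\em(II)} and {\em(III*)} in that setting as well, again as noted in the proof of Theorem~\ref{The:Banach}; since the product theorem permits a different admissible auxiliary topology on each coordinate, the argument goes through verbatim with this mixed choice. I do not anticipate a real obstacle, as the statement is a specialization of an already-proved theorem. The only delicate point is the bookkeeping of topologies: one must keep the coarse auxiliary topologies ($w^*$ or $w$) confined to the measure-construction step inside the product theorem while taking the outer closures in the norm topology $\tau_X$, so that what emerges is precisely the norm-closure equality of the corollary.
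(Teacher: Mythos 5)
Your proposal is correct and matches the paper's intent exactly: the paper presents this as an immediate corollary of the $N$-dimensional product theorem, obtained by equipping each factor with the weak-star topology (or the weak topology in the reflexive case), which fulfills (I), (II), (III*) as verified in the proof of Theorem~\ref{The:Banach}, and by noting that the product of the norm topologies is the norm topology of the direct sum. Your additional derivation of the four equivalences from the closure equality via the projection inclusion $\RRec(T)\subset\prod_i\RRec(T_i)$ is a valid way to make explicit what the paper reads off from part (b) of that theorem.
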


In the statement above, and whenever we consider a direct sum space $X_1\oplus\cdots\oplus X_N$, one can use any norm defining the usual product topology on $X_1\oplus\cdots\oplus X_N$ (see Theorem~\ref{The:productEIG}).

\begin{definition}
	Let $(X,T)$ be a linear dynamical system and let $n\in\NN$. We will denote by $T_n:X^n\rightarrow X^n$ the {\em $n$-fold direct sum} of $T$ with itself, i.e. the dynamical system
	\[
	T_n := \underbrace{T\oplus\cdots\oplus T}_{n} : \underbrace{X\oplus\cdots\oplus X}_{n} \longrightarrow \underbrace{X\oplus\cdots\oplus X}_{n},
	\]
	where $X^n:=\underbrace{X\oplus\cdots\oplus X}_{n}$ is the {\em $n$-fold direct sum} of $X$ with itself.
\end{definition}

\begin{corollary}
	Let $T:X\rightarrow X$ be an adjoint operator on a separable dual Banach space $X$. Then the following statements are equivalent:
	\begin{enumerate}[{\em(i)}]
		\item for every $n\in\NN$, $T_n$ is frequently recurrent;
		
		\item for every $n\in\NN$, $T_n$ is $\Uc$-frequently recurrent;
		
		\item for every $n\in\NN$, $T_n$ is reiteratively recurrent;
		
		\item $T$ is reiteratively recurrent.
	\end{enumerate}
	In particular, the result holds whenever $T$ is an operator on a reflexive Banach space $X$.
\end{corollary}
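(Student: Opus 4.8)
The plan is to reduce the whole equivalence to the direct-sum corollary just proved, specialized to the case where all summands equal $T$. First I would record the routine implications: for any operator $S$ the inclusions $\FRec(S)\subset\UFRec(S)\subset\RRec(S)$ hold (they follow at once from $\dinf(A)\leq\dsup(A)\leq\Bdsup(A)$ for $A\subset\NN_0$), so passing to closures shows that frequent recurrence implies $\Uc$-frequent recurrence, which in turn implies reiterative recurrence. Applying this with $S=T_n$ for each $n$ gives $(i)\Rightarrow(ii)\Rightarrow(iii)$, while specializing $(iii)$ to $n=1$, where $T_1=T$, gives $(iii)\Rightarrow(iv)$.

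The only substantial implication is $(iv)\Rightarrow(i)$. Here I would fix $n\in\NN$ and observe that $T_n=\underbrace{T\oplus\cdots\oplus T}_{n}$ is exactly the direct sum of $n$ copies of the adjoint operator $T$ on the separable dual Banach space $X$, so it falls under the preceding direct-sum corollary with $T_1=\cdots=T_n=T$ and $N=n$. That corollary yields, for this fixed $n$, that $T_n$ is frequently recurrent if and only if each summand is reiteratively recurrent, that is, if and only if $T$ is reiteratively recurrent. Hence, assuming $(iv)$, the operator $T_n$ is frequently recurrent; since $n$ was arbitrary, $(i)$ follows, closing the cycle $(i)\Rightarrow(ii)\Rightarrow(iii)\Rightarrow(iv)\Rightarrow(i)$.

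For the ``in particular'' clause, no new work is needed: the direct-sum corollary already covers operators on reflexive spaces, and if $X$ is reflexive then so is $X^n$, so each $T_n$ remains within its scope and the same equivalences hold verbatim. The main---indeed essentially the only---obstacle here is conceptual rather than computational: one must notice that the class of adjoint operators on separable dual Banach spaces (and likewise the class of operators on separable reflexive spaces) is stable under finite direct sums, so that every $T_n$ stays in the setting to which the earlier machinery, ultimately Theorem~\ref{The:Main} combined with Lemma~\ref{Lem:Sophie}, applies. Once this is granted, the striking upgrade from mere reiterative recurrence of $T$ to frequent recurrence of all its $n$-fold sums is nothing more than a reading of the preceding corollary for coincident summands.
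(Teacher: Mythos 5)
Your proposal is correct and follows essentially the same route as the paper, which presents this statement as an immediate consequence of the preceding direct-sum corollary: the cycle (i)$\Rightarrow$(ii)$\Rightarrow$(iii) by the density inequalities, (iii)$\Rightarrow$(iv) by taking $n=1$, and (iv)$\Rightarrow$(i) by applying that corollary with all summands equal to $T$ is exactly the intended argument. Your closing observation that adjoint operators on separable dual spaces (and operators on separable reflexive spaces) are stable under finite direct sums correctly identifies why each $T_n$ stays within the scope of Theorem~\ref{The:Main} and Lemma~\ref{Lem:Sophie}.
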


As a consequence of the above fact we can prove some results related with hypercyclicity. We start with an independent proof of \cite[Theorem 2.5 and Corollary 2.6]{ErEsMe2021} for the particular case of the reiteratively hypercyclic (adjoint) operators:

\begin{theorem}
	Let $T:X\rightarrow X$ be a reiteratively hypercyclic adjoint operator on a separable dual Banach space $X$. Then for every $n \in \NN$ the operator $T_n$ is reiteratively hypercyclic and frequently recurrent. In particular, the result holds whenever $T$ is an operator on a separable reflexive Banach space $X$.
\end{theorem}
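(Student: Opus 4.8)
The plan is to treat the two conclusions separately, since they draw on very different inputs. The \emph{frequent recurrence} of each $T_n$ comes almost for free. If $x\in\RHC(T)$ then its orbit is dense, and since $\Bdsup$ is shift-invariant we have $N(T^kx,U)=(N(x,U)-k)\cap\NN_0$, so every iterate $T^kx$ again lies in $\RHC(T)\subset\RRec(T)$; thus $\RRec(T)$ is dense and $T$ is reiteratively recurrent. The Corollary preceding this theorem then gives that $T_n$ is frequently recurrent for every $n$, which is half the statement and also equips us with a dense set $\FRec(T_n)$ of frequently recurrent vectors to reuse below.

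For the \emph{reiterative hypercyclicity} of $T_n$, the conceptual target is a single vector lying in $\HC(T_n)\cap\RRec(T_n)$: by the same shift-invariance trick one checks the identity $\RHC(S)=\HC(S)\cap\RRec(S)$ for any operator $S$, so a hypercyclic reiteratively recurrent vector is automatically reiteratively hypercyclic. Since such vectors may well form a meager set, a Baire argument is hopeless and I would instead verify the transitivity reformulation: by the Birkhoff-type characterization of reiterative hypercyclicity for the family of positive-upper-Banach-density sets (see \cite{BMPP2016,ErEsMe2021}), it suffices to show
\[
\Bdsup\big(\{\, m\in\NN_0 : T_n^m(\mathcal U)\cap\mathcal V\neq\varnothing \,\}\big)>0
\]
for all nonempty open $\mathcal U,\mathcal V\subset X^n$, and by a routine reduction we may take $\mathcal U=\prod_i U_i$ and $\mathcal V=\prod_i V_i$ to be boxes. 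The first step is to locate one transition time: since $T$ is reiteratively hypercyclic it is weakly mixing (\cite{BMPP2016}), hence $T_n$ is hypercyclic and therefore topologically transitive, producing some $k_0$ with $\mathcal W:=\mathcal U\cap T_n^{-k_0}(\mathcal V)\neq\varnothing$.

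The second step upgrades this single time to a set of positive upper Banach density by feeding in the frequent recurrence of $T_n$. I would pick $w\in\mathcal W\cap\FRec(T_n)$ (possible as $\FRec(T_n)$ is dense and $\mathcal W$ open) and set $D:=\{\, m\in\NN_0 : T_n^m w\in\mathcal W \,\}$, so that $\dinf(D)>0$ because $\mathcal W$ is a neighbourhood of $w$. For $a,b\in D$ one has $T_n^a w\in\mathcal U$ and, using $\mathcal W\subset T_n^{-k_0}(\mathcal V)$, also $T_n^{\,b+k_0}w\in\mathcal V$; hence for every $m=k_0+(b-a)\ge 0$ the point $T_n^a w$ witnesses $T_n^m(\mathcal U)\cap\mathcal V\neq\varnothing$. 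Thus the return set contains $\big(k_0+(D-D)\big)\cap\NN_0$, and since $D$ has positive lower density, hence positive upper Banach density, its difference set $D-D$ is syndetic; the return set is therefore syndetic and in particular of positive upper Banach density. The displayed inequality follows, and the characterization yields that $T_n$ is reiteratively hypercyclic.

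The hard part, and the gap this two-step argument bridges, is exactly that the family of positive-upper-Banach-density sets is \emph{not} a filter: knowing that each coordinate return set $\{m:T^m(U_i)\cap V_i\neq\varnothing\}$ has positive upper Banach density says nothing about their intersection, which is what $T_n$ requires. The recurrence machinery of the earlier sections only produces recurrence, never the transitivity linking $\mathcal U$ to $\mathcal V$; that must be imported from the hypercyclic structure of $T$ through weak mixing, after which the syndeticity of difference sets converts the ubiquitous frequent recurrence of $T_n$ into genuine density of transition times. Finally, the reflexive case follows verbatim, since every operator on a reflexive Banach space is an adjoint operator and $X^n$ is again reflexive, so $T_n$ remains an adjoint operator to which the very same reasoning (and Theorem~\ref{The:Banach}) applies.
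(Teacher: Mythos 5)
The first half of your argument (the frequent recurrence of each $T_n$) is correct and is exactly the paper's route: the orbit of a reiteratively hypercyclic vector consists of reiteratively recurrent vectors, so $T$ is reiteratively recurrent, and the product corollary to Theorem~\ref{The:Banach} applies; the closing remark on the reflexive case is also fine. The gap is in the second half, and it is fatal to that part of the argument as written: the ``Birkhoff-type characterization'' you invoke --- that $\Bdsup\bigl(\{m\in\NN_0 : T_n^m(\mathcal U)\cap\mathcal V\neq\varnothing\}\bigr)>0$ for all nonempty open $\mathcal U,\mathcal V$ implies reiterative hypercyclicity --- is not a theorem of \cite{BMPP2016} or \cite{ErEsMe2021}, and it is in fact false. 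For \emph{any} weakly mixing operator $S$, every return set $N(U,V)=\{m : S^m(U)\cap V\neq\varnothing\}$ is thick: apply transitivity of the $(L+1)$-fold product of $S$ to $U\times\cdots\times U$ and $V\times S^{-1}(V)\times\cdots\times S^{-L}(V)$ (nonempty open, since transitive operators have dense range), which places a full interval $[n,n+L]$ inside $N(U,V)$. Thick sets have upper Banach density $1$, so every weakly mixing operator passes your test; for mixing operators the sets $N(U,V)$ are even cofinite, hence syndetic, so the stronger syndeticity you derive via the difference-set argument is automatic as well. Yet mixing operators need not be reiteratively hypercyclic: a weighted backward shift $B_w$ on $\ell^p(\NN)$ with $w_1\cdots w_n\to\infty$ but $\sum_n (w_1\cdots w_n)^{-p}=\infty$ is mixing but not frequently hypercyclic by \cite{BaRu2015}, and \cite{BMPP2016} shows that for such shifts reiterative hypercyclicity coincides with frequent hypercyclicity. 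So no largeness condition on the two-open-set return sets $N(\mathcal U,\mathcal V)$ can ever certify reiterative hypercyclicity, and your final step has nothing to rest on.

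The frustrating part is that you had already assembled a complete proof and then discarded it. You established that $T_n$ is hypercyclic (via weak mixing of $T$, from \cite{BMPP2016}) and that $T_n$ is reiteratively recurrent (even frequently recurrent), and \cite[Theorem~2.1]{BoGrLoPe2020} states precisely that a hypercyclic, reiteratively recurrent operator is reiteratively hypercyclic (with $\RHC(T_n)=\HC(T_n)$); this one citation is how the paper concludes. Your reason for abandoning this route --- that $\HC(T_n)\cap\RRec(T_n)$ might be meager, so ``a Baire argument is hopeless'' --- misreads how that theorem works: its proof is not a category argument but a finite-window shadowing argument. A hypercyclic vector's orbit approximates arbitrarily long \emph{finite} blocks of the orbit of any reiteratively recurrent vector, and upper Banach density is computed on windows $[m,m+N]$ placed anywhere, so positive density transfers from the recurrent vector's return sets to the hypercyclic vector's return sets. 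It is exactly there, and not in any transitivity reformulation, that the upper Banach density family behaves better than the lower or upper density families.
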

\begin{proof}
	Let $n \in \NN$. Since $T$ is reiteratively hypercyclic we know that:
	\begin{enumerate}[(a)]
		\item $T$ is topologically weakly mixing (see \cite[Page 548]{BMPP2016}), and hence $T_n$ is topologically transitive, and in particular hypercyclic;
		
		\item $T$ is reiteratively recurrent, and by the above results $T_n$ is frequently recurrent, in particular reiteratively recurrent.
	\end{enumerate}
	By \cite[Theorem 2.1]{BoGrLoPe2020}, reiterative recurrence plus hypercyclicity imply reiterative hypercyclicity. We deduce that $T_n$ is reiteratively hypercyclic and frequently recurrent.
\end{proof}

If we start just with reiterative recurrence, having a dense set of orbits converging to $0$ implies a strong notion of hypercyclicity:

\begin{theorem}
	Let $T:X\rightarrow X$ be an adjoint operator on a separable dual Banach space $X$. Suppose that there is a dense set $X_0 \subset X$ such that $T^kx\to 0$ as $k\to\infty$ for each $x \in X_0$. The following statements are equivalent:
	\begin{enumerate}[{\em(i)}]
		\item for every $n \in \NN$, $T_n$ is $\Uc$-frequently hypercyclic and frequently recurrent;
		
		\item $T$ is reiteratively recurrent.
	\end{enumerate}
	In particular, the result holds if $T$ is an operator on a separable reflexive Banach space $X$.
\end{theorem}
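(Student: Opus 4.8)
The plan is to prove the two implications separately, all the work being in (ii)$\Rightarrow$(i). The implication (i)$\Rightarrow$(ii) is immediate: taking $n=1$, statement (i) says in particular that $T=T_1$ is frequently recurrent, and since $\FRec(T)\subset\RRec(T)$ this gives (ii). For (ii)$\Rightarrow$(i) I would reduce everything to statements about a single operator. First, assuming $T$ reiteratively recurrent, the preceding corollary (the product form of Theorem~\ref{The:Banach}) already yields that $T_n$ is frequently recurrent for every $n\in\NN$; this is exactly the ``frequently recurrent'' half of (i) and, in particular, each $T_n$ is recurrent. Second, the orbits-to-zero hypothesis is inherited by all direct sums: if $X_0$ is dense in $X$ with $T^kx\to0$, then $X_0^n$ is dense in $X^n$ and $T_n^k\to0$ pointwise on $X_0^n$. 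Thus each $T_n$ satisfies the same two hypotheses as $T$, and it suffices to show that any operator which is frequently recurrent and possesses a dense set of vectors with orbit tending to $0$ is $\Uc$-frequently hypercyclic.

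The first step towards this is that such an operator is automatically hypercyclic. Indeed, given nonempty open sets $U,V$, I would choose a recurrent vector $w\in V$ (recurrent vectors are dense) and a vector $p\in X_0$ with $w+p\in U$ (possible since $X_0$ is dense); then $T^kp\to0$, while the return set $N(w,V)$ is infinite, so one may pick $k\in N(w,V)$ so large that $T^kp$ is negligible. For this $k$ one has $T^k(w+p)=T^kw+T^kp\approx T^kw\in V$, whence $T^k(U)\cap V\neq\varnothing$. The same argument applied to $T\oplus\cdots\oplus T$ (which is again recurrent, with $X_0\oplus\cdots\oplus X_0$ dense and orbits tending to $0$) shows in fact that $T$ is weakly mixing and that every $T_n$ is hypercyclic; alternatively one applies the displayed argument directly to each $T_n$.

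It remains to upgrade hypercyclicity to $\Uc$-frequent hypercyclicity using the \emph{frequent} recurrence now available for $T_n$. The natural route is a density-sensitive analogue of \cite[Theorem~2.1]{BoGrLoPe2020} (reiterative recurrence plus hypercyclicity gives reiterative hypercyclicity): here I would show that \emph{frequent recurrence plus hypercyclicity yields $\Uc$-frequent hypercyclicity}. Concretely, fix a countable family of targets dense in $X^n$ and build a single vector $z$ by a recursive, time-interleaved construction: at each stage one uses the transitivity mechanism above to append a perturbation from $X_0^n$ that first brings the orbit of $z$ close to a frequently recurrent vector $w_j$ lying near the current target, after which the frequent recurrence of $w_j$ forces the orbit to revisit that target along a set of times of positive lower density; crucially, because every perturbation is taken in $X_0^n$, its influence decays and the arrangement made for one target is not destroyed by those made for the others. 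Distributing the effect over infinitely many targets that must share a single time axis degrades the positive lower density enjoyed by each individual $w_j$ to positive \emph{upper} density for the visits of $z$ to each target, which is precisely why the conclusion is $\Uc$-frequent --- and not frequent --- hypercyclicity.

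The main obstacle is exactly this last construction: ensuring that a \emph{single} vector approaches \emph{all} targets while controlling, simultaneously and quantitatively, the density of the visiting times and the mutual interference of the infinitely many perturbations. The transitivity step and the passage to direct sums are routine, and the frequent recurrence of the $T_n$ is handed to us by the corollary; the delicate point is the density bookkeeping in the interleaving, for which the orbits-to-zero hypothesis (guaranteeing that past perturbations fade) is indispensable. For the $\Uc$-frequent hypercyclicity techniques adapted to product systems one may also lean on \cite{ErEsMe2021}.
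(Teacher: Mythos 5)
Your reduction steps coincide with the paper's: the implication (i)$\Rightarrow$(ii) is trivial, the product corollary of Theorem~\ref{The:Banach} gives that each $T_n$ is frequently (hence $\Uc$-frequently) recurrent, and $Y_0 := X_0\oplus\cdots\oplus X_0$ is dense in $X^n$ with every orbit tending to $0$. The gap lies in your final step, and it is twofold. First, the intermediate lemma you propose to prove --- \emph{frequent recurrence plus hypercyclicity yields $\Uc$-frequent hypercyclicity} --- is false: chaotic operators are hypercyclic and have a dense set of periodic points (which are uniformly, hence frequently, recurrent), yet there exist chaotic operators which are not $\Uc$-frequently hypercyclic (see \cite{Menet2017} and \cite{GriMaMe2021}); the paper itself invokes exactly this counterexample at the end of Section~\ref{Sec:6Inverse}. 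So no ``density-sensitive analogue'' of \cite[Theorem~2.1]{BoGrLoPe2020} of the form you state can exist: the orbits-to-zero hypothesis must enter the density-upgrade argument itself, in an essential way, and not merely serve to produce hypercyclicity or weak mixing as in your first step.

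Second, the construction you then sketch (interleaved perturbations from $X_0^n$, whose decay protects earlier arrangements, degrading lower density to upper density) is --- once correctly stated with the orbits-to-zero hypothesis built in --- precisely \cite[Theorem~2.12]{BoGrLoPe2020}: \emph{a $\Uc$-frequently recurrent operator admitting a dense set of vectors with orbit converging to $0$ is $\Uc$-frequently hypercyclic}. This is the result the paper cites to conclude: applied to $T_n$ (which is $\Uc$-frequently recurrent by the product corollary) and to the dense set $Y_0$, it immediately yields (i). You, by contrast, explicitly defer ``the density bookkeeping in the interleaving'' as the main obstacle and never carry it out; but that bookkeeping is the entire mathematical content of the step, since everything you do establish (transitivity, passage to direct sums, heredity of the hypotheses) is routine. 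As written, your proposal is therefore incomplete at its only non-trivial point, and moreover organized around a false statement; it becomes a correct proof, essentially identical to the paper's, if the false lemma and the unfinished construction are replaced by an appeal to \cite[Theorem~2.12]{BoGrLoPe2020}.
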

\begin{proof}
	Clearly (i) implies (ii) even if $T:X\rightarrow X$ is not a linear map. If we suppose (ii) and we fix $n \in \NN$, by the above results we get that $T_n$ is frequently recurrent and in particular $\Uc$-frequently recurrent. Let $Y_0 := X_0\oplus\cdots\oplus X_0$ the $n$-direct sum of the set $X_0$. Then $Y_0$ is a dense subset of the $n$-fold direct sum $X^n$ and every orbit of a point of $Y_0$ converges to $(0,...,0) \in X^n$. By \cite[Theorem 2.12]{BoGrLoPe2020}, the existence of $Y_0$ and the $\Uc$-frequent recurrence imply that $T_n$ is $\Uc$-frequently hypercyclic.
\end{proof}

It would be interesting to change the assumption of {\em $\Uc$-frequent hypercyclicity} in the above statement into that of {\em frequent hypercyclicity}. However, as exposed in \cite[Question~2.13]{BoGrLoPe2020}, the following is an open problem:

\begin{question}[\textbf{\cite[Question 2.13]{BoGrLoPe2020}}]
	Let $T$ be a frequently recurrent operator admitting a dense set of vectors with orbit convergent to $0$. Is $T$ is frequently hypercyclic?
\end{question}

If now we focus on Theorems \ref{The:Hilbert} and \ref{The:JaDeGli+URec}, their generalizations for product linear dynamical systems follow in a much easier way, since any $N$-tuple formed by unimodular eigenvectors is a linear combination of such vectors for the direct sum map:

\begin{theorem}\label{The:productEIG}
	Let $N\in\NN$ and suppose that for each $1\leq i\leq N$:
	\begin{enumerate}[{\em(a)}]
		\item we have an operator $T_i:H_i\rightarrow H_i$ on a complex Hilbert space $H_i$. Then, for the direct sum operator $T=T_1\oplus \cdots\oplus T_N : H \rightarrow H$ on the direct sum Hilbert space $H = H_1\oplus\cdots\oplus H_N$, we have the equality:
		\[
		\cl{\lspan(\Ec(T))} = \displaystyle \prod_{i=1}^N \cl{\RRec^{bo}(T_i)},
		\]\newpage
		In particular, the following statements are equivalent:
		\begin{enumerate}[{\em(i)}]
			\item the set $\lspan(\Ec(T))$ is dense in $H$;
			
			
			
			
			
			
			
			\item the set $\RRec^{bo}(T_i)$ is dense in $H_i$ for every $1\leq i\leq N$.
		\end{enumerate}
		
		\item we have a power-bounded operator $T_i:X_i\rightarrow X_i$ on a complex reflexive Banach space $X_i$. Then, for the direct sum operator $T=T_1\oplus \cdots\oplus T_N : X \rightarrow X$ on the direct sum space $X = X_1\oplus\cdots\oplus X_N$, we have the equality
		\[
		\cl{\lspan(\Ec(T))} = \prod_{i=1}^N \cl{\URec(T_i)}.
		\]
		In particular, the following statements are equivalent:
		\begin{enumerate}[{\em(i)}]
			\item the set $\lspan(\Ec(T))$ is dense in $X$;
			
			
			
			
			\item the set $ \URec(T_i)$ is dense in $X_i$ for every $1\leq i\leq N$.
		\end{enumerate}
	\end{enumerate} 
\end{theorem}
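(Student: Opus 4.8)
The plan is to reduce both parts to the single algebraic identity
\[
\lspan(\Ec(T)) = \prod_{i=1}^N \lspan(\Ec(T_i)),
\]
after which parts (a) and (b) follow by taking closures and invoking Theorem~\ref{The:Hilbert} and Theorem~\ref{The:JaDeGli+URec} respectively, coordinate by coordinate. Throughout I write $J_i$ for the canonical embedding of the $i$-th factor into the direct sum (placing a vector in the $i$-th slot and $0$ elsewhere); since $TJ_i = J_iT_i$, each $J_i$ maps unimodular eigenvectors of $T_i$ to unimodular eigenvectors of $T$.

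First I would describe $\Ec(T)$. A nonzero tuple $(y_1,\dots,y_N)$ is a unimodular eigenvector of $T$ with eigenvalue $\lambda \in \TT$ precisely when $T_i y_i = \lambda y_i$ for every $i$; thus each nonzero component $y_i$ lies in $\Ec(T_i)$ (sharing the common eigenvalue $\lambda$), while the zero components lie trivially in $\lspan(\Ec(T_i))$. Hence $\Ec(T) \subset \prod_i \lspan(\Ec(T_i))$, and since the right-hand side is a linear subspace, $\lspan(\Ec(T)) \subset \prod_i \lspan(\Ec(T_i))$. For the reverse inclusion I would use the coordinate-embedding trick signalled in the text: given $x_i = \sum_j c_{ij} v_{ij}$ with $v_{ij} \in \Ec(T_i)$, each $J_i(v_{ij})$ belongs to $\Ec(T)$, so
\[
(x_1,\dots,x_N) = \sum_{i=1}^N \sum_j c_{ij}\, J_i(v_{ij}) \in \lspan(\Ec(T)).
\]
This proves the identity. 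The conceptual point worth stressing is that although eigenvectors of a direct sum must have all their nonzero coordinates attached to one common eigenvalue---so that $\Ec(T)$ is genuinely thinner than a naive product of the $\Ec(T_i)$---passing to the linear span erases this constraint, because eigenvectors attached to different eigenvalues in different coordinates can be recombined through the embeddings $J_i$.

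It then remains to take closures. Since the product is finite, the closure of a product equals the product of the closures (in whichever equivalent norm defines the product topology on the direct sum, as noted after the statement), so
\[
\cl{\lspan(\Ec(T))} = \prod_{i=1}^N \cl{\lspan(\Ec(T_i))}.
\]
For part (a), Theorem~\ref{The:Hilbert} gives $\cl{\lspan(\Ec(T_i))} = \cl{\RRec^{bo}(T_i)}$ for each $i$, yielding the asserted equality; for part (b), Theorem~\ref{The:JaDeGli+URec} gives $\cl{\lspan(\Ec(T_i))} = \cl{\URec(T_i)}$, yielding the other. The ``in particular'' equivalences are then immediate: $\lspan(\Ec(T))$ is dense in the direct sum iff its closure fills the whole product, which by the displayed identity happens iff each factor closure fills $H_i$ (resp. $X_i$), i.e. iff $\RRec^{bo}(T_i)$ (resp. $\URec(T_i)$) is dense in every factor.

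There is no serious obstacle here; the argument is a clean reduction to the already-established single-space theorems. The only two points needing care are the ones flagged above: verifying that the span identity survives the common-eigenvalue constraint defining $\Ec(T)$ (handled entirely by the embeddings $J_i$), and checking that taking closures commutes with the finite product and is insensitive to the choice of product norm. Both are routine once the embedding observation is in place.
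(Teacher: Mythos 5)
Your proof is correct and takes essentially the same route as the paper: both hinge on the observation that the embedded vectors $(0,\dots,0,x_i,0,\dots,0)$ with $x_i \in \Ec(T_i)$ are unimodular eigenvectors of $T$, after which Theorems~\ref{The:Hilbert} and~\ref{The:JaDeGli+URec} are applied coordinate-wise. Your write-up simply makes explicit the details (the span identity, the common-eigenvalue constraint, and the commutation of closure with finite products) that the paper's one-sentence proof leaves implicit.
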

\begin{proof}
	Since the vector $(0,...,0,x_i,0,...,0)$ belongs to $\Ec(T)$ whenever $x_i \in \Ec(T_i)$ for each $1\leq i\leq N$, it is enough to apply Theorems \ref{The:Hilbert} and \ref{The:JaDeGli+URec} to each operator $T_i$.
\end{proof}

Finally we get the desired generalization of Theorems \ref{The:Hilbert} and \ref{The:JaDeGli+URec}:

\begin{corollary}
	Let $T \in \Lc(H)$ where $H$ is a complex Hilbert space. The following statements are equivalent:
	\begin{enumerate}[{\em(i)}]
		\item for every $n \in \NN$, the set $\lspan(\Ec(T_n))$ is dense in $H^n$;
		
		\item for every $n \in \NN$, $T_n$ is $\Del^*$-recurrent;
		
		\item for every $n \in \NN$, $T_n$ is $\IP^*$-recurrent;
		
		\item for every $n \in \NN$, $T_n$ is uniformly recurrent;
		
		\item for every $n \in \NN$, the set $\FRec^{bo}(T_n)$ is dense in $H^n$;
		
		\item for every $n \in \NN$, the set $\UFRec^{bo}(T_n)$ is dense in $H^n$;
		
		\item for every $n \in \NN$, the set $\RRec^{bo}(T_n)$ is dense in $H^n$;
		
		\item the set $\RRec^{bo}(T)$ is dense in $H$.
	\end{enumerate}
\end{corollary}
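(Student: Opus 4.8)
The plan is to bootstrap the two single-/finite-direct-sum Hilbert-space results already established, namely Theorem~\ref{The:Hilbert} and Theorem~\ref{The:productEIG}, to the $n$-fold direct sums $T_n$, treating the internal chain (i)--(vii) and its link with (viii) separately. Nothing new needs to be invented; the task is to assemble these results correctly and to keep track of the (possible) non-separability of $H$.

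First I would fix $n \in \NN$ and observe that $H^n$ is again a complex Hilbert space (separable exactly when $H$ is) and that $T_n \in \Lc(H^n)$. Applying Theorem~\ref{The:Hilbert}(b) to the operator $T_n$ on $H^n$ — invoking Remark~\ref{Rem:Non-SeparableII} if $H$ is non-separable, so that separability is never actually required for the purely topological equivalences — yields that the seven conditions ``$\lspan(\Ec(T_n))$ is dense in $H^n$'', ``$T_n$ is $\Del^*$-recurrent'', ``$T_n$ is $\IP^*$-recurrent'', ``$T_n$ is uniformly recurrent'', ``$\FRec^{bo}(T_n)$ is dense'', ``$\UFRec^{bo}(T_n)$ is dense'' and ``$\RRec^{bo}(T_n)$ is dense'' are pairwise equivalent for that particular $n$. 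Since these are precisely the statements appearing in (i)--(vii) at the index $n$, quantifying the equivalence over all $n \in \NN$ immediately gives (i) $\Leftrightarrow$ (ii) $\Leftrightarrow$ (iii) $\Leftrightarrow$ (iv) $\Leftrightarrow$ (v) $\Leftrightarrow$ (vi) $\Leftrightarrow$ (vii).

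It then remains to connect this block with (viii). The implication (vii) $\Rightarrow$ (viii) is trivial: specialize to $n=1$, where $T_1 = T$ and $H^1 = H$. For the converse I would feed the single operator $T$ into the product machinery, applying Theorem~\ref{The:productEIG}(a) with all $N=n$ factors equal to $T$ (so that its ``direct sum operator'' is exactly our $T_n$); this gives the equality
\[
\cl{\lspan(\Ec(T_n))} = \prod_{i=1}^n \cl{\RRec^{bo}(T)} = \big(\cl{\RRec^{bo}(T)}\big)^n .
\]
Hence density of $\RRec^{bo}(T)$ in $H$ forces density of $\lspan(\Ec(T_n))$ in $H^n$ for every $n$, i.e. (viii) $\Rightarrow$ (i), which closes the cycle and finishes the proof.

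I do not expect a genuine obstacle, since the corollary is an assembly of the two main Hilbert-space theorems rather than a fresh argument; its real content is that bounded-orbit reiterative recurrence of the single operator $T$ is already as strong as unimodular-eigenvector spanning for \emph{all} of its direct sums. The one step worth care is (viii) $\Rightarrow$ (i): it is the only place where the product structure does real work, lifting the \emph{weak} hypothesis (density of $\RRec^{bo}(T)$ for a single operator) to the \emph{strong} conclusion (eigenvector spanning for every $T_n$), and it rests entirely on Theorem~\ref{The:productEIG}(a) applied to $n$ identical copies of $T$. The only other bookkeeping point is the separability caveat, handled uniformly by Remark~\ref{Rem:Non-SeparableII}.
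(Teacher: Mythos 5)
Your proposal is correct and follows essentially the same route the paper intends: the corollary is stated as an immediate consequence of Theorem~\ref{The:Hilbert} (applied to each $T_n$ on $H^n$, with Remark~\ref{Rem:Non-SeparableII} covering non-separability) together with Theorem~\ref{The:productEIG}(a) applied to $n$ identical copies of $T$, which is exactly how you close the cycle via (viii) $\Rightarrow$ (i). Your handling of the separability caveat and the singling out of (viii) $\Rightarrow$ (i) as the step where the product machinery does the work match the paper's logic precisely.
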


\begin{corollary}
	Let $T:X\rightarrow X$ be a power-bounded operator on a complex reflexive Banach space $X$. The following statements are equivalent:
	\begin{enumerate}[{\em(i)}]
		\item for every $n \in \NN$, the set $\lspan(\Ec(T_n))$ is dense in $X^n$;
		
		\item for every $n \in \NN$, $T_n$ is $\Del^*$-recurrent;
		
		\item for every $n \in \NN$, $T_n$ is $\IP^*$-recurrent;
		
		\item for every $n \in \NN$, $T_n$ is uniformly recurrent;
		
		\item $T$ is uniformly recurrent.
	\end{enumerate}
\end{corollary}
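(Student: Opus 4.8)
The plan is to derive this corollary entirely from the two results already established for a single operator and for finite direct sums, namely Theorem~\ref{The:JaDeGli+URec} and Theorem~\ref{The:productEIG}(b). The key observation is that each iterated direct sum $T_n$ is itself a power-bounded operator on a complex reflexive Banach space, so the single-operator machinery applies to it verbatim.

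First I would record the stability facts: for every $n\in\NN$ the space $X^n$ is again complex and reflexive (finite direct sums preserve reflexivity), and $T_n^k=T^k\oplus\cdots\oplus T^k$ acts componentwise, so $\sup_k\|T_n^k\|<\infty$ whenever $\sup_k\|T^k\|<\infty$; thus $T_n$ is power-bounded on the reflexive space $X^n$. Applying Theorem~\ref{The:JaDeGli+URec} to the operator $T_n$ then gives, for each fixed $n$, the equivalence of the four statements ``$\lspan(\Ec(T_n))$ is dense in $X^n$'', ``$T_n$ is $\Del^*$-recurrent'', ``$T_n$ is $\IP^*$-recurrent'' and ``$T_n$ is uniformly recurrent''. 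Since statements (i)--(iv) of the corollary are exactly these four properties quantified over all $n$, the equivalences (i) $\Leftrightarrow$ (ii) $\Leftrightarrow$ (iii) $\Leftrightarrow$ (iv) follow at once, $n$ by $n$.

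It then remains only to tie in (v). The implication (iv) $\Rightarrow$ (v) is immediate on taking $n=1$, since $T_1=T$. For the reverse, I would invoke Theorem~\ref{The:productEIG}(b) with $N=n$ and all factors equal to $T$, which yields
\[
\cl{\lspan(\Ec(T_n))}=\prod_{i=1}^n\cl{\URec(T)}.
\]
If $T$ is uniformly recurrent then $\cl{\URec(T)}=X$, so the right-hand side is $X^n$ and hence $\lspan(\Ec(T_n))$ is dense in $X^n$; as $n$ was arbitrary this is precisely statement (i). This closes the cycle and shows that all five statements are equivalent.

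I do not expect any genuine obstacle: the entire content is packaged in Theorems~\ref{The:JaDeGli+URec} and~\ref{The:productEIG}, and the only thing to verify by hand is the (routine) passage of power-boundedness and reflexivity to finite direct sums. The one point worth stating carefully is that the equivalences of (i)--(iv) must be read $n$ by $n$, whereas the link to (v) comes from the product formula collapsing the ``for every factor'' condition down to the single requirement that $T$ be uniformly recurrent.
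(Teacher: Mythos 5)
Your proposal is correct and follows essentially the same route as the paper, which presents this corollary as an immediate consequence of Theorem~\ref{The:JaDeGli+URec} and the product formula of Theorem~\ref{The:productEIG}(b): the equivalences (i)--(iv) come from the single-operator theorem applied to each $T_n$ (power-boundedness and reflexivity indeed pass to finite direct sums), and the cycle is closed via (iv)~$\Rightarrow$~(v) with $n=1$ and (v)~$\Rightarrow$~(i) through $\cl{\lspan(\Ec(T_n))}=\prod_{i=1}^n\cl{\URec(T)}$.
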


\section{Inverse dynamical systems}\label{Sec:6Inverse}

As in the case of products, given a (topological) dynamical system $T:X\rightarrow X$ with some property, it is natural to ask whether the {\em inverse dynamical system} $T^{-1}:X\rightarrow X$ (if it exists and is continuous) has the same property. This is true for hypercyclicity and reiterative hypercyclicity (see \cite{BoGr2018}), but it fails for $\Uc$-frequent hypercyclicity (see \cite{Menet2019U-freq}) and frequent hypercyclicity (see \cite{Menet2019freq}). It is also known that the inverse of a frequently hypercyclic operator is $\Uc$-frequently hypercyclic (see \cite[Proposition 20]{BaRu2015}).\\[-5pt]

If we focus on recurrence properties, the inverse of a recurrent operator is again recurrent as \cite[Proposition 2.6]{CoMaPa2014} shows. A simpler proof (in a transitive style) of that fact would be:

\begin{proposition}[\textbf{\cite[Proposition 2.6]{CoMaPa2014}}]
	Let $T:X\rightarrow X$ be an invertible operator. Then $T$ is recurrent if and only if so is $T^{-1}$.
\end{proposition}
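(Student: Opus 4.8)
The plan is to exploit the fact that the \emph{topological recurrence} condition is symmetric under passing to the inverse. By the Costakis--Manoussos--Parissis theorem (\cite[Proposition 2.1]{CoMaPa2014}), recurrence of any operator $S \in \Lc(X)$ is equivalent to the requirement that for every non-empty open subset $U$ of $X$ there is some $n \in \NN$ with $S^n(U) \cap U \neq \varnothing$. Since $T$ is invertible, the open mapping theorem guarantees that $T^{-1} \in \Lc(X)$ as well, so $(X,T^{-1})$ is again a linear dynamical system to which this characterization applies. Thus it suffices to verify that the topological recurrence condition for $T$ and for $T^{-1}$ amount to the same thing.

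First I would fix a non-empty open subset $U$ of $X$ and use that $T$ is a bijection of $X$ onto itself. Applying the bijection $T^{-n}$ to a point witnessing $T^n(U) \cap U \neq \varnothing$, and conversely, yields the chain of equivalences
\[
T^n(U) \cap U \neq \varnothing \iff U \cap T^{-n}(U) \neq \varnothing \iff (T^{-1})^n(U) \cap U \neq \varnothing
\]
for every $n \in \NN$. Consequently, $T$ satisfies the topological recurrence condition on $U$ if and only if $T^{-1}$ does. As $U$ was an arbitrary non-empty open set, this shows that $T$ is topologically recurrent exactly when $T^{-1}$ is, and the desired equivalence between recurrence of $T$ and of $T^{-1}$ follows at once from the cited characterization.

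There is no real obstacle here: the entire content lies in the displayed chain of equivalences, which is an immediate consequence of the bijectivity of $T$. The only point deserving a moment's care is to confirm that the reformulation of recurrence as topological recurrence genuinely applies to both systems $(X,T)$ and $(X,T^{-1})$; this is ensured because both $T$ and $T^{-1}$ are bounded operators on $X$, so the theorem of \cite{CoMaPa2014} is available for each of them. This accounts for the remark in the excerpt that the argument is of a ``transitive style'', in contrast to the orbit-based approach of the original proof in \cite{CoMaPa2014}.
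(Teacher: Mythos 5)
Your proof is correct and follows essentially the same route as the paper: both invoke the Costakis--Manoussos--Parissis characterization of recurrence as topological recurrence and reduce the claim to the equivalence $T^n(U)\cap U\neq\varnothing \iff U\cap T^{-n}(U)\neq\varnothing$ for arbitrary non-empty open $U$. Your additional remark that $T^{-1}\in\Lc(X)$ (via the open mapping theorem), so the characterization applies to both systems, is a sound point of care that the paper leaves implicit.
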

\begin{proof}
	By \cite[Proposition 2.1]{CoMaPa2014} the result follows from the equivalence
	\[
	T^n(U)\cap U\neq \varnothing \text{ if and only if } U \cap T^{-n}(U) \neq \varnothing,
	\]
	valid for any non-empty open subset $U$ of $X$.
\end{proof}

However, it is also shown in \cite[Remark 2.7]{CoMaPa2014} that the sets $\Rec(T)$ and $\Rec(T^{-1})$ may not be equal in spite of the fact that their closures coincide. For general $\Fc$-recurrence notions the following problem was proposed in \cite{BoGrLoPe2020}:

\begin{question}[\textbf{\cite[Question 2.14]{BoGrLoPe2020}}]\label{Q:invertible}
	Let $T$ be an invertible operator. If $T$ is reiteratively (resp. $\Uc$-frequently, frequently or uniformly) recurrent, does $T^{-1}$ has the same property?
\end{question}

In fact, we could ask the same question for $\IP^*$, $\Del^*$-recurrence and unimodular eigenvalues. However, for the latest the linearity is enough to show it since if
\[
Tx = \lambda x \text{ for some } \lambda \in \TT \text{, then } T^{-1}x = \cl{\lambda} (T^{-1}\lambda x) = \cl{\lambda} x,
\]
so clearly $\lspan(\Ec(T)) = \lspan(\Ec(T^{-1}))$. In order to answer Question \ref{Q:invertible} in our dual/reflexive or Hilbertian setting we just have to recall the following trivial fact: given a homeomorphism $T:X\rightarrow X$ on a Polish space $X$ and a Borel measure $\mu$ on $X$, $\mu$ is $T$-invariant if and only if it is $T^{-1}$-invariant.

\begin{theorem}[\textbf{From Reiterative to Inverse Frequent Recurrence}]
	Let $T:X\rightarrow X$ be a homeomorphism of the Polish space $(X,\tau_X)$, and assume that $X$ is endowed with a Hausdorff topology $\tau_1$ which fulfills {\em(I)} for $T$, {\em(II)}, and {\em(III*)}. Then we have
	\[
	\cl{\FRec(T)}^{\tau_X} = \cl{\RRec(T)}^{\tau_X} \subset \cl{\FRec(T^{-1})}^{\tau_X} \subset \cl{\RRec(T^{-1})}^{\tau_X}.
	\]
	Moreover:
	\begin{enumerate}[{\em(a)}]
		\item If $T$ is reiteratively recurrent then $T^{-1}$ is frequently recurrent.
		
		\item If $\RRec(T)\neq\varnothing$ then $\FRec(T)\cap\FRec(T^{-1})\neq\varnothing$.
		
		\item If $X$ can be endowed with a Hausdorff topology $\tau_2$ which fulfills {\em(I)} for $T^{-1}$, {\em(II)}, and {\em(III*)}, then the above inclusions are equalities and $T$ is reiteratively (and hence frequently) recurrent if and only if so is $T^{-1}$.
	\end{enumerate}
\end{theorem}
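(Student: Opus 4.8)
The plan is to combine three ingredients: the measure-construction machine of Theorem~\ref{The:Main} (whose hypotheses are imposed on $T$ relative to $\tau_1$ only), the elementary observation recorded just before the statement that a Borel measure is $T$-invariant if and only if it is $T^{-1}$-invariant whenever $T$ is a homeomorphism, and Lemma~\ref{Lem:Sophie} applied in the Polish topology $\tau_X$, in which \emph{both} $T$ and $T^{-1}$ are continuous and $X$ is second-countable. First I would dispose of the easy endpoints of the chain: the equality $\cl{\FRec(T)}^{\tau_X}=\cl{\RRec(T)}^{\tau_X}$ is exactly Theorem~\ref{The:Polish} applied to $(X,T)$ with $\tau=\tau_1$, while $\cl{\FRec(T^{-1})}^{\tau_X}\subset\cl{\RRec(T^{-1})}^{\tau_X}$ is immediate from the general inclusion $\FRec(T^{-1})\subset\RRec(T^{-1})$.

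The heart of the argument is the middle inclusion $\cl{\RRec(T)}^{\tau_X}\subset\cl{\FRec(T^{-1})}^{\tau_X}$. I would fix $x_0\in\RRec(T)$ and apply Theorem~\ref{The:Main} to $(X,T)$ with $\tau=\tau_1$, obtaining a $T$-invariant probability measure $\mu_{x_0}$ with $x_0\in\supp(\mu_{x_0})$, the support being taken in $\tau_X$. Since $T$ is a $\tau_X$-homeomorphism, the preliminary fact shows $\mu_{x_0}$ is also $T^{-1}$-invariant; as $T^{-1}$ is $\tau_X$-continuous and $X$ is second-countable, Lemma~\ref{Lem:Sophie} applied to $T^{-1}$ gives $\supp(\mu_{x_0})\subset\cl{\FRec(T^{-1})}^{\tau_X}$, whence $x_0\in\cl{\FRec(T^{-1})}^{\tau_X}$. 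Taking $\tau_X$-closures over all $x_0\in\RRec(T)$ yields the inclusion. The point to watch throughout is the topology discipline: the measure is built from the $\tau_1$-compact sets (so that they are large enough to carry mass), but its support statement and the invocation of Lemma~\ref{Lem:Sophie} live in $\tau_X$, which is legitimate precisely because $\Bi(X,\tau_1)=\Bi(X,\tau_X)$ and $T^{\pm1}$ are $\tau_X$-continuous.

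For the consequences, part~(a) is immediate: $T$ reiteratively recurrent means $\cl{\RRec(T)}^{\tau_X}=X$, which by the displayed chain forces $\cl{\FRec(T^{-1})}^{\tau_X}=X$, i.e.\ $T^{-1}$ is frequently recurrent. For part~(b) I would reuse one measure: given $x_0\in\RRec(T)$, the measure $\mu_{x_0}$ above is simultaneously $T$- and $T^{-1}$-invariant, so Lemma~\ref{Lem:Sophie} applied to each map gives $\mu_{x_0}(\FRec(T))=\mu_{x_0}(\FRec(T^{-1}))=1$; by subadditivity $\mu_{x_0}(\FRec(T)\cap\FRec(T^{-1}))=1>0$, so the intersection is non-empty. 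For part~(c), the extra topology $\tau_2$ lets me run the entire argument verbatim with the roles of $T$ and $T^{-1}$ swapped, producing $\cl{\RRec(T^{-1})}^{\tau_X}\subset\cl{\FRec(T)}^{\tau_X}$; chaining this with the first display collapses all four closures to a common set, and the equivalence of reiterative and frequent recurrence for each map (again Theorem~\ref{The:Polish}) then delivers the stated two-sided implication.

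I do not expect a genuine obstacle here: the proof is bookkeeping built on Theorem~\ref{The:Main}, the trivial invariance fact, and Lemma~\ref{Lem:Sophie}. The only subtlety requiring care is the asymmetry flagged above, namely that the conditions (I)--(III*) are assumed for $T$ alone, so that the measure can only be produced from the $T$-side, whereas the continuity of $T^{-1}$ needed to transfer frequent recurrence via Lemma~\ref{Lem:Sophie} is supplied for free by the $\tau_X$-homeomorphism hypothesis rather than by any property of $\tau_1$.
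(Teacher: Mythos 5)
Your proposal is correct and follows essentially the same route as the paper's proof: the equality from Theorem~\ref{The:Polish}, the middle inclusion via Theorem~\ref{The:Main} plus the $T^{-1}$-invariance of the constructed measures and Lemma~\ref{Lem:Sophie}, the full-measure intersection argument for part~(b), and the symmetric application for part~(c). Your extra care about which topology carries the support and the Borel structure is sound and matches what the paper's construction implicitly guarantees.
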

\begin{proof}
	The equality is shown in Theorem \ref{The:Polish}. The first inclusion follows from Lemma~\ref{Lem:Sophie} applied to the measures constructed with Theorem \ref{The:Main} for each point of $\RRec(T)$, using the fact that they are $T^{-1}$-invariant. The second inclusion follows by definition. Moreover, if there exists $x_0 \in \RRec(T)$ and if we take the invariant probability measure $\mu_{x_0}$ on $X$ constructed with Theorem~\ref{The:Main}, then by Lemma \ref{Lem:Sophie} we have
	\[
	\mu_{x_0}(\FRec(T)) = 1 = \mu_{x_0}(\FRec(T^{-1}))
	\]
	which implies that $\FRec(T)\cap\FRec(T^{-1})\neq\varnothing$. Finally, if such a topology $\tau_2$ exists we can apply the first part of the result to $T^{-1}$ obtaining $\cl{\RRec(T^{-1})}^{\tau_X} \subset \cl{\FRec(T)}^{\tau_X}$.
\end{proof}

As a corollary of the above theorem, and using the arguments from Theorem \ref{The:Banach} we have:

\begin{corollary}
	Let $T:X\rightarrow X$ be an invertible adjoint operator on a separable dual Banach space $X$. Then we have the equalities
	\[
	\cl{\RRec(T)} = \cl{\FRec(T)} = \cl{\FRec(T^{-1})} = \cl{\RRec(T^{-1})}.
	\]
	Moreover:
	\begin{enumerate}[{\em(a)}]
		\item $T$ is reiteratively (and hence frequently) recurrent if and only if so is $T^{-1}$.
		
		\item If $\RRec(T)\setminus\{0\}\neq\varnothing$ then $[\FRec(T)\cap\FRec(T^{-1})]\setminus\{0\}\neq\varnothing$.
	\end{enumerate}
	In particular, the result holds whenever $T$ is an operator on a reflexive Banach space $X$.
\end{corollary}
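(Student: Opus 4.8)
The plan is to apply the preceding theorem (From Reiterative to Inverse Frequent Recurrence) with the norm topology $\tau_X=\tau_{\|\cdot\|}$ as the Polish topology and the weak-star topology $w^*$ playing the role of both $\tau_1$ and $\tau_2$. First I would recall, exactly as in the proof of Theorem \ref{The:Banach}, that for the adjoint operator $T$ the triple $(\tau_{\|\cdot\|},w^*,T)$ satisfies (I), (II) and (III*): $T$ is $w^*$-continuous because it is an adjoint, $w^*\subset\tau_{\|\cdot\|}$, and by Alaoglu-Bourbaki the closed balls centred at $0$ form a $\tau_{\|\cdot\|}$-neighbourhood basis of $w^*$-compact sets. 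Since $T$ is an invertible bounded operator it is in particular a homeomorphism of $(X,\tau_{\|\cdot\|})$.

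The crucial --- and only non-routine --- step is to verify hypothesis (I) for $T^{-1}$ with respect to $w^*$, i.e. that $T^{-1}$ is again $w^*$-continuous, equivalently that $T^{-1}$ is itself an adjoint operator. Write $X=Y^*$ and $T=S^*$ for some $S\in\Lc(Y)$, which is precisely what $w^*$-continuity of $T$ encodes. The point is that $S^*$ invertible forces $S$ invertible: surjectivity of $S^*$ gives that $S$ is bounded below, injectivity of $S^*$ gives that $S$ has dense range, and bounded below plus dense range makes $S$ bijective. Consequently $T^{-1}=(S^*)^{-1}=(S^{-1})^*$ is an adjoint operator and hence $w^*$-continuous, so the triple $(\tau_{\|\cdot\|},w^*,T^{-1})$ also satisfies (I), (II) and (III*). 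We may therefore take $\tau_2=w^*$ in part (c) of the preceding theorem, which immediately yields the chain $\cl{\RRec(T)}=\cl{\FRec(T)}=\cl{\FRec(T^{-1})}=\cl{\RRec(T^{-1})}$ together with statement (a).

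For statement (b) I would pick $x_0\in\RRec(T)\setminus\{0\}$ and apply Theorem \ref{The:Main} to it, obtaining a $T$-invariant probability measure $\mu_{x_0}$ with $x_0\in\supp(\mu_{x_0})$; since $x_0\neq0$ this measure is non-trivial. Because $T$ is a homeomorphism, $\mu_{x_0}$ is simultaneously $T$- and $T^{-1}$-invariant, so Lemma \ref{Lem:Sophie} applied to both operators gives $\mu_{x_0}(\FRec(T))=\mu_{x_0}(\FRec(T^{-1}))=1$, whence $\mu_{x_0}(\FRec(T)\cap\FRec(T^{-1}))=1$. Finally, as $x_0\neq0$ lies in the support, any norm-neighbourhood $U$ of $x_0$ avoiding $0$ satisfies $\mu_{x_0}(U)>0$, so $\mu_{x_0}(\{0\})<1$; a set of full measure cannot be contained in $\{0\}$, and therefore $\FRec(T)\cap\FRec(T^{-1})$ must contain a non-zero vector.

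For the reflexive case I would simply replace $w^*$ by the weak topology $w$: every bounded operator (in particular $T$ and $T^{-1}$) is $w$-continuous, (II) is clear, and (III*) follows from the weak compactness of closed balls granted by reflexivity, so the arguments above go through verbatim. I expect the only real obstacle to be the invertibility transfer $S^*\text{ invertible}\Rightarrow S\text{ invertible}$, since this is exactly what supplies the symmetric hypothesis on $T^{-1}$ demanded by part (c) of the preceding theorem; everything else is bookkeeping with the already-established machinery.
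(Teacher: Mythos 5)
Your proposal is correct and follows essentially the same route as the paper: both reduce the statement to the preceding theorem applied with $\tau_X=\tau_{\|\cdot\|}$ and $\tau_1=\tau_2=w^*$, the key point in each case being that $T=S^*$ invertible forces $S$ invertible, so that $T^{-1}=(S^{-1})^*$ is again an adjoint operator and hence $w^*$-continuous. The only difference is one of detail, not of substance: you prove the invertibility transfer (which the paper cites as a known fact) and spell out the measure-theoretic argument behind the non-zero refinement in part (b), both of which the paper leaves implicit.
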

\begin{proof}
	Let $S:Y\rightarrow Y$ be an operator on a Banach space $Y$ such that $Y^*=X$ and $S^*=T$. It is a known fact that $T$ is invertible if and only if $S$ is invertible, and in this case, $T^{-1} = (S^{-1})^*$, so $T^{-1}$ is also an adjoint operator on the separable Banach space $X$ and hence it is $w^*$-continuous. The result follows from the above theorem applied to $T:X\rightarrow X$, $(X,\|\cdot\|)$ and the topology $w^*$.
\end{proof}

With the above fact we give an alternative prove of \cite[Theorem 3.6]{BoGr2018} for adjoint operators:

\begin{theorem}\label{The:inverseRHC}
	Let $T:X\rightarrow X$ be an invertible adjoint operator on a separable dual Banach space. If $T$ is reiteratively hypercyclic (and hence frequently recurrent) then so is $T^{-1}$. In particular, the result holds whenever $T$ is an operator on a separable reflexive Banach space $X$.
\end{theorem}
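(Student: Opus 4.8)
The plan is to reduce reiterative hypercyclicity to the conjunction of two properties that behave well under inversion: plain hypercyclicity and reiterative recurrence. Indeed, by \cite[Theorem~2.1]{BoGrLoPe2020} an operator that is simultaneously hypercyclic and reiteratively recurrent is reiteratively hypercyclic, so it is enough to verify these two (weaker) properties for $T^{-1}$. To begin, since $T$ is reiteratively hypercyclic it is in particular hypercyclic, and it is also reiteratively recurrent: the orbit of a reiteratively hypercyclic vector is dense and, the upper Banach density of a return set being shift-invariant, consists of reiteratively recurrent vectors, exactly as in the proof of the product theorem for reiteratively hypercyclic adjoint operators above.

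Next I would treat the two properties of $T^{-1}$ separately. For hypercyclicity, I would use the symmetry of topological transitivity under inversion: for any non-empty open sets $U,V$ one has $T^n(U)\cap V\neq\varnothing$ if and only if $U\cap T^{-n}(V)\neq\varnothing$, so that ranging over all pairs $(U,V)$ shows $T$ to be topologically transitive exactly when $T^{-1}$ is; by the Birkhoff's Transitivity theorem, $T^{-1}$ is then hypercyclic (this recovers the hypercyclic case of \cite{BoGr2018}). For reiterative recurrence, I would invoke the preceding corollary directly: as verified there, $T^{-1}=(S^{-1})^*$ is again an invertible adjoint operator on the same separable dual space $X$, and since $T$ is reiteratively recurrent, that corollary yields that $T^{-1}$ is reiteratively (indeed frequently) recurrent. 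Combining the hypercyclicity and the reiterative recurrence of $T^{-1}$ via \cite[Theorem~2.1]{BoGrLoPe2020} then completes the argument.

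The only substantial input is the reiterative recurrence of the inverse, and this is exactly where the invariant-measure machinery does the work inside the preceding corollary: the measure produced from a reiteratively recurrent point of $T$ through Theorem~\ref{The:Main} is simultaneously $T$- and $T^{-1}$-invariant, and Lemma~\ref{Lem:Sophie} extracts frequent (hence reiterative) recurrence for $T^{-1}$ from it. Everything else is soft, so I expect the main obstacle to be purely one of bookkeeping, namely confirming that $T^{-1}$ meets the hypotheses of the preceding corollary as a weak-star continuous adjoint operator on $X$, which that corollary already takes care of.
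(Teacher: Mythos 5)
Your proposal is correct and follows essentially the same route as the paper's own proof: reiterative recurrence of $T^{-1}$ comes from the preceding corollary (whose engine is the $T$- and $T^{-1}$-invariance of the measures from Theorem~\ref{The:Main} together with Lemma~\ref{Lem:Sophie}), hypercyclicity of $T^{-1}$ comes from the symmetry of topological transitivity, and the two are combined via \cite[Theorem~2.1]{BoGrLoPe2020}. The only difference is that you spell out, via shift-invariance of the upper Banach density along the dense orbit, why $T$ itself is reiteratively recurrent, a point the paper leaves implicit.
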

\begin{proof}
	By the above theorem $T^{-1}$ is frequently recurrent and in particular reiteratively recurrent. Since hypercyclicity (or transitivity) is also preserved by taking the inverse system, \cite[Theorem 2.1]{BoGrLoPe2020} implies that $T^{-1}$ is reiteratively hypercyclic.
\end{proof}

We cannot change the assumption of {\em reiterative hypercyclicity} in the statement of Theorem~\ref{The:inverseRHC} above into the assumption of {\em $\Uc$-frequent hypercyclicity} since there are invertible $\Uc$-frequently hypercyclic operators on $\ell^p(\NN)$ ($1\leq p<\infty$) whose inverse is not $\Uc$-frequently hypercyclic (see \cite{Menet2019U-freq}). However, it would be interesting to know whether it is possible to change the assumption of {\em reiterative hypercyclicity} into that of {\em frequent hypercyclicity}: even though it is known that there are invertible frequently hypercyclic operators on $\ell^1(\NN)$ whose inverse is not frequently hypercyclic (see \cite{Menet2019freq}), one can check that these are not adjoint operators and moreover by \cite[Proposition 20]{BaRu2015} the inverse of a frequently hypercyclic operator is always $\Uc$-frequently hypercyclic.\\[-5pt]

All the counterexamples mentioned here are C-type operators, which were introduced for the first time in \cite{Menet2017} and further developed in \cite{GriMaMe2021,Menet2019U-freq,Menet2019freq}, so a possible counterexample for the frequent hypercyclicity case could arise from those operators. If, on the other hand, one wishes to prove an analogue of Theorem~\ref{The:inverseRHC} for the frequent hypercyclicity case in our dual/reflexive framework, one cannot take a similar approach since there are chaotic operators, which are in particular frequently recurrent and hypercyclic, but not $\Uc$-frequently hypercyclic (see \cite{Menet2017} and \cite{GriMaMe2021}) and hence not frequently hypercyclic.\\[-5pt]

If we now focus on uniform, $\IP^*$ and $\Del^*$-recurrence, Theorems \ref{The:Hilbert} and \ref{The:JaDeGli+URec} combined with the equality $\lspan(\Ec(T)) = \lspan(\Ec(T^{-1}))$ give us the following:

\begin{corollary}
	Let $T:H\rightarrow H$ be an invertible operator on a complex Hilbert space $H$. Then we have the equalities
	\[
	\cl{\RRec^{bo}(T)} = \cl{\lspan(\Ec(T))} = \cl{\lspan(\Ec(T^{-1}))} = \cl{\RRec^{bo}(T^{-1})}.
	\]
	In particular, $T$ is uniformly (and hence $\IP^*$ and $\Del^*$) recurrent if and only if so is $T^{-1}$.
\end{corollary}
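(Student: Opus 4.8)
The plan is to read off the entire chain of equalities directly from Theorem~\ref{The:Hilbert}, combined with the elementary eigenvector identity already recorded in this section. First I would apply Theorem~\ref{The:Hilbert} to $T$ itself, obtaining
\[
\cl{\RRec^{bo}(T)} = \cl{\lspan(\Ec(T))}.
\]
Since $H$ is not assumed separable here, the instance I actually invoke is the non-separable version provided by Remark~\ref{Rem:Non-SeparableII}, which preserves precisely these closure equalities (statements (i)--(vii) of the theorem). Because $T$ is invertible, $T^{-1}$ is again a bounded operator on $H$ (by the bounded inverse theorem), so Theorem~\ref{The:Hilbert} applies verbatim to $T^{-1}$ and yields
\[
\cl{\RRec^{bo}(T^{-1})} = \cl{\lspan(\Ec(T^{-1}))}.
\]

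Next I would supply the middle equality $\cl{\lspan(\Ec(T))} = \cl{\lspan(\Ec(T^{-1}))}$ using the observation made at the start of this section: if $Tx = \lambda x$ with $\lambda \in \TT$, then $T^{-1}x = \cl{\lambda}\,x$ with $\cl{\lambda} \in \TT$, so $\Ec(T) = \Ec(T^{-1})$ as sets and in particular their linear spans, and hence their closures, coincide. Concatenating the three equalities then gives the displayed chain
\[
\cl{\RRec^{bo}(T)} = \cl{\lspan(\Ec(T))} = \cl{\lspan(\Ec(T^{-1}))} = \cl{\RRec^{bo}(T^{-1})}.
\]

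For the final assertion I would appeal to part~(b) of Theorem~\ref{The:Hilbert}, by which, for an operator on a complex Hilbert space, uniform recurrence, $\IP^*$-recurrence, and $\Del^*$-recurrence are each equivalent to $\lspan(\Ec(T))$ being dense in $H$. Applying this to both $T$ and $T^{-1}$ and using $\lspan(\Ec(T)) = \lspan(\Ec(T^{-1}))$, the span is dense for $T$ if and only if it is dense for $T^{-1}$; hence $T$ is uniformly (and therefore $\IP^*$- and $\Del^*$-) recurrent exactly when $T^{-1}$ is.

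Since all the substantial content is already packaged in Theorem~\ref{The:Hilbert}, there is no genuine obstacle to overcome; the proof is a three-line deduction. The only two points that deserve a moment's care are invoking the non-separable form of the theorem through Remark~\ref{Rem:Non-SeparableII} rather than the separable statement, and recording explicitly that $\Ec(T) = \Ec(T^{-1})$ so that the two applications of the theorem can be glued together.
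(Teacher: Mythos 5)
Your proposal is correct and follows essentially the same route as the paper, which derives this corollary directly from Theorem~\ref{The:Hilbert} (applied to both $T$ and $T^{-1}$) together with the identity $\lspan(\Ec(T)) = \lspan(\Ec(T^{-1}))$ recorded at the start of Section~\ref{Sec:6Inverse}. Your explicit appeal to Remark~\ref{Rem:Non-SeparableII} to cover the non-separable case is a careful touch that the paper leaves implicit.
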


\begin{corollary}
	Let $T:X\rightarrow X$ be an invertible operator on a complex reflexive space $X$. If $T$ is power-bounded, then we have
	\[
	\cl{\URec(T)} = \cl{\lspan(\Ec(T))} = \cl{\lspan(\Ec(T^{-1}))} \subset \cl{\URec(T^{-1})}.
	\]
	In particular, if $T$ is uniformly recurrent then $\lspan(\Ec(T^{-1}))$ is a dense set in $X$. Moreover, if $T^{-1}$ is also power-bounded then the above inclusion is an equality and $T$ is uniformly (and hence $\IP^*$ and $\Del^*$) recurrent if and only if so is $T^{-1}$.
\end{corollary}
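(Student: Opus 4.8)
The plan is to assemble the corollary directly from the two governing facts of the power-bounded reflexive setting: Theorem~\ref{The:JaDeGli+URec} and the elementary eigenvector identity already noted at the start of this section. First I would invoke Theorem~\ref{The:JaDeGli+URec} for $T$ itself: since $T$ is power-bounded on the complex reflexive space $X$, it immediately yields the leftmost equality $\cl{\URec(T)} = \cl{\lspan(\Ec(T))}$, with no further work.

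For the middle equality I would recall that invertibility together with linearity forces $\lspan(\Ec(T)) = \lspan(\Ec(T^{-1}))$: if $Tx = \lambda x$ for some $\lambda \in \TT$, then $T^{-1}x = \cl{\lambda}\,x$ with $\cl{\lambda} \in \TT$, and symmetrically, so the two families of unimodular eigenvectors coincide, hence so do their linear spans and the closures of those spans. This gives $\cl{\lspan(\Ec(T))} = \cl{\lspan(\Ec(T^{-1}))}$.

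The final inclusion $\cl{\lspan(\Ec(T^{-1}))} \subset \cl{\URec(T^{-1})}$ needs no power-boundedness of $T^{-1}$: it is the general fact, valid for any complex linear operator $S$, that unimodular eigenvectors are uniformly recurrent, i.e. $\lspan(\Ec(S)) \subset \URec(S)$, which follows from Proposition~\ref{Pro:unimodular} and the inclusion chain $\lspan(\Ec(S)) \subset \Del^*\Rec(S) \subset \IP^*\Rec(S) \subset \URec(S)$. Applying this with $S = T^{-1}$ (a bounded complex linear operator, since $T$ is invertible) and taking closures completes the displayed chain. The ``in particular'' statement is then immediate: if $T$ is uniformly recurrent then $\URec(T)$ is dense, so $X = \cl{\URec(T)} = \cl{\lspan(\Ec(T^{-1}))}$, whence $\lspan(\Ec(T^{-1}))$ is dense.

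For the ``Moreover'' clause the extra hypothesis is exactly what upgrades the inclusion to an equality. Assuming $T^{-1}$ is also power-bounded, it too is a power-bounded operator on a complex reflexive space, so Theorem~\ref{The:JaDeGli+URec} applies to $T^{-1}$ and gives $\cl{\URec(T^{-1})} = \cl{\lspan(\Ec(T^{-1}))}$; combined with the previous display, all four closures agree. Uniform recurrence of $T$ means $\cl{\URec(T)} = X$, which by the equality of closures is equivalent to $\cl{\URec(T^{-1})} = X$, i.e. uniform recurrence of $T^{-1}$; the equivalence with $\IP^*$- and $\Del^*$-recurrence for each operator is then read off from part~(b) of Theorem~\ref{The:JaDeGli+URec}. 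The only point needing care --- and the reason the main display is stated as an inclusion rather than an equality --- is that power-boundedness of $T$ does not transfer to $T^{-1}$, so Theorem~\ref{The:JaDeGli+URec} is unavailable for $T^{-1}$ until that is separately assumed; there is otherwise no genuine obstacle, the argument being a direct assembly of results already in hand.
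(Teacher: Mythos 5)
Your proposal is correct and follows essentially the same route as the paper, which obtains this corollary precisely by combining Theorem~\ref{The:JaDeGli+URec} (applied to $T$, and to $T^{-1}$ when it is also power-bounded) with the identity $\lspan(\Ec(T)) = \lspan(\Ec(T^{-1}))$ and the general inclusion $\lspan(\Ec(S)) \subset \Del^*\Rec(S) \subset \IP^*\Rec(S) \subset \URec(S)$ from Proposition~\ref{Pro:unimodular}. Your remark that the one-sided inclusion is exactly due to the failure of power-boundedness to transfer to $T^{-1}$ matches the paper's reason for stating it as an inclusion.
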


\section{How typical is a reiteratively recurrent operator?}\label{Sec:7Typical}

Let $H$ be a complex separable Hilbert space. For any $M>0$, denote by $\Lc_M(H)$ the set of bounded operators $T \in \Lc(H)$ such that $\|T\|\leq M$. Our aim in this short section is to present a result pertaining to the typicality of reiteratively recurrent operators of $\Lc_M(H)$, with $M>1$, for one of the two (Polish) topologies SOT and SOT$^*$. The framework that we use here is presented in detail in \cite[Chapters 2 and 3]{GriMaMe2021}, so we will be rather brief in our presentation and refer the readers to the works \cite{GriMa2021}, \cite{GriMaMe2021} or \cite{GriMaMe2021b} for more on typical properties of operators on Hilbert or Banach spaces.\\[-5pt]

We recall that the {\em Strong Operator Topology} (SOT) on $\Lc(H)$ is defined as follows: any $T_0 \in \Lc(H)$ has a SOT-neighbourhood basis consisting of sets of the form
\[
U_{T_0,x_1,...,x_s,\eps} := \{ T \in \Lc(H) : \|(T-T_0)x_i\|<\eps \text{ for } i=1,...,s\},
\]
where $x_1,...,x_s \in H$ and $\eps>0$.\\[-5pt]

The {\em Strong$^*$ Operator Topology} (SOT$^*$) is the ``self-adjoint'' version of SOT: a basis of SOT$^*$-neighbourhoods of $T_0 \in \Lc(H)$ is provided by the sets of the form
\[
V_{T_0,x_1,...,x_s,\eps} := \{ T \in \Lc(H) : \|(T-T_0)x_i\|<\eps \text{ and } \|(T-T_0)^*x_i\|<\eps \text{ for } i=1,...,s\},
\]
where $x_1,...,x_s \in H$ and $\eps>0$.\\[-5pt]

It is easily shown that $(\Lc_M(H),\text{SOT})$ and $(\Lc_M(H),\text{SOT}^*)$ are Polish spaces for any $M>0$ (see \cite[Section 4.6.2]{Pedersen1989}), and hence, a property of operators $T \in \Lc_M(H)$ will be called {\em typical} if the set of operators fulfilling it is co-meager (i.e. contains a dense $G_{\delta}$ set), and {\em atypical} if its negation is typical. Following the notation used in \cite{GriMaMe2021} we can write
\begin{eqnarray}
	\HC(H) &:=& \{ T \in \Lc(H) \text{ hypercyclic} \}; \nonumber\\[5pt]
	\text{INV}(H) &:=& \{ T \in \Lc(H) \text{ admitting a non-trivial invariant measure} \}; \nonumber
\end{eqnarray}
and for each $M>1$ the set $\HC_M(H)$ is defined as $\HC(H) \cap \Lc_M(H)$. Following the spirit of this study we introduce the following notation:
\begin{eqnarray}
	\RHC(H) &:=& \{ T \in \Lc(H) \text{ reiteratively hypercyclic} \}; \nonumber\\[5pt]
	\RRec(H) &:=& \{ T \in \Lc(H) \text{ reiteratively recurrent} \}; \nonumber\\[5pt]
	\RRec^{\neq\varnothing}(H) &:=& \{ T \in \Lc(H) : \RRec(T)\setminus\{0\}\neq\varnothing \}; \nonumber
\end{eqnarray}
and as it was done previously for the set $\HC(H)$, for each $M>1$, we will denote the respective bounded versions of these sets of operators by $\RHC_M(H)$, $\RRec_M(H)$ and $\RRec_M^{\neq\varnothing}(H)$.\\[-5pt]

Let us first recall that a SOT-typical operator in $\Lc_M(H)$, for $M>1$, has any form of recurrence one can wish for: by \cite{EiMat2013} it is known that a typical $T \in \Lc_M(H)$ is unitarily similar to $MB_{\infty}$, where $B_{\infty}$ denotes the backward shift of infinite multiplicity on $\ell^2(\NN,\ell^2(\NN))$, and $MB_{\infty}$ is such that the linear span of its unimodular eigenvectors is dense in $\ell^2(\NN,\ell^2(\NN))$, see \cite[Theorem~5.2]{EiMat2013}.\\[-5pt]

With respect to the topology SOT$^*$, it is proved in \cite[Theorem 2.29]{GriMaMe2021} that for every $M>1$ the set $\HC_M(H)\setminus\textup{INV}(H)$ is co-meager in the space $(\HC_M(H),\textup{SOT}^*)$. In other words, a SOT$^*$-typical hypercyclic operator on $H$ admits no non-trivial invariant measure. Combining Theorem \ref{The:Main} of the present work with \cite[Theorem 2.29]{GriMaMe2021} we obtain:

\begin{corollary}\label{Cor:SOT*}
	For every $M>1$, the set $\RRec^{\neq\varnothing}_M(H)$ is meager in $(\Lc_M(H),\textup{SOT}^*)$. In other words, a \textup{SOT}$^*$-typical operator on $H$ has no non-zero reiteratively recurrent point.
\end{corollary}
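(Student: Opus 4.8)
The plan is to identify $\RRec_M^{\neq\varnothing}(H)$ with a subset of the operators admitting a non-trivial invariant measure, and then exploit the two typicality inputs already available for the topology $\textup{SOT}^*$. First I would note that, since $H$ is a separable complex Hilbert space and therefore reflexive, Theorem \ref{The:Main} (applied with $\tau$ the weak topology, exactly as in the proof of Theorem \ref{The:Banach}) shows that every $T$ possessing a non-zero reiteratively recurrent vector $x_0$ admits a $T$-invariant probability measure $\mu_{x_0}$ with $x_0 \in \supp(\mu_{x_0})$; since $x_0 \neq 0$ this measure is non-trivial. Hence
\[
\RRec_M^{\neq\varnothing}(H) \subseteq \textup{INV}(H) \cap \Lc_M(H),
\]
and it suffices to prove that the right-hand side is meager in $(\Lc_M(H),\textup{SOT}^*)$.

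Next I would bring in the two typicality facts. On the one hand, hypercyclicity is $\textup{SOT}^*$-typical: for $M>1$ the set $\HC_M(H)$ is a dense $G_{\delta}$ subset of $(\Lc_M(H),\textup{SOT}^*)$ (see \cite{GriMaMe2021}). On the other hand, \cite[Theorem 2.29]{GriMaMe2021} asserts that $\HC_M(H)\setminus \textup{INV}(H)$ is co-meager in $(\HC_M(H),\textup{SOT}^*)$, which is precisely to say that $\HC_M(H)\cap\textup{INV}(H)$ is meager in the \emph{subspace} $\HC_M(H)$.

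The remaining (and only delicate) point is to transfer this subspace meagerness to the ambient space, and this is what I expect to be the main obstacle. For it I would record the elementary lemma: if $Y$ is a dense $G_{\delta}$ subset of a Polish space $X$ and $A\subseteq Y$ is meager in $Y$, then $A$ is meager in $X$. The proof is short: if $E$ is nowhere dense in $Y$ then $\cl{E}^{X}$ has empty interior in $X$, because any non-empty open $V\subseteq\cl{E}^{X}$ would meet the dense set $Y$ in a non-empty open subset $V\cap Y\subseteq \cl{E}^{X}\cap Y=\cl{E}^{Y}$, contradicting that $\cl{E}^{Y}$ has empty interior in $Y$; taking a countable nowhere-dense decomposition of $A$ then gives the claim. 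Applying this with $X=\Lc_M(H)$, $Y=\HC_M(H)$ and $A=\HC_M(H)\cap\textup{INV}(H)$ yields that $\HC_M(H)\cap\textup{INV}(H)$ is meager in $\Lc_M(H)$; this is exactly the step where the typicality of hypercyclicity is needed, ensuring that $\HC_M(H)$ is a genuine dense $G_{\delta}$ and not merely co-meager.

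Finally I would assemble the pieces. Since $\HC_M(H)$ is co-meager, its complement $\Lc_M(H)\setminus\HC_M(H)$ is meager, so
\[
\textup{INV}(H)\cap\Lc_M(H)=\big(\textup{INV}(H)\cap\HC_M(H)\big)\cup\big(\textup{INV}(H)\setminus\HC_M(H)\big)
\]
is the union of a meager set and a subset of the meager set $\Lc_M(H)\setminus\HC_M(H)$, hence meager. Combined with the inclusion from the first step, this shows that $\RRec_M^{\neq\varnothing}(H)$ is meager in $(\Lc_M(H),\textup{SOT}^*)$, which is the assertion of the corollary.
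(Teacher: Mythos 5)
Your proof is correct and is essentially the paper's own argument: the paper obtains the corollary precisely by combining Theorem~\ref{The:Main} (which, applied with the weak topology as in the proof of Theorem~\ref{The:Banach}, yields $\RRec_M^{\neq\varnothing}(H)\subseteq\textup{INV}(H)\cap\Lc_M(H)$) with \cite[Theorem~2.29]{GriMaMe2021}. What you add is only the bookkeeping that the paper's terse ``combining'' leaves implicit --- namely that $\HC_M(H)$ is a dense $G_\delta$ subset of $(\Lc_M(H),\textup{SOT}^*)$ for $M>1$, so that the meagerness of $\HC_M(H)\cap\textup{INV}(H)$ inside the subspace $\HC_M(H)$ transfers to the ambient ball --- and your treatment of this step (the dense-subspace transfer lemma plus the decomposition of $\textup{INV}(H)\cap\Lc_M(H)$) is exactly what is needed and is handled correctly.
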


Moreover, since \cite[Theorem 2.1]{BoGrLoPe2020} implies that reiterative recurrence plus hypercyclicity is equivalent to reiterative hypercyclicity, we have that $\RHC(H) = \RRec(H)\cap\HC(H)$. Using Corollary \ref{Cor:SOT*} we can improve \cite[Corollary 2.36]{GriMaMe2021} in terms of reiterative recurrence:

\begin{corollary}
	For every $M>1$, the set $\RRec^{\neq\varnothing}_M(H) \cap \HC_M(H)$ is meager in the space $(\HC_M(H),\textup{SOT}^*)$. In particular, the set $\RHC_M(H)$ is meager in $(\HC_M(H),\textup{SOT}^*)$. In other words, a \textup{SOT}$^*$-typical hypercyclic operator on $H$ does not admit any non-zero reiteratively recurrent point, and, in particular, is not reiteratively hypercyclic.
\end{corollary}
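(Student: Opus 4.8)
The plan is to reduce the statement to two facts already at hand: the measure-building machine of Theorem~\ref{The:Main}, and the SOT$^*$-typicality result \cite[Theorem~2.29]{GriMaMe2021}, which asserts that $\HC_M(H)\setminus\textup{INV}(H)$ is co-meager in $(\HC_M(H),\textup{SOT}^*)$, i.e. that $\textup{INV}(H)\cap\HC_M(H)$ is meager in $(\HC_M(H),\textup{SOT}^*)$. The whole argument then rests on showing that having a non-zero reiteratively recurrent vector forces membership in $\textup{INV}(H)$.

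First I would establish the inclusion $\RRec^{\neq\varnothing}_M(H)\subseteq\textup{INV}(H)$. Given $T\in\Lc_M(H)$ admitting a vector $x_0\in\RRec(T)\setminus\{0\}$, I would apply Theorem~\ref{The:Main} to the Polish space $(H,\|\cdot\|)$ endowed with the weak topology $w$, which (since $H$ is reflexive) satisfies (I), (II) and (III*), exactly as in the proof of Theorem~\ref{The:Hilbert}. This yields a $T$-invariant probability measure $\mu_{x_0}$ with $x_0\in\supp(\mu_{x_0})\subset\cl{\orb(x_0,T)}^{w}$. Since $x_0\neq 0$ belongs to its support, $\mu_{x_0}$ cannot be the Dirac mass $\delta_0$, hence it is a non-trivial invariant measure and $T\in\textup{INV}(H)$. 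Intersecting with $\HC_M(H)$ gives $\RRec^{\neq\varnothing}_M(H)\cap\HC_M(H)\subseteq\textup{INV}(H)\cap\HC_M(H)$.

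Combining this inclusion with \cite[Theorem~2.29]{GriMaMe2021} finishes the first assertion: the right-hand set is meager in $(\HC_M(H),\textup{SOT}^*)$, and any subset of a meager set is meager, so $\RRec^{\neq\varnothing}_M(H)\cap\HC_M(H)$ is meager there as well. For the \emph{in particular} part I would use the operator-level identity $\RHC(H)=\RRec(H)\cap\HC(H)$ supplied by \cite[Theorem~2.1]{BoGrLoPe2020}: an operator $T\in\RHC_M(H)$ is hypercyclic and reiteratively recurrent, so $\RRec(T)$ is dense and in particular contains non-zero vectors, whence $T\in\RRec^{\neq\varnothing}_M(H)$. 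Thus $\RHC_M(H)\subseteq\RRec^{\neq\varnothing}_M(H)\cap\HC_M(H)$, and the meagerness of the latter forces $\RHC_M(H)$ to be meager in $(\HC_M(H),\textup{SOT}^*)$ too.

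I do not anticipate a serious obstacle, since the heavy lifting is done by Theorem~\ref{The:Main} and \cite[Theorem~2.29]{GriMaMe2021}. The only points requiring care are bookkeeping ones: verifying that the weak topology genuinely satisfies (I), (II), (III*) so that Theorem~\ref{The:Main} applies (reflexivity of $H$), and checking that the constructed measure is genuinely non-trivial, i.e. that a non-zero point in its support rules out $\mu_{x_0}=\delta_0$. Both are immediate once stated, so the corollary follows essentially by assembling the available pieces.
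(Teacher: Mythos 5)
Your proposal is correct, and it rests on exactly the same three ingredients as the paper: Theorem~\ref{The:Main} applied with the weak topology on $H$ (legitimate since $H$ is reflexive, so (I), (II), (III*) hold) to get $\RRec^{\neq\varnothing}_M(H)\subset\textup{INV}(H)$, the typicality result \cite[Theorem~2.29]{GriMaMe2021}, and the identity $\RHC(H)=\RRec(H)\cap\HC(H)$ from \cite[Theorem~2.1]{BoGrLoPe2020}. The one organizational difference is that the paper derives this corollary from its Corollary~\ref{Cor:SOT*}, which asserts meagerness of $\RRec^{\neq\varnothing}_M(H)$ in the \emph{ambient} space $(\Lc_M(H),\textup{SOT}^*)$, and then restricts to $\HC_M(H)$; passing meagerness down to the subspace topology is not automatic and implicitly uses the SOT$^*$-density of $\HC_M(H)$ in $\Lc_M(H)$ (a fact from \cite{GriMaMe2021}). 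You instead never leave the space $(\HC_M(H),\textup{SOT}^*)$: the inclusion $\RRec^{\neq\varnothing}_M(H)\cap\HC_M(H)\subset\textup{INV}(H)\cap\HC_M(H)$ together with \cite[Theorem~2.29]{GriMaMe2021} and the fact that a subset of a meager set is meager finishes the argument. This inlining of the proof of Corollary~\ref{Cor:SOT*} is slightly cleaner for the statement at hand, since it avoids the meagerness-transfer step entirely; what it does not give you is the stronger standalone conclusion of Corollary~\ref{Cor:SOT*} itself (meagerness among \emph{all} operators of $\Lc_M(H)$, not just the hypercyclic ones), which the paper also wants for its own sake.
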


\section{Open problems}\label{Sec:8Open}

In this section we gather some possibly interesting open questions and a few comments related to them. We start by Questions \ref{Q:urecIP*rec} and \ref{Q:urecEspan}, already stated in Subsection \ref{Subsec:1.3UIP*D*UEig}, which we recall here with some extra generality:

\begin{question}[\textbf{Question \ref{Q:urecEspan}}]\label{Q:urecFRECHET}
	Let $T$ be a uniformly recurrent operator on a complex Fr\'echet space $X$. Is $\lspan(\Ec(T))$ a dense set in $X$? What about the cases where $T$ is an adjoint operator on a separable dual Banach space or where $X$ is a reflexive Banach space?
\end{question}

\begin{question}[\textbf{From \cite[Question 6.3]{BoGrLoPe2020} and Question \ref{Q:urecIP*rec}}]\label{Q:Del*.IP*.unif.FRECHET}
	Does there exist an operator (possibly on a Fr\'echet space) which is uniformly recurrent but not $\Del^*$-recurrent? What about distinguishing uniform recurrence from $\IP^*$-recurrence?
\end{question}


Note that these two questions make sense in the more general context of {\em complex Fr\'echet spaces}, and in fact both questions are still unsolved for that rather general class of spaces. It is clear that, in every possible {\em complex} context, a positive answer to Question~\ref{Q:urecFRECHET} implies a negative one to Question~\ref{Q:Del*.IP*.unif.FRECHET}. Moreover, it would even imply a negative answer for the {\em real} case of Question~\ref{Q:Del*.IP*.unif.FRECHET}: given any uniformly recurrent {\em real} linear dynamical system we could consider its complexification, and by the product-arguments used for Theorem~\ref{The:productEIG} we would get unimodular eigenvectors and hence $\Del^*$-recurrence; the initial {\em real} dynamical system could possibly not contain the obtained unimodular eigenvectors, but the real and complex parts of such vectors would clearly be $\Del^*$-recurrent for the original {\em real} system. It is worth mentioning that uniform and $\IP^*$-recurrence are completely distinguished for compact dynamical systems (see the construction from \cite{FaLi1998}, its properties in \cite{ChLiWa2006} and use \cite[Theorems 1.15 and 9.12]{Furstenberg1981}), so the question is if the linearity avoids that distinction.\\[-5pt]


The technique used in the proof of Theorem \ref{The:Hilbert} (via Gaussian measures) is very different from the one used in Theorem \ref{The:JaDeGli+URec} (via the Jacobs-Deleeuw-Glicksberg theorem). Indeed we loose the contact with measures and the unimodular eigenvectors are obtained from a totally different construction (see \cite[Section 2.4]{Krengel1985}). It seems to us that a more general ``eigenvectors' constructing machine'', not restricted to the measures or power-bounded assumptions, should be developed in order to provide a better answer to Question \ref{Q:urecFRECHET}. What we know for the moment, leaving apart the power-bounded case which seems very specific, is the following:

\begin{proposition}
	Let $T \in \Lc(H)$, where $H$ is a complex separable Hilbert space. Given a $T$-invariant $w$-compact subset $K$ of $H$ for which $0 \notin K$, we have
	\[
	\cl{\lspan(\Ec(T))} \cap K \neq \varnothing \quad \text{ and in particular } \quad \Ec(T)\neq\varnothing.
	\]
\end{proposition}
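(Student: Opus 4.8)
The plan is to reduce the statement to the existence of a suitable invariant measure and then invoke Lemma~\ref{Lem:Eigenvectors}. Precisely, it suffices to produce a non-trivial $T$-invariant Borel probability measure $\mu$ on $H$ which has a finite second-order moment and whose support is contained in $K$. Granting such a $\mu$, Lemma~\ref{Lem:Eigenvectors} yields $\supp(\mu) \subset \cl{\lspan(\Ec(T))}$; since $\varnothing \neq \supp(\mu) \subset K$, this already gives $\cl{\lspan(\Ec(T))} \cap K \neq \varnothing$, and as $0 \notin K$ the intersection must contain a non-zero vector, forcing $\cl{\lspan(\Ec(T))} \neq \{0\}$ and hence $\Ec(T) \neq \varnothing$.

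To build $\mu$, I would first observe that a weakly compact set is norm-bounded, so $K \subset MB_H$ for some $M>0$; moreover $K$ is weakly compact, hence weakly closed, hence norm-closed. Because $H$ is separable, the weak topology restricted to the bounded set $K$ is compact \emph{and} metrizable, and since $T$ is weak-to-weak continuous with $T(K) \subset K$, the map $T\res_K \colon (K,w) \rightarrow (K,w)$ is a continuous self-map of a compact metric space. A standard Krylov--Bogolyubov argument (Ces\`aro-averaging the Dirac masses along an orbit and passing to a weak-$*$ cluster point) then produces a $T\res_K$-invariant Borel probability measure $\mu$ on $(K,w)$. Its finite second-order moment is immediate from $\supp(\mu) \subset K \subset MB_H$, and it is non-trivial because $\mu(\{0\}) = 0$ thanks to $0 \notin K$.

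The remaining, and most delicate, point is to regard $\mu$ as a genuine $T$-invariant Borel measure on $(H, \|\cdot\|)$. The key observation is that on the bounded separable set $K$ the weak and norm Borel $\sigma$-algebras coincide: the norm is weakly lower semicontinuous, so every norm-closed ball is weakly closed, and since a countable base of norm-open sets for $K$ is thereby weakly Borel, every norm-Borel subset of $K$ is weakly Borel. Extending $\mu$ to $H$ by $\mu(A) := \mu(A \cap K)$ thus gives a Borel probability measure concentrated on the norm-closed set $K$, so that $\supp(\mu) \subset K$; and the identity $T^{-1}(A) \cap K = (T\res_K)^{-1}(A \cap K)$, valid because $T(K) \subset K$, transfers the invariance of $\mu$ under $T\res_K$ into invariance under $T$ on all of $H$. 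With this measure in hand the reduction of the first paragraph applies and completes the proof. I expect the bookkeeping around this coincidence of Borel $\sigma$-algebras and the transfer of invariance to be the only genuinely technical step; everything else is either classical or already recorded in Lemma~\ref{Lem:Eigenvectors}.
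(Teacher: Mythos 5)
Your proof is correct and follows essentially the same route as the paper: restrict $T$ to the $w$-compact system $(K,w)$, obtain an invariant probability measure there (Krylov--Bogolyubov; the paper cites \cite[Page 62]{Furstenberg1981}), extend it to a Borel measure on $H$ via the coincidence of the weak and norm Borel $\sigma$-algebras using $\mu(A):=\mu(A\cap K)$, observe non-triviality and the finite second-order moment, and conclude by Lemma~\ref{Lem:Eigenvectors}. The details you supply (metrizability of $(K,w)$, the identity $T^{-1}(A)\cap K=(T\res_K)^{-1}(A\cap K)$ for the transfer of invariance) are exactly the bookkeeping the paper leaves implicit.
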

\begin{proof}
	We have that $T\res_K:(K,w)\rightarrow (K,w)$ is a $w$-compact dynamical system, so it admits a $T\res_K$-invariant probability measure $\mu$ on $K$ (see \cite[Page 62]{Furstenberg1981}). Since the norm topology and the weak topology on $H$ have the same Borel sets, we can extend the measure $\mu$ into a Borel probability measure on the whole space $H$ (still denoted by $\mu$) using the formula
	\[
	\mu(A) := \mu(K\cap A) \quad \text{ for every Borel set } A \in \Bi(H).
	\]
	Note that $\mu$ is $T$-invariant.
	We deduce that:
	\begin{enumerate}[(a)]
		\item $\mu$ is non-trivial, since $0 \notin K$;
		
		\item $\mu$ has a finite second-order moment, since $\supp(\mu) \subset K$.
	\end{enumerate}
	Lemma \ref{Lem:Eigenvectors} implies that $\cl{\lspan(\Ec(T))}\cap K\neq\varnothing$ and in particular $\Ec(T)\neq\varnothing$.
\end{proof}

\begin{remark}
	Let $H$ be a complex separable Hilbert space. Since the set $\cl{\orb(x,T)}^w$ is a $T$-invariant $w$-compact subset of $H$ for any point $x \in H$ with bounded $T$-orbit, the arguments of the above proposition imply that {\em for any $M>1$, a \textup{SOT}$^*$-typical operator $T \in \Lc_M(H)$ has the property that every bounded orbit of $T$ contains $0$ in its weak closure}.
\end{remark}

\begin{proposition}
	Let $T$ be an adjoint operator on a complex separable dual Banach space $X$. Let $n \in \NN$ and $\lambda \in \TT$. Given a $[\lambda T]^n$-invariant $w^*$-compact and convex subset $K$ of $H$ for which $0 \notin K$, we have
	\[
	\Ec(T) \cap \lspan(\orb(x,T)) \neq \varnothing \quad \text{ for some } x \in K,
	\]
	and in particular $\Ec(T)\neq\varnothing$.
\end{proposition}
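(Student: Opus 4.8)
The plan is to produce the desired eigenvector by a fixed-point argument for the map $S := (\lambda T)^n$, followed by an elementary finite-dimensional reduction. Since $T$ is an adjoint operator it is $w^*$-continuous, hence so is $S = \lambda^n T^n$, and $S$ is affine because it is linear. By hypothesis $S(K) \subset K$, and $K$ is a (nonempty) $w^*$-compact convex subset of the locally convex space $(X,w^*)$. First I would invoke the Schauder--Tychonoff fixed-point theorem (a single $w^*$-continuous affine self-map of a nonempty $w^*$-compact convex set has a fixed point; the Markov--Kakutani theorem would serve equally well) to obtain a point $x \in K$ with $Sx = x$. Because $0 \notin K$ we automatically have $x \neq 0$, and unwinding $Sx = x$ gives $\lambda^n T^n x = x$, that is
\[
T^n x = \bar{\lambda}^{\,n} x =: \mu x, \qquad \mu \in \TT .
\]

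Next I would pass to the finite-dimensional subspace $V := \lspan\{x, Tx, \dots, T^{n-1}x\}$. Since $T \cdot T^{n-1}x = T^n x = \mu x \in V$, the subspace $V$ is $T$-invariant and of dimension at most $n$; moreover, using that $T$ and $T^n$ commute, $T^n(T^j x) = T^j(\mu x) = \mu\, T^j x$ for each $j$, so $T^n$ restricted to $V$ equals $\mu\,\mathrm{Id}_V$. As $x \neq 0$ we have $V \neq \{0\}$, so the operator $T|_V$ on the nonzero finite-dimensional complex space $V$ has an eigenvector $v$, say $Tv = \alpha v$ with $v \neq 0$ and $\alpha \in \CC$. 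Applying $T^n|_V = \mu\,\mathrm{Id}_V$ to $v$ yields $\alpha^n v = \mu v$, whence $\alpha^n = \mu$ and therefore $|\alpha| = |\mu|^{1/n} = 1$. Thus $v$ is a unimodular eigenvector of $T$ lying in $V \subset \lspan(\orb(x,T))$, which proves $\Ec(T) \cap \lspan(\orb(x,T)) \neq \varnothing$ for this choice of $x \in K$, and in particular $\Ec(T) \neq \varnothing$.

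The argument is quite short, and I do not expect a serious obstacle; the two points to be careful about are that the relevant topology in which the fixed point is sought must be $w^*$ (where $K$ is compact and $T$ is continuous, the norm topology being useless here), so that a genuine locally convex compact-convex setting is available, and that the fixed point only furnishes an eigenvector of $T^n$, not of $T$ itself. Hence the passage to the finite-dimensional invariant subspace $V$ and the extraction of a genuine unimodular eigenvalue $\alpha$ of $T$ is the step that actually uses the hypotheses (the condition $0 \notin K$ to guarantee $x \neq 0$, and the algebraic relation $T^n|_V = \mu\,\mathrm{Id}_V$). The role of the parameters $n$ and $\lambda$ is precisely to allow the fixed point to detect unimodular eigenvalues of $T$ other than $1$.
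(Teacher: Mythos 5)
Your proof is correct, and its engine is the same as the paper's: apply a fixed-point theorem to the $w^*$-continuous linear map $(\lambda T)^n$ on the $w^*$-compact convex set $K$ to produce $x \in K\setminus\{0\}$ with $T^n x = \bar{\lambda}^{\,n} x$. The two arguments differ only in how they pass from this unimodular eigenvector of $T^n$ to one of $T$ inside $\lspan(\orb(x,T))$. The paper factors $\alpha - z^n = \prod_{i=1}^n (\alpha_i - z)$ with $\alpha = \lambda^{-n}$ and the $\alpha_i$ in $\TT$, forms the chain $y_0 = x$, $y_j = (\alpha_j - T)y_{j-1}$, and observes that since $y_0 \neq 0$ and $y_n = (\alpha - T^n)x = 0$, the last nonzero $y_k$ satisfies $Ty_k = \alpha_{k+1} y_k$, exhibiting the eigenvector explicitly as a polynomial in $T$ applied to $x$. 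You instead restrict $T$ to the finite-dimensional $T$-invariant subspace $V = \lspan\{x, Tx, \dots, T^{n-1}x\}$, note that $T^n|_V = \bar{\lambda}^{\,n}\,\mathrm{Id}_V$, and invoke the existence of an eigenvalue of $T|_V$ over $\CC$, which must be an $n$-th root of $\bar{\lambda}^{\,n}$ and hence unimodular. These are two renditions of the same elementary linear algebra, and both land the eigenvector in $\lspan(\orb(x,T))$; the paper's factorization chain is, in effect, a constructive unwinding of your finite-dimensional eigenvalue argument. One point where your write-up is actually more precise than the paper: the paper cites ``Schauder's Fixed-Point theorem,'' whereas the setting is a compact convex set in the locally convex space $(X,w^*)$, so the correct invocation is the Tychonoff (or, since the map is affine, Markov--Kakutani) version, exactly as you note.
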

\begin{proof}
	By the Schauder's Fixed-Point theorem there is $x \in K$ for which the identity $[\lambda T]^nx=x$ holds. Taking $\alpha=\lambda^{-n} \in \TT$ we get that $(\alpha-T^n)x=0$. If we split the polynomial $(\alpha-z^n) \in \CC[z]$ we have
	\[
	(\alpha-z^n) = \prod_{i=1}^n (\alpha_i-z),
	\]
	where the $\alpha_i$'s are distinct $n$-th roots of $\alpha$ in $\TT$. Considering the vectors
	\[
	y_0:=x \quad \text{ and } \quad y_j := (\alpha_j-T)y_{j-1} = \prod_{i=1}^j (\alpha_i-T)x \quad \text{ for each } 1\leq j\leq n,
	\]
	we have $y_0\neq 0$ since $0 \notin K$, but $y_n=(\alpha-T^n)x=0$. Then for some $0\leq k\leq n-1$ we have that $y_k \in \Ec(T) \cap \lspan(\orb(x,T))$. In particular $\Ec(T)\neq\varnothing$.
\end{proof}


Another natural question concerning Theorem \ref{The:Hilbert} is the relevance, in assertions (v) to (vii) of both parts (a) and (b), of the assumption that the vectors under consideration have {\em bounded} orbit. This fact is used in order to ensure that the invariant measures, which by Theorem \ref{The:Main} can be constructed from each reiteratively recurrent vector, have a finite second-order moment. To omit this boundedness assumption (or weak versions of it) seems to require new ideas. We recall here the following open problem from \cite{GriMaMe2021}:

\begin{question}[\textbf{\cite[Question 8.3]{GriMaMe2021}}]
	Does there exist an operator on a complex separable Hilbert space admitting a non-trivial invariant probability measure but no eigenvalues?
\end{question}


The following product and inverse questions also remain open:

\begin{question}\label{Q:product}
	Let $T$ be an operator on a Fr\'echet space $X$. If $T$ is reiteratively (resp. $\Uc$-frequently, frequently, uniformly) recurrent, does the $n$-fold direct sum $T\oplus\cdots\oplus T$, $n\geq 2$, have the same property?
\end{question}

\begin{question}[\textbf{From \cite[Question 2.14]{BoGrLoPe2020} and Question \ref{Q:invertible}}]
	Let $T$ be an invertible operator on a Fr\'echet space $X$. If $T$ is reiteratively (resp. $\Uc$-frequently, frequently, uniformly, $\IP^*$ or $\Del^*$) recurrent, does $T^{-1}$ have the same property?
\end{question}

It is worth mentioning that Question \ref{Q:product} is also open for usual recurrence, as defined in Subsection \ref{Subsec:1.1GB}, and it seems to be a non-trivial question. For the $\IP^*$ and $\Del^*$ cases, the fact that such families are filters (see \cite{BerDown2008}) implies a positive answer.\\[-5pt]


As mentioned in \cite{BoGrLoPe2020}, the set of periodic points $\Per(T)$ of an operator $T$ has the property that $\Per(T)$ is either equal to $X$ or is a meager set (by the Baire Category theorem, either $\Per(T)$ is of first category or else $T^n = I$ for some $n \in \NN$). The same phenomenon happens (at least when $X$ is a Banach space) with the set of uniformly recurrent vectors $\URec(T)$, since by \cite[Corollary~3.2]{BoGrLoPe2020} if $\URec(T)$ is co-meager in $X$ then $T$ is power-bounded and $\URec(T)=X$. This motivates the following question:

\begin{question}[\textbf{\cite[Question 2.9]{BoGrLoPe2020}}]\label{Q:categoryFRec}
	Let $T$ be an operator on a Fr\'echet space $X$. Do we always have that either $\FRec(T)=X$ or $\FRec(T)$ is a meager set?
\end{question}

This seems to be a non-trivial question even in the dual/reflexive setting, since the frequently recurrent points obtained in our construction form a ``big'' set with respect to a certain invariant measure, and usually this has nothing to do with the ``bigness'' from the topological point of view (i.e. in the Baire Category sense). In fact, given any chaotic operator $T:X\rightarrow X$ (i.e. hypercyclic with a dense set of periodic points), it admits an invariant probability measure $\mu$ on $X$ with full support (see \cite[Corollary 3.6]{Grosse2006}) and hence $\mu(\FRec(T))=1$ by Lemma \ref{Lem:Sophie}. However, since $T$ is hypercyclic we have that $\FRec(T)$ is a meager set, otherwise by \cite[Theorem 2.7]{BoGrLoPe2020} the set $\FHC(T)=\FRec(T)\cap\HC(T)$ would be co-meager contradicting \cite[Corollary 19]{BaRu2015}.

\noindent
\parbox[t][3cm][l]{9cm}{\small
	\noindent
	Sophie Grivaux\\
	CNRS, Universit\'e de Lille\\
	UMR 8524 - Laboratoire Paul Painlev\'e,\\
	F-59000 Lille, France\\
	E-mail: sophie.grivaux@univ-lille.fr}
\parbox[t][3cm][l]{9cm}{\small
	Antoni L\'opez-Mart\'{\i}nez\\
	Universitat Polit\`ecnica de Val\`encia\\
	Institut Universitari de Matem\`atica Pura i Aplicada,\\
	Edifici 8E, Acces F, 4a planta,	46022 Val\`encia, Spain\\
	E-mail: anlom15a@upv.es}
\end{document}